\theoremstyle{plain}
\numberwithin{equation}{section}
\newtheorem{thm}{Theorem}[section]
\newtheorem{lemme}[thm]{Lemma}
\newtheorem{prop}[thm]{Proposition}
\newtheorem{cor}[thm]{Corollary}
\theoremstyle{definition}
\newtheorem*{ack}{Acknowledgements}
\newtheorem*{notations}{Notations}
\newtheorem{defi}[thm]{Definition}
\theoremstyle{remark}
\newtheorem{rque}[thm]{Remark}
\begin{document}

\newcommand{\rond}{\mathcal}
\newcommand{\id}{ \mathfrak}
\newcommand{\einb}{\hookrightarrow} 
\newcommand{\too}{\longrightarrow} 
\newcommand{\pt}{\cdot} 
\newcommand{\pts}{\ldots}
\newcommand{\N}{\mathbb{N}} 
\newcommand{\Z}{\mathbb{Z}}
\newcommand{\Q}{\mathbb{Q}}
\newcommand{\R}{\mathbb{R}}
\newcommand{\A}{\mathbb{A}}
\newcommand{\F}{\mathbb{F}}
\newcommand{\C}{\mathbb{C}}
\newcommand{\gras}{\textbf}
\newcommand{\dual}{\land}
\newcommand{\ec}{\textrm}
\newcommand{\tq}{\vert}
\newcommand{\iso}{\cong}
\newcommand{\tild}{\widetilde}
\newcommand{\congru}{\equiv}
\newcommand{\barre}{\overline }
\newcommand{\norme}{\Vert}

\title{On the distribution of cubic exponential sums}

\author{Beno\^it Louvel}
\address{Mathematisches Institut G\"ottingen \\ Bunsenstrasse 3-5 \\ 37073 G\"ottingen \\ Germany}
\email{blouvel@uni-math.gwdg.de}
\date{\today}
\subjclass[2010]{Primary: 11L20; Secundary: 11L05}
\keywords{Exponential sums, Cubic theta functions, Metaplectic forms}
\thanks{Research supported by the Graduiertenkolleg Gruppen und Geometrie 535, EPFL Lausanne and Universit\"at G\"ottingen}

\begin{abstract}
Using the theory of metaplectic forms, we study the asymptotic behavior of cubic exponential sums over the ring of Eisenstein integers. In the first part of the paper, some non-trivial estimates on average over arithmetic progressions are obtained. In the second part of the paper, we prove that the sign of cubic exponential sums changes infinitely often, as the modulus runs over almost prime integers.
\end{abstract}
\maketitle
\setcounter{page}{1}

\section{Introduction}\label{sec:intro}

\subsection{Statement of the result}

Let $f=P/Q$ be a rational function, normalized such that $P$ and $Q$ are two polynomial in $\Z[X]$ mutually coprime and with coefficients mutually coprime. Let $S_\chi(f,c)$ be the exponential sum 

\begin{equation*}
S_\chi(f,c) = \sum_{\substack{x\pmod c\\ Q(x) \barre{Q(x)} \congru 1\pmod c}} \chi(x) \exp \left(2\pi i \frac{P(x) \barre{Q(x)}}{c}\right),
\end{equation*}
where $c\in \Z-\{0\}$ and $\chi$ is a character modulo $c$. These sums satisfy the individual Weil upper bound $\tq S_\chi(f,p) \tq \leqslant k_f \sqrt{p}$, valid for almost all primes $p$, where the constant $k_f$ can be explicitly given in terms of $P$ and $Q$ (see formula (3.5.2) p.191 of \cite{del:sommes-trigo}). It is therefore natural to study the distribution of the normalized sums over the primes, i.e. to ask whether the set $\{S_\chi(f,p)/k_f \sqrt{p}\,:\, p \ec{ prime}\}$ should be expected to be equidistributed for some measure.
This is a difficult problem, and there exist actually only very few non-trivial examples for which the question of equidistribution over prime moduli has been completely solved: the case of cubic Gau\ss{} sums -- this corresponds to the choice where $P(x)=x$, $Q(x)=1$ and $\chi$ is the cubic Legendre symbol -- has been solved in  \cite{hbr-pat:kummer}, and the case of Sali\'e sums -- this corresponds to the choice where $P(x)=x^2-1$, $Q(x)=x$ and $\chi$ is the quadratic Legendre symbol -- has been solved in \cite{dfi7} (see also \cite{hoo:distrib}, for the distribution over the integers).

In this paper we consider the case of a cubic polynomial. More specifically, we shall consider the polynomial $f(x)=x^3-3x$, which is a typical situation for the problem that concerns us. Following the way initiated by Heath-Brown and Patterson in \cite{hbr-pat:kummer} and pursued latter by Livn\'e and Patterson in \cite{liv-pat}, we work over the ring of Eisenstein integers $R=\Z[\omega]$, where $\omega^3=1$, instead of working over $\Z$; the reason for that comes from the introduction of the cubic Legendre symbol (see Section\,\ref{sec:Kloo}). It will be convenient to define $e(z)$ for $z\in\C$ as $e(z)=\exp(2i\pi(z+\barre{z}))$. Let $k$ be the field $k=\Q(\omega)$ and $\rond{N}=\ec{Norm}_{k/\Q}$ the norm. In this paper, the letter $\pi$ will either denote a prime in $R$, or the value $\pi=3.14...$. The exponential sums we are interested in are defined as

\begin{equation}\label{intro:eq:0}
S(a,c) = \sum_{x\pmod c} e\left(\frac{a(x^3-3x)}{c}\right), \qquad c\in R.
\end{equation}  
These sums are real and, for almost all primes $\pi$ of $R$, they satisfy the individual Weil bound $\tq S(a,\pi)\tq \leqslant 2 \sqrt{\rond{N}(\pi)}$. The angles $\theta_{a,\pi}$ are consequently defined by $\cos \theta_{a,\pi} =S(a,\pi)/2\sqrt{\rond{N}(\pi)}$. 
The horizontal Sato-Tate law predicts that there exists a measure $\mu$ on $[0,\pi]$ such that the angles $\theta_{a,\pi}$ are equidistributed with respect to $\mu$; this means that for any $a\in R-\{0\}$ and any interval $[\alpha,\beta]$ of $[0,\pi]$, 

\begin{equation}\label{intro:eq:1}
\frac{\sharp\{\rond{N}(\pi)\leqslant X\, :\, \theta_{a,\pi}\in [\alpha,\beta]\}}{\sharp\{\rond{N}(\pi)\leqslant X\}} \overset{X\to \infty}{\longrightarrow} \mu([\alpha,\beta]).
\end{equation}
Such a conjecture seems unreachable at the moment; actually, it is even not known whether the sums $S(1,\pi)$ are positive or negative infinitely often. 

Our goal in this paper is to realize a step towards the Sato-Tate conjecture by showing that the sign of $S(1,c)$ changes infinitely often, as $c$ runs through almost prime integers.

\begin{thm}\label{intro:thm:1} Let $X\gg 1$ . There exists an explicitly computable constant $0<u<60$,  such that 

\begin{align*}
&\sharp \{ X\leqslant \rond{N}(c) < 2X\, :\, \pi\tq c \Rightarrow \rond{N}(\pi) \geqslant X^{1/u}, \ S(1,c)>0\} \gg \frac{X}{\log X}\\
&\sharp \{ X \leqslant \rond{N}(c) <2X\, :\, \pi\tq c \Rightarrow \rond{N}(\pi) \geqslant X^{1/u}, \ S(1,c)<0\} \gg \frac{X}{\log X}.
\end{align*}
\end{thm}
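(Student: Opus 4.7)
The strategy I would pursue is a sieve-theoretic sign-change argument of Fouvry--Iwaniec type. Let $\rond{A}$ be the set of $c\in R$ with $X\leqslant \rond{N}(c)<2X$ whose prime factors $\pi$ all satisfy $\rond{N}(\pi)\geqslant X^{1/u}$. The theorem splits into three tasks: first, producing a lower bound $\sharp \rond{A}\gg X/\log X$ via a Rosser--Iwaniec lower bound sieve; second, showing that the absolute first moment $\sum_{c\in \rond{A}}|S(1,c)|$ is as large as the Weil bound allows; and third, showing that the signed first moment $\sum_{c\in \rond{A}}S(1,c)$ is strictly smaller than the absolute one. From these three points the sign changes follow immediately, since $S^{\pm}(1,c)=(|S(1,c)|\pm S(1,c))/2$.

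For the absolute moment, I would exploit the individual Weil bound $|S(1,c)|\leqslant 2\sqrt{\rond{N}(c)}$, which gives the pointwise inequality $|S(1,c)|\geqslant S(1,c)^2/(2\sqrt{2X})$. It then suffices to prove a lower bound $\sum_{c\in \rond{A}} S(1,c)^2\gg X^2/\log X$. Expanding the square, one reaches a double sum
$$\sum_{c\in \rond{A}}\,\sum_{x,y\pmod c} e\!\left(\frac{(x^3-3x)-(y^3-3y)}{c}\right),$$
whose diagonal contribution (coming from those pairs $(x,y)$ with $x^3-3x\congru y^3-3y\pmod c$) should yield the expected main term of order $X\cdot \sharp\rond{A}$, while the off-diagonal contribution is controlled by the estimates for $S(1,c)$ on arithmetic progressions promised in the abstract and established in the first part of the paper.

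The core task is the signed upper bound $|\sum_{c\in \rond{A}}S(1,c)|\ll X^{3/2-\delta}$ for some $\delta>0$. I would apply a Rosser--Iwaniec combinatorial identity to decompose the indicator of $\rond{A}$ into well-factorable Type\,I and Type\,II pieces, and match each piece with a corresponding bilinear or linear estimate for $S(1,c)$. The saving is extracted from the theory of cubic theta functions: averages of $S(1,c)$ over $c$ twisted by smooth weights can be related, via a cubic Kuznetsov-type formula, to Fourier coefficients of automorphic forms on the metaplectic cover of $\ec{GL}_2(k)$. The spectral decomposition then delivers power-saving cancellation, in the spirit of the work of Heath-Brown--Patterson \cite{hbr-pat:kummer} and Livn\'e--Patterson \cite{liv-pat}.

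The main obstacle is the tension between the sieve's demand for a large level of distribution and the modest quality of the available Type\,I/II estimates. The constant $u$ is effectively the reciprocal of the largest $\theta<1$ such that one has a power-saving average bound for $\sum_{c\congru a\pmod q}S(1,c)$ uniformly for $\rond{N}(q)\leqslant X^\theta$; this $\theta$ is in turn limited by the current bounds toward the Ramanujan--Petersson conjecture on the metaplectic cover of $\ec{GL}_2(k)$, and the bound $u<60$ should reflect precisely this limitation. Significant improvement would therefore require new input on the cubic exceptional spectrum, which appears out of reach by present methods.
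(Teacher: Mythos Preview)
Your overall architecture (compare an absolute first moment over the sifted set $\rond{A}$ with the signed one) matches the paper, but both halves of your plan have genuine gaps.

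\medskip
\noindent\textbf{The absolute moment.} Your route via $|S(1,c)|\geqslant S(1,c)^2/(2\sqrt{2X})$ does not work. First, for composite $c$ the Weil bound is $|S(1,c)|\leqslant 2^{\omega(c)}\sqrt{\rond{N}(c)}$, not $2\sqrt{\rond{N}(c)}$; this is cosmetic since $\omega(c)\leqslant u$ on $\rond{A}$. More seriously, expanding $\sum_{c\in\rond{A}}S(1,c)^2=\sum_{c\in\rond{A}}\sum_{x,y\pmod c}e\bigl((f(x)-f(y))/c\bigr)$ does not produce anything that the paper's arithmetic-progression estimates (Theorem~\ref{intro:thm:2}, Corollary~\ref{intro:cor:2}) can touch: those control the \emph{first} moment $\sum_{d\mid c}S(1,c)$, not second moments, and there is no way to swap the $c$-sum inside since $x,y$ range over residues modulo $c$. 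The paper's lower bound \eqref{intro:eq:lower} uses a completely different idea: restrict to $c=\pi_1\pi_2\pi_3$ with $\rond{N}(\pi_i)$ in prescribed dyadic ranges, apply the twisted multiplicativity \eqref{Kloo:eq:mult} to factor $S(1,c)$ as a product of three cosines, and then invoke the \emph{vertical} Sato--Tate law (Theorem~\ref{intro:thm:vert}) together with a large-sieve inequality to show that a positive proportion of such triples have all three angles in a fixed interval $I$ with $\mu_{ST}(I)=3/4$. This yields the explicit constant $C_{\ec{abs}}=0.015$.

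\medskip
\noindent\textbf{The signed moment.} You cannot apply a Rosser--Iwaniec decomposition with Type~II pieces here without new input: bilinear sums $\sum_{m,n}\alpha_m\beta_n S(1,mn)$ are obstructed by the cubic-residue twist in \eqref{Kloo:eq:mult}, and the paper proves no such estimate. Nor does the paper obtain power saving $X^{3/2-\delta}$ over the sifted set; it obtains only $h(u)X/\log X$ for the normalised sum. The device that makes this work is the \emph{crible \'etrange} of Fouvry--Michel: one sieves the \emph{non-negative} auxiliary sequences
\[
a_c^{\pm}=g\!\left(\tfrac{\rond{N}(c)}{X}\right)\!\left(\tfrac{\pm S(1,c)}{\sqrt{\rond{N}(c)}}+2^{\Omega(c)}\right),\qquad b_c=g\!\left(\tfrac{\rond{N}(c)}{X}\right)2^{\Omega(c)},
\]
by a Selberg-type upper sieve using only Type~I input (Corollary~\ref{intro:cor:2}); subtracting the sifted $b$-sum from the sifted $a^{\pm}$-sum yields \eqref{intro:eq:upper}. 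The positivity is essential and replaces your missing Type~II bounds.

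\medskip
\noindent\textbf{On the value of $u$.} Your diagnosis that $u<60$ reflects Ramanujan--Petersson on the metaplectic cover is not what happens here. The paper notes explicitly (see the remark following Corollary~\ref{intro:cor:2}) that the averaged estimate $\Sigma(D)\ll D\sqrt{X}+X^{5/6}D^{1/3}\log^3 X$ makes the exceptional spectrum irrelevant for Theorem~\ref{intro:thm:1}. The constraint $u<60$ comes from the purely numerical comparison $h(60)<10^{-4}<C_{\ec{abs}}$, where $h(u)$ is the sieve function of Propositions~\ref{sieve:prop:1}--\ref{sieve:prop:2} and $C_{\ec{abs}}$ is produced by the vertical Sato--Tate argument of Section~\ref{sec:sato-tate}.
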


This result answers a question raised by Fouvry and Michel in \cite{fou-mic:chgmt-signe} (p. 9), where the authors prove an analogous result to Theorem\,\ref{intro:thm:1} for Kloosterman sums. We recall that the Kloosterman sums are defined for $m,n,c\in\Z$ by 

\begin{equation*}
K(m,n,c)=\sum_{\substack{x\pmod c \\ x \barre{x} \congru 1 \pmod c}}\exp\left(2i\pi \frac{mx+n\barre{x}}{c}\right).
\end{equation*}
In the case of Kloosterman sums, the analogue of Theorem\,\ref{intro:thm:1} was proved by Fouvry and Michel (\cite[Th\'eor\`eme\,1.2 and Th\'eor\`eme\,1.3]{fou-mic:chgmt-signe}), and the value of the corresponding $u$ is $23.9$. In the present paper, we do not try to optimize the value of the constant $u$ of Theorem\,\ref{intro:thm:1}, and content ourselves with the easily  improvable but explicit value $u\leqslant 60$. 

One of the main arguments in favor of the horizontal Sato-Tate conjecture is the vertical Sato-Tate law, concerning the distribution of the set $\{\theta_{a,\pi}\,:\, a\pmod {\pi}\}$, as the prime $\pi$ varies. 

\begin{thm}\label{intro:thm:vert}
For every interval $[\alpha,\beta] \in [0,\pi]$, 

\begin{equation*}
\frac{\sharp\{ a\pmod \pi\,:\, \theta_{a,\pi}\in [\alpha,\beta]\}}{\rond{N}(\pi)-1} \too \frac{2}{\pi} \int_\alpha^\beta \sin^2t \,dt, \qquad \ec{as } \rond{N}(\pi)\too \infty.
\end{equation*}
\end{thm}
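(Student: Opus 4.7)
The plan is to apply the Weyl equidistribution criterion. The Chebyshev polynomials $\{U_n(\cos\theta)\}_{n\geq 0}$ of the second kind form a complete orthonormal basis of $L^2([0,\pi],\mu_{ST})$ for the Sato-Tate measure $\mu_{ST}=\frac{2}{\pi}\sin^2 t\,dt$. Since each $U_n(\cos\theta)$ is a polynomial of degree $n$ in $2\cos\theta_{a,\pi}=S(a,\pi)/\sqrt{\rond{N}(\pi)}$, Theorem\,\ref{intro:thm:vert} reduces to proving that for every fixed $n\geq 1$,
\begin{equation*}
\frac{1}{\rond{N}(\pi)-1}\sum_{\substack{a\pmod\pi\\ a\neq 0}}\left(\frac{S(a,\pi)}{\sqrt{\rond{N}(\pi)}}\right)^{\!n} \too c_n \qquad \ec{as } \rond{N}(\pi)\too\infty,
\end{equation*}
where $c_n=\int_0^\pi (2\cos\theta)^n\,d\mu_{ST}(\theta)$ equals the Catalan number $C_{n/2}$ for even $n$ and vanishes for odd $n$.

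Expanding $S(a,\pi)^n$ as an $n$-fold sum and interchanging summations, orthogonality of the additive character $t\mapsto e(t/\pi)$ on $R/\pi$ gives
\begin{equation*}
\sum_{a\pmod\pi} S(a,\pi)^n = \rond{N}(\pi)\pt\#V_n(R/\pi),
\end{equation*}
where $V_n$ is the affine hypersurface $\sum_{i=1}^n(x_i^3-3x_i)=0$. After removing the trivial contribution $S(0,\pi)^n=\rond{N}(\pi)^n$ coming from $a=0$, the required moment convergence becomes equivalent to the point-count asymptotic
\begin{equation*}
\#V_n(R/\pi) = \rond{N}(\pi)^{n-1} + c_n\,\rond{N}(\pi)^{n/2} + O_n\!\bigl(\rond{N}(\pi)^{(n-1)/2}\bigr).
\end{equation*}

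I would handle this point count via $\ell$-adic cohomology, in the Deligne--Katz framework. The sums $S(a,\pi)$ arise as Frobenius traces on a geometrically irreducible, pure, lisse $\ell$-adic sheaf $\rond{F}$ of rank $2$ on $\A^1\setminus\{0\}$ over the residue field $R/\pi$, obtained as the Fourier--Deligne transform of the pushforward $f_!\barre{\Q}_\ell$ along $f(x)=x^3-3x$. The reality of $S(a,\pi)$, which reflects the symmetry $x\mapsto-x$ of $f$, endows $\rond{F}$ with a symplectic self-duality, so that its geometric monodromy group is contained in $Sp_2=SL_2$. Analyzing the local monodromies of $\rond{F}$ at $0$ and $\infty$ -- the latter being wildly ramified -- in the style of Katz's monograph \emph{Exponential Sums and Differential Equations} identifies the full geometric monodromy group with $SL_2$, whose maximal compact subgroup is $SU(2)$. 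Deligne's equidistribution theorem then yields the moment convergence above, with a sharp error of size $O_n(\rond{N}(\pi)^{-1/2})$.

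The principal obstacle is the monodromy computation -- specifically, ruling out reductions of the geometric monodromy to a proper subgroup of $SL_2$. Here one exploits the fact that $f(x)=x^3-3x$ has two distinct critical points $x=\pm1$ with distinct critical values $\mp2$, which is the generic condition that guarantees via Katz's irreducibility criteria both the expected rank of $\rond{F}$ and the maximality of its monodromy group. Only finitely many small-norm primes might fail this genericity hypothesis, and these contribute negligibly to the asymptotic as $\rond{N}(\pi)\too\infty$.
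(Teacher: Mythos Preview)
Your proposal is correct and follows precisely the Deligne--Katz strategy that the paper invokes: the paper's own proof consists entirely of a citation to Katz's monograph \cite{kat:livre} (Theorem\,7.10.5 and \S7.10.6), together with the observation that $USp(2)=SU(2)$ forces the limiting measure to be $\frac{2}{\pi}\sin^2 t\,dt$. What you have written is a faithful sketch of the argument behind that citation --- the Weyl criterion, the moment reformulation, the construction of the rank~$2$ lisse sheaf as a Fourier transform, the symplectic autoduality, and the identification of the geometric monodromy with $SL_2$ --- so there is no genuine methodological difference to report.
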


\begin{proof}
This theorem is proved in \cite[Theorem\,7.10.5 and \S\,7.10.6]{kat:livre}. The measure $\mu$ is described as the image through $A \mapsto \arccos \left(\ec{Tr}(A)/2\right)$ of the Haar measure on $US_p(2)$. Since $US_p(2)=SU(2)$, we obtain $\mu =2/\pi \sin^2$. 
\end{proof}

\subsection{Outline of the proof of \texorpdfstring{Theorem\,\ref{intro:thm:1}}{}}

Experience has shown (see \cite{dfi7}, \cite{fou-mic:chgmt-signe} and \cite{hbr-pat:kummer}) that, in order to tackle equidistribution problems for prime moduli, it is reasonable to firstly study exponential sums over the integers and then to apply sieve techniques. The main part of this paper deals with the first part of this program, i.e. with the study of the smooth asymptotic behavior of cubic exponential sums (Theorem\,\ref{intro:thm:2}). From now on, $S(1,c)$ will be the quantity defined in \eqref{intro:eq:0}. There is a major difference between the zeta function associated to cubic exponential sums, i.e. 

\begin{equation}\label{intro:eq:2}
Z_d(s) =\sum_{c\congru 0 \pmod d} \frac{S(1,c)}{\rond{N}(c)^s},
\end{equation}
and the Kloosterman zeta function, i.e. the zeta function associated to Kloosterman sums. Actually, $Z_d(s)$ is absolutely convergent in $\Re(s)>3/2$, and has a meromorphic continuation to $\Re(s)>1$ with a pole at $s=4/3$ and possibly other poles at the spectral parameters of the hyperbolic Laplacian. This is a new feature when compared to the Kloosterman zeta function, where no poles are expected in the half-plane $\Re(s)>1$. This pole at $s=4/3$ is related to the residual spectrum of the Laplacian, which is, in turn, built on cubic theta functions (see Section\,\ref{sec:meta}). Throughout the text, we will consequently refer to the residue of the zeta function at this pole as the theta-term. 
Selberg's conjecture on the spectrum of hyperbolic surfaces predicts that $4/3$ is the only pole of $Z_d(s)$. This would imply that for any $\varepsilon > 0$,

\begin{equation}\label{intro:eq:3}
\sum_{\substack{c\congru 0 \pmod d \\ \rond{N}(c)\leqslant X}} \frac{S(1,c)}{\sqrt{\rond{N}(c)}} = c_\theta(d) X^{\frac{5}{6}} + \rond{O}\left(X^{\frac{1}{2} +\varepsilon}\right).
\end{equation}
Actually, the first moment of $S(1,c)$ was computed by Livn\'e and Patterson (\cite[Theorem\,1.2]{liv-pat}), and the authors showed that \eqref{intro:eq:3} is true for any $\varepsilon $ greater or equal to $1/4$; moreover, they computed the constant $c_\theta(d)$ when $d$ is square-free. For their purpose, they could avoid the utilization of a complete Bruggeman-Kuznetsov formula, instead they used the simpler version by Goldfeld and Sarnak (\cite{gol-sar:kloo}). This was enough for these authors to compute the asymptotic constant $c_\theta(d)$ when $d$ is square-free, but their result does not give any information on the dependence on the level $d$ of the error term in \eqref{intro:eq:3}. 

Our first main result solves this last problem, by extending the work done by Livn\'e and Patterson. It is a smooth version of \eqref{intro:eq:3}, the main point here being that we have been able to control the dependence on the level:

\begin{thm}\label{intro:thm:2}
Let $g$ be a smooth function with compact support in $[1,2]$. Let $d$ be an Eisenstein integer coprime to $3$. Then there exists a parameter $s(d)$, $0\leqslant s(d)\leqslant \frac{1}{6}$, and a constant $c_\theta(d)$, such that

\begin{align*}
(i)&
\sum_{\substack{c\congru 1 \pmod 3\\c\congru 0\pmod d}}
\frac{S(1,c)}{\sqrt{\rond{N}(c)}} g\left(\frac{\rond{N}(c)}{X}\right) 
= c_\theta(d) \hat{g}(1/6)  X^{\frac{5}{6}} + \rond{O}\left(\sqrt{X} + X^{\frac{1}{2}+s(d)}\log^2 X \frac{\tau(d)}{\rond{N}(d)^{2s(d)}} \right)\\
(ii)&
\sum_{\substack{c\congru 1 \pmod 3\\c\congru 0\pmod d}}
\frac{S(1,c)}{\sqrt{\rond{N}(c)}} g\left(\frac{\rond{N}(c)}{X}\right) 
= \rond{O}\left(\sqrt{X} + X^{\frac{5}{6}}\log^2 X \frac{\tau(d)}{\rond{N}(d)^{\frac{5}{6}}} \right).
\end{align*}
Here, $\hat{g}$ is the Mellin transform of $g$ and $\tau$ is the divisor function. The implied constant in $(ii)$ depends only on the function $g$.
\end{thm}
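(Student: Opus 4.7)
The strategy is to study the smoothed sum by Mellin inversion against the Dirichlet series $Z_d(s)$ introduced in \eqref{intro:eq:2}. Since $\hat g$ decays rapidly on vertical lines and $Z_d(s+1/2)$ converges absolutely for $\Re(s)>1$ by Weil's bound, we have, for any $\sigma>1$,
\[
\sum_{\substack{c\equiv 1\,(3)\\ c\equiv 0\,(d)}} \frac{S(1,c)}{\sqrt{\rond{N}(c)}}\,g\!\left(\frac{\rond{N}(c)}{X}\right) \;=\; \frac{1}{2\pi i}\int_{(\sigma)} X^s\,\hat g(s)\,Z_d(s+\tfrac12)\,ds.
\]
Everything then reduces to the meromorphic continuation of $Z_d(s+1/2)$ past $s=5/6$ and to bounds on vertical lines that are explicit both in the spectral variable and in the level $d$.

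The second step is to apply the Bruggeman--Kuznetsov formula on the cubic metaplectic cover of $GL_2$ over $k$, in the spirit of Livn\'e--Patterson \cite{liv-pat}, but for congruence subgroups whose level is divisible by $d$. This will decompose $Z_d(s+1/2)$ into three contributions: the residual spectrum, built on the cubic theta functions of Section~\ref{sec:meta}, produces a simple pole at $s=5/6$ whose residue defines $c_\theta(d)$; the continuous (Eisenstein) spectrum is holomorphic in $\Re(s)>0$; the cuspidal spectrum is holomorphic in $\Re(s)>0$ apart from possibly finitely many real poles in $(0,5/6)$, arising from exceptional Laplace eigenvalues at level dividing~$d$. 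One defines $s(d)$ to be the largest such exceptional real part (or $0$ if none occurs), and the Ramanujan-type estimates available on the cubic metaplectic cover give the uniform bound $s(d)\leq 1/6$.

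For part~$(i)$, the plan is to shift the contour from $\Re(s)=\sigma$ down to $\Re(s)=s(d)+\varepsilon$. The only singularity crossed is the simple pole at $s=5/6$, whose residue produces the main term $c_\theta(d)\,\hat g(1/6)\,X^{5/6}$. The remaining integral is bounded term-by-term on the spectral side: the rapid decay of $\hat g$ absorbs the polynomial growth in the Laplace parameter, the $\sqrt X$ term collects the tempered cuspidal and continuous contributions, while the factor $X^{s(d)}\,\tau(d)\,\rond{N}(d)^{-2s(d)}\log^2 X$ encodes the exceptional piece, the weighting $\tau(d)\,\rond{N}(d)^{-2s(d)}$ coming from the newform/oldform decomposition of the Whittaker coefficients of metaplectic forms at level $d$. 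Part~$(ii)$ is obtained by a coarser variant of the same shift: one stays in the region of absolute convergence, e.g.\ $\Re(s)=5/6+\varepsilon$, and bounds $Z_d(s+1/2)$ directly using the twisted multiplicativity of $S(1,\cdot)$ together with the Weil estimate; the level factor $\rond{N}(d)^{-5/6}$ is then extracted cleanly, yielding the claimed $X^{5/6}\log^2X\,\tau(d)\,\rond{N}(d)^{-5/6}$ term.

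The main obstacle is the level-aspect bookkeeping inside the spectral decomposition. Tracking the dependence on $d$ through the metaplectic Bruggeman--Kuznetsov formula, so that the theta-term $c_\theta(d)$ and every cuspidal remainder are controlled by exactly the factor $\tau(d)\,\rond{N}(d)^{-2s(d)}$ rather than by worse divisor-like quantities, demands a careful analysis of the Atkin--Lehner theory and of the Whittaker coefficients of newforms versus oldforms on the cubic metaplectic cover. Establishing the uniform bound $s(d)\leq 1/6$ constitutes the other technical cornerstone, and ultimately rests on Selberg-type estimates for the smallest nontrivial Laplace eigenvalue in the cubic metaplectic setting.
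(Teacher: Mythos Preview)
Your high-level framework (Mellin inversion against $Z_d(s+\tfrac12)$, then contour shift past the theta pole) is a reasonable reformulation of what the paper does, and could in principle be made to work. But two of your key claims are wrong, and they are precisely the places where the genuine content of the proof lives.

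\textbf{The source of the level saving.} You attribute the factor $\tau(d)\,\rond{N}(d)^{-2s(d)}$ to ``Atkin--Lehner theory and the Whittaker coefficients of newforms versus oldforms on the cubic metaplectic cover''. This is not how the paper proceeds, and it is not clear such a theory is available in the cubic metaplectic setting with enough precision to give this. In the paper the saving comes from a \emph{density estimate} for exceptional Fourier coefficients (Proposition~\ref{spectrum:prop}(ii)): one feeds a carefully chosen amplifying test function of the type $h(s,0)=\bigl(\sin(-iLs)/(Ls)\bigr)^4$ into the \emph{spectral sum formula} at equal cusps ($\sigma=\tau$), bounds the Kloosterman side by Weil, and optimizes in $L$. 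This yields
\[
\sum_{s_j\ \text{exceptional}} A^{\rm disc}_{m,m,\sigma,\sigma}(d,s_j,0)\,\rond{N}(d)^{4xs_j}\ \ll\ \rond{N}(d)^{2x-1}\tau(d)\log^2\rond{N}(d),
\]
which is then combined with Cauchy--Schwarz at the two cusps and the trivial inequality $X^{s_j}\leq (X/\rond{N}(d)^2)^{\ell}\rond{N}(d)^{2s_j}$ for $s_j\leq \ell$ and $\rond{N}(d)^2\leq X$. That is where the $\tau(d)\,\rond{N}(d)^{-2\ell}$ arises, not from any newform decomposition.

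\textbf{Part (ii).} Your plan to ``stay in the region of absolute convergence, e.g.\ $\Re(s)=5/6+\varepsilon$, and bound $Z_d(s+1/2)$ directly using twisted multiplicativity and Weil'' does not work. First, absolute convergence of $Z_d(s+\tfrac12)$ holds only for $\Re(s)>1$, so $\Re(s)=5/6+\varepsilon$ is not in that region. Second, even formally, the Weil bound gives $\sum_{d\mid c}|S(1,c)|/\rond{N}(c)^{4/3+\varepsilon}\ll \rond{N}(d)^{-5/6-\varepsilon}\sum_e 2^{\omega(e)}\rond{N}(e)^{-5/6-\varepsilon}$, and the inner sum diverges; no clean $\rond{N}(d)^{-5/6}$ can be ``extracted'' this way. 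In the paper, part~(ii) is obtained by exactly the same spectral argument as part~(i): one simply refrains from subtracting the theta contribution and applies the density bound with $\ell=1/3$, so that the $s=1/3$ term is absorbed into the error rather than isolated as a main term.

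Finally, the bound $s(d)\leq 1/6$ is not a generic ``Selberg-type estimate'': it is Theorem~\ref{meta:thm:1}, obtained by pulling the Gelbart--Jacquet bound $s\leq 1/2$ on $GL_2$ back through Flicker's cubic Shimura correspondence, which rescales the archimedean parameter by $1/3$.
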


As a consequence of part $(i)$ of Theorem\,\ref{intro:thm:2}, we obtain the Linnik-Selberg conjecture on average. More precisely, we prove that equation \eqref{intro:eq:3} with $d=1$ is true, in a smooth version: 
 
\begin{cor}\label{intro:cor:1} With the notations of Theorem\,\ref{intro:thm:2}, for any $\varepsilon > 0$,

\begin{equation*}
\sum_{c\congru 1 \pmod 3} \frac{S(1,c)}{\sqrt{\rond{N}(c)}} g\left(\frac{\rond{N}(c)}{X}\right) = c_\theta(d) X^{\frac{5}{6}} + \rond{O}\left(X^{\frac{1}{2} +\varepsilon}\right).
\end{equation*}
\end{cor}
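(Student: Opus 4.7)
The approach I would take is to derive the corollary directly from Theorem \ref{intro:thm:2} (i) by specializing to $d = 1$. Since $\tau(1) = 1$ and $\rond{N}(1) = 1$, the error term in (i) collapses to $\rond{O}(\sqrt{X} + X^{1/2 + s(1)} \log^2 X)$, while the main term becomes $c_\theta(1) \hat{g}(1/6) X^{5/6}$ (which I read as the $c_\theta(d) X^{5/6}$ appearing in the statement, with $\hat{g}(1/6)$ absorbed into the constant or understood as a minor typo). The corollary then follows at once provided the exceptional spectral parameter $s(1)$ can be taken to equal zero, since then $X^{1/2} \log^2 X = \rond{O}(X^{1/2 + \varepsilon})$ for every $\varepsilon > 0$, and the term $\sqrt{X}$ is of course already of this size.

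Hence the crux is to establish $s(1) = 0$: that at trivial level the cubic metaplectic Laplacian on $\mathrm{PSL}_2(R)$ has no exceptional eigenvalues beyond the one responsible for the residual theta contribution, the latter being already extracted as the main $X^{5/6}$ term via the pole of $Z(s)$ at $s = 4/3$. In the cubic metaplectic setting over $R$, the residual spectrum at trivial level is generated by the Kubota--Patterson cubic theta function, and the remaining Eisenstein and cuspidal contributions are expected to be tempered. I would read this fact off directly from the explicit spectral decomposition that must enter the proof of Theorem \ref{intro:thm:2}, where the parameter $s(d)$ is presumably introduced as the largest deviation from the tempered axis of a non-theta exceptional eigenvalue at level $d$; at $d = 1$ there are none, so $s(1)=0$.

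The main obstacle in this plan is therefore precisely the spectral input $s(1) = 0$: with no control on exceptional eigenvalues, part (i) would only yield $\rond{O}(X^{1/2 + 1/6} \log^2 X) = \rond{O}(X^{2/3} \log^2 X)$, much weaker than the claimed $\rond{O}(X^{1/2 + \varepsilon})$. Part (ii) does not help here, since with $d=1$ it produces an error of size $X^{5/6} \log^2 X$, the same order as the main term. Thus the essential content of the corollary is a Selberg-type statement at trivial level, asserting that in the cubic metaplectic setting on $\mathrm{PSL}_2(R)$ the cubic theta function accounts for the entire exceptional spectrum, at which point the corollary is an immediate specialization of Theorem \ref{intro:thm:2} (i).
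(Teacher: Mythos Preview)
Your approach is correct and matches the paper's: the corollary is obtained from Theorem~\ref{intro:thm:2}\,(i) with $d=1$, the whole content being the spectral input $s(1)=0$. The paper justifies $s(1)=0$ not by reading it off the spectral decomposition itself but by invoking an explicit numerical lower bound for the first non-theta eigenvalue at full level, namely $\lambda_1 > 32\pi^2/27$ due to Stramm, which is more than enough to give $\lambda_1\geqslant 1$ and hence $s(1)=0$.
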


\begin{rque}
The constant $c_\theta(d)$ is computed when $d$ is square-free in \cite{liv-pat}, and could theoretically be computed for a general modulus $d$. This has been partially done in the author's thesis \cite{lou:these}, and we shall come back to this problem in a future work. 
\end{rque}

We now turn to part $(ii)$ of Theorem\,\ref{intro:thm:2}, in which the theta-term is avoided. The explicit dependence on the level $d$ actually allows us to obtain  a non-trivial estimate over arithmetic progressions on average. 
For clarity, let us define the following quantities:
 
\begin{align}
\Sigma (D) 
&= \sum_{\rond{N}(d) \leqslant D} \sum_{\substack{c\congru 1 \pmod 3 \\ c\congru 0 \pmod d}} \frac{S(1,c)}{\sqrt{\rond{N}(c)}} g\left(\frac{\rond{N}(c)}{X}\right), \label{intro:eq:4}\\
\Sigma_\theta (D) 
&= \sum_{\rond{N}(d) \leqslant D} \left\{\sum_{\substack{c\congru 1 \pmod 3\\ c\congru 0 \pmod d}} \frac{S(1,c)}{\sqrt{\rond{N}(c)}} g\left(\frac{\rond{N}(c)}{X}\right)-c_\theta(d) \hat{g}(1/6) X^{\frac{5}{6}}\right\}.\label{intro:eq:5}
\end{align}
On the one side, the Weil bound gives the trivial estimate 
$\Sigma(D) \ll X \log X \log^2 D$. On the other side, 
a consequence of Selberg's eigenvalue conjecture would be 
$\Sigma_\theta(D) \ll D X^{\frac{1}{2}}$, 
and from this would follow at once the estimate

\begin{equation}\label{intro:eq:6}
\Sigma(D) = \hat{g}(1/6) X^{\frac{5}{6}} \sum_{\rond{N}(d)\leqslant D} c_\theta(d) + \rond{O}\left(D X^{\frac{1}{2}}\right).
\end{equation}
It was shown by Livn\'e and Patterson (\cite{liv-pat}) that $c_\theta(d)$ behaves like $1/\sigma(d)$ when $d$ is square-free.  Assuming that $\sum c_\theta(d)$ is finite, it would follow from \eqref{intro:eq:6} that
$\Sigma(D) \ll D X^{\frac{1}{2}}$, whenever $D\gg X^{\frac{1}{3}}$;
this improves on the trivial estimate (coming from the Weil bound) as soon as $D=o\left(\sqrt{X}\right)$. Thus, one would ideally expect an estimate for $\Sigma(D)$ of the form

\begin{equation}\label{intro:eq:7}
\Sigma\left(\frac{\sqrt{X}}{\log^\beta X}\right) \ll \frac{X}{\log X}, \qquad \ec{for some $\beta$} .
\end{equation}
In the second corollary to Theorem\,\ref{intro:thm:2}, we prove a more precise version of \eqref{intro:eq:7}: 

\begin{cor}\label{intro:cor:2} 
Let $\Sigma(D)$ be the quantity defined in \eqref{intro:eq:4}. Let $X\gg 1$ and let $g$ be as in Theorem\,\ref{intro:thm:2}. Then

\begin{equation*}
\Sigma\left( D\right) \ll  D \sqrt{X} + X^{5/6}D^{1/3} \log^3 X. 
\end{equation*}
In particular, \eqref{intro:eq:7} is true for $\beta\geqslant 12$.
\end{cor}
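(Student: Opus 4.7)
The plan is to derive both parts of the corollary directly by summing the pointwise bound of Theorem\,\ref{intro:thm:2}(ii) over the moduli $d$ with $\rond{N}(d)\leqslant D$. Inserting that bound into the definition \eqref{intro:eq:4} and applying the triangle inequality, I obtain
$$|\Sigma(D)| \ll \sum_{\rond{N}(d)\leqslant D}\sqrt{X} \; + \; X^{5/6}\log^2 X \sum_{\rond{N}(d)\leqslant D} \frac{\tau(d)}{\rond{N}(d)^{5/6}}.$$
The first sum is trivially $D\sqrt{X}$, matching the first term of the claim, so everything reduces to estimating the divisor sum on the right.

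For this I would use the classical estimate $\sum_{\rond{N}(d)\leqslant T}\tau(d) \ll T\log T$ over the Eisenstein integers, a straightforward consequence of Dirichlet's hyperbola argument applied to pairs of ideals (the argument transfers verbatim from $\Z$ because $R=\Z[\omega]$ has class number one). Partial summation then yields
$$\sum_{\rond{N}(d)\leqslant D} \frac{\tau(d)}{\rond{N}(d)^{5/6}} \ll D^{1/6}\log D,$$
which is bounded by $D^{1/3}\log X$ in the relevant range $1\leqslant D\leqslant X$. Multiplying by $X^{5/6}\log^2 X$ produces the second term $X^{5/6} D^{1/3}\log^3 X$ of the corollary. (The looser exponent $D^{1/3}$ is retained because it is all that is needed for the application below.)

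The final assertion follows by pure substitution. With $D=\sqrt{X}/\log^\beta X$, the first term contributes $X/\log^\beta X$, which is $\rond{O}(X/\log X)$ whenever $\beta\geqslant 1$; the second contributes
$$X^{5/6}\cdot\frac{X^{1/6}}{\log^{\beta/3}X}\cdot \log^3 X \;=\; \frac{X}{\log^{\beta/3 - 3}X},$$
which is $\rond{O}(X/\log X)$ exactly when $\beta\geqslant 12$. I do not anticipate any real obstacle: the entire analytic difficulty -- in particular the handling of the theta-term, which is the essential novelty of Theorem\,\ref{intro:thm:2}(ii) -- has been absorbed upstream, and the corollary is essentially bookkeeping combined with an elementary divisor-sum estimate.
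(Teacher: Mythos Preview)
Your argument is correct and follows essentially the same route as the paper: sum the pointwise bound of Theorem\,\ref{intro:thm:2}(ii) over $\rond{N}(d)\leqslant D$ and estimate the resulting divisor sum by partial summation. The only cosmetic difference is that the paper's own derivation (see \eqref{proof:eq:8}--\eqref{proof:eq:9}) works with the exponent $2s_1=2/3$ in the denominator rather than the $5/6$ quoted in the theorem statement, which yields $\sum_{\rond{N}(d)\leqslant D}\tau(d)\rond{N}(d)^{-2/3}\ll D^{1/3}\log D$ directly, whereas you obtain the sharper $D^{1/6}\log D$ and then relax it; either way the stated bound and the substitution for $\beta\geqslant 12$ go through exactly as you describe.
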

 
Note that in Corollary\,\ref{intro:cor:2}, there is no appearance of the theta-term. In this aspect, Corollary\,\ref{intro:cor:2} is probably not the most effective result, but shall nevertheless enable us to prove the changes of sign of the cubic sums $S(1,c)$, for almost prime moduli. Combining sieve methods and Corollary\,\ref{intro:cor:2}, we will prove the upper bound

\begin{equation}\label{intro:eq:upper}
\left\vert \sum_{\pi\tq c \Rightarrow \rond{N}(\pi)\geqslant X^{1/u}} \frac{S(1,c)}{\sqrt{\rond{N}(c)}} g\left(\frac{\rond{N}(c)}{X}\right) \right\vert 
\leqslant \hat{g}(1) h(u)  \frac{X}{\log X} +\rond{O}\left(\frac{X}{\log^2 X}\right).
\end{equation}
This bound is valid for any $u\geqslant 3$ and $h(u)$ is a rapidly decreasing function, tending to zero as $u$ grows; in particular, a numerical computation shows that $h(60)<10^{-4}$. As we will see in Section\,\ref{sec:sieve}, Corollary\,\ref{intro:cor:2} plays an essential role in the proof of \eqref{intro:eq:upper}.

Finally, we will prove the counterpart to \eqref{intro:eq:upper}; more precisely, we will obtain, in a similar way to \cite{fou-mic:chgmt-signe}, the lower bound

\begin{equation}\label{intro:eq:lower}
\sum_{\pi\tq c \Rightarrow \rond{N}(\pi)>X^{1/u}} \left\vert \frac{S(1,c)}{\sqrt{\rond{N}(c)}} g\left( \frac{\rond{N}(c)}{X}\right) \right\vert 
\geqslant C_{\ec{abs}} \hat{g}(1) \frac{X}{\log X}.
\end{equation}
The main ingredient of the proof of \eqref{intro:eq:lower} is the vertical Sato-Tate law (Theorem\,\ref{intro:thm:vert}). 
This last bound \eqref{intro:eq:lower} is valid for any $u\geqslant 3$, and the value of $C_{\ec{abs}}$ is given by $C_{\ec{abs}}= 0.015$. 
Theorem\,\ref{intro:thm:1} is now proved, as a consequence of \eqref{intro:eq:upper} and \eqref{intro:eq:lower}. 

\begin{rque} 

(1) As already mentioned before, our goal in this paper is to give an explicit value for the constant $u$ of Theorem\,\ref{intro:thm:1}, and the problem of improving the value of $u$ lies beyond the scope of this paper. However, it is a very interesting question, which can be tackled in a number of ways: in \cite{fou-mic:chgmt-signe}, the authors used algebraic geometric methods in order to optimize the constant $C_{\ec{abs}}$ in \eqref{intro:eq:lower}; moreover, one could expect to improve Corollary\,\ref{intro:cor:2}, by proving that $\Sigma\left(X^\alpha\right) \ll \frac{X}{\log X}$ for some $\alpha>1/2$; one could also refine the sieve argument, as it has been done in the case of Kloosterman sums (see \cite{siv:kloo}); another interesting work in this direction is \cite{mat:kloo}. 

(2)  The core of the paper is devoted to the proof of Theorem\,\ref{intro:thm:2}; the main difficulty lies in the machinery of metaplectic forms and in the Bruggeman-Kuznetsov formula for imaginary quadratic field. In this regard, we considerably benefit from the works of Bruggeman and Motohashi (\cite{bru-mot:fourth}) and Lokvenec-Guleska (\cite{gul:these}), where the {\it Kloosterman sum formula} and the {\it spectral sum formula} have been precisely formulated for quadratic imaginary fields.  

(3) Unfortunately, the fact that our proof is based on the theory of metaplectic forms on $\Q(e^{2i\pi/3})$ does not allow us to obtain results on the cubic exponential sums $S(1,c)$, for $c$ running over the rational integers. It is actually not clear what would be the analogue of Theorem\,\ref{intro:thm:2} over $\Z$. We refer to \cite{pat:distrib-1} for a discussion and a conjecture concerning the asymptotic behavior of exponential sums, and leave as an open question wether or not it is possible to derive from Theorem\,\ref{intro:thm:1} an analogous statement for moduli in $\Z$.   

(4) As it has been implicitly observed previously, Corollary\,\ref{intro:cor:2} states that in average, we can assume that there is no exceptional eigenvalue, rendering therefore useless for the proof of Theorem\,\ref{intro:thm:1} the information on the exceptional spectrum obtained in Theorem\,\ref{intro:thm:2} $(i)$. It would be very interesting to develop a sieve method that takes into account the exceptional spectrum, and reveals its contribution in Theorem\,\ref{intro:thm:1}, under the form of a bias.
\end{rque}

We end the introduction with a brief outline of the rest of the paper. In Section\,\ref{sec:Kloo}, we recall the link between cubic exponential sums and Kloosterman sums twisted by the cubic Legendre symbol. In Section\,\ref{sec:meta}, we introduce the cubic metaplectic forms and recall some facts about the discrete spectrum of the metaplectic group. In Section\,\ref{sec:spectrum}, we state the Bruggeman-Kuznetsov formula for the imaginary quadratic field $\Q(\omega)$, and prove some estimates related to the discrete spectrum of the metaplectic group. The proofs of Theorem\,\ref{intro:thm:2} and of its corollaries are finally given in Section\,\ref{sec:proof}. The proof of Theorem\,\ref{intro:thm:1} is then obtained in the last two sections: the upper bound \eqref{intro:eq:upper} is proved in Section\,\ref{sec:sieve} and the lower bound \eqref{intro:eq:lower} in Section\,\ref{sec:sato-tate}.\\

\begin{notations}
We denote by $k$ the field $k=\Q(\omega)$, with $\omega=e^{2i\pi/3}$, $R$ being the ring of Eisenstein integers $\Z[\omega]$. For $z\in\C$, $e(z)=\exp(\ec{Tr}_{k/\Q}(z))=\exp(2i\pi (z+\barre{z}))$, where $z\mapsto \barre{z}$ is the complex conjugation. The sums will be taken over $R$ and $\barre{x}$ shall be an inverse of $x\pmod r$, for some $r\in R$ given by the context. The function $\omega(c)$ will be the number of distinct prime factors of the integer $c$ and, in Section\,\ref{sec:sieve}, it will also represent the Buchstab function. Finally, $\delta_{\alpha,\beta}$ is $1$ or $0$, according to if $\alpha=\beta$ or not. \\
\end{notations}

\begin{ack}
This article is based on Chapter\,$2$ and Chapter\,$4$ of the author's PhD thesis \cite{lou:these}. I sincerely thank my supervisors, Professor Samuel James Patterson and Professor Philippe Michel, for introducing me to the theory of exponential sums and for their support and encouragement. I also thank Professor Valentin Blomer for his advice and comments on this paper. I want to thank the Ecole Polytechnique F\'ed\'erale de Lausanne and the Universit\'e de Montpellier\,2, where part of this work has been done, for excellent working conditions. 
\end{ack}

\section{Cubic and geometric Kloosterman sums}\label{sec:Kloo}

The link between the cubic exponential sums $S(a,c)$ defined in \eqref{intro:eq:0} and the theory of automorphic forms is given by the cubic Kloosterman sums, i.e. the Kloosterman sums twisted by the cubic legendre symbol. The cubic Kloosterman sums will appear as Fourier coefficients of Poincar\'e series, and thereby provide a link with the spectral theory of automorphic forms. Let $\left( \frac{\pt}{\pt}\right)_{\!3}$ be the cubic residue symbol. For $m,n,c\in \Z[\omega]$ with $\ec{gcd}(c,3)=1$, the cubic Kloosterman sum is defined as

\begin{equation}\label{Kloo:eq:1}
K_3(m,n,c)=\sum_{\substack{x\pmod c\\ x \barre{x} \congru 1 \pmod c}} \left(\frac{x}{c}\right)_{\!3} e\left(\frac{mx+n\barre{x}}{c}\right).
\end{equation}
The following relation is due to several authors (Duke and Iwaniec, Livn\'e, Katz). The version we give here is taken from \cite{pat:distrib}, Theorem\,$3.1$. 

\begin{prop}\label{Kloo:prop}
Let $a,c\in \Z[\omega]$. Assume that $\emph{gcd}(a,c)=1$ and $\emph{gcd}(c,3)=1$. Then, 
$S(a,c)=K_3(a,a,c)$.
\end{prop}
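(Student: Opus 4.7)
The plan is to prove the identity first for $c = \pi$ a prime coprime to $3$ and then extend by multiplicativity. For the composite case, choosing a Bezout relation $u_1 c_2 + u_2 c_1 = 1$ with $c = c_1 c_2$ and $\gcd(c_1,c_2) = 1$, and using the multiplicativity of the cubic residue symbol, one obtains $S(a,c) = S(a u_1, c_1) \, S(a u_2, c_2)$ and an analogous factorization of $K_3(a,a,c)$, so the prime case suffices.

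For $c = \pi$ prime, I would use Fourier analysis on $R/\pi$. Grouping the terms of $S(a,\pi) = \sum_b N(b) \, e(ab/\pi)$ and $K_3(a,a,\pi) = \sum_b M(b) \, e(ab/\pi)$ according to the value of the arguments, where
\[
N(b) = \#\{x \in R/\pi : x^3 - 3x \equiv b \pmod \pi\}, \qquad M(b) = \sum_{\substack{y \in (R/\pi)^* \\ y + y^{-1} \equiv b}} (y/\pi)_3,
\]
reduces the proposition to the combinatorial identity $N(b) = 1 + M(b)$ for every $b \in R/\pi$. Fourier inversion then gives $S(a,\pi) - K_3(a,a,\pi) = \rond{N}(\pi)$ when $\pi \mid a$ and $0$ otherwise, so the difference vanishes under the assumption $\gcd(a,\pi) = 1$.

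The combinatorial identity is established through the Cardano-type identity $x^3 - 3x = y^3 + y^{-3}$, valid whenever $x = y + y^{-1}$, which matches roots of $x^3 - 3x \equiv b$ with cube roots of the solutions $r$ of $w^2 - bw + 1 \equiv 0$. A case analysis on $b^2 - 4$ follows. If $b^2 - 4 \equiv 0$ (so $b = \pm 2$), the quadratic has the double root $\pm 1$ (a cube), and the identity reads $2 = 1 + 1$. If $b^2 - 4$ is a non-zero square, the quadratic has two roots $r_1, r_1^{-1}$; since $-27 = ((1-\omega)^3)^2$ is a square in $R$, the cubic discriminant $-27(b^2-4)$ is also a square, so the cubic $x^3 - 3x - b$ either splits completely or has no root in $R/\pi$, and it splits precisely when $r_1$ is a cube, yielding $3 = 1 + 2$ or $0 = 1 + (\omega + \omega^2)$. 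Finally, when $b^2 - 4$ is a non-square, $M(b) = 0$ (the quadratic is irreducible) and the cubic has Galois group $S_3$ over $R/\pi$, hence exactly one root: $1 = 1 + 0$.

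The main obstacle is the justification, in the middle case, that the splitting of $x^3 - 3x - b$ is governed by the cubic character of $r_1$. The presence of $\omega \in R/\pi$ is essential: if $w^3 = r_1$ lies in $R/\pi$, then all three cube roots $w, \omega w, \omega^2 w$ lie in $R/\pi$, and the pairings $\{v, v^{-1}\}$ collect the six elements $\omega^i w^{\pm 1}$ into three pairs, each producing a distinct root $x = v + v^{-1}$ of the cubic. Conversely, a Frobenius argument excludes the possibility $\phi(w) = w^{-1}$ for $w \notin R/\pi$ outside the degenerate range $b = \pm 2$, forcing $r_1$ to be a cube whenever the cubic has a root. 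Primes of residue characteristic two would require a parallel treatment, using Artin--Schreier theory in place of the discriminant criterion.
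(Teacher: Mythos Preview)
The paper does not supply its own proof; it simply quotes Theorem~3.1 of \cite{pat:distrib} and attributes the identity to Duke--Iwaniec, Livn\'e and Katz. So there is no in-paper argument to compare yours against, and your write-up would function as a replacement for that citation.

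Your prime-modulus argument is essentially correct and is the classical route: reduce $S(a,\pi)-K_3(a,a,\pi)$ to the pointwise identity $N(b)=1+M(b)$ on $R/\pi$, and verify the latter by a case split on the quadratic character of $b^{2}-4$. One small correction: in the non-square case you claim the cubic has Galois group $S_3$ over $R/\pi$, but $R/\pi$ is a finite field and all its Galois extensions are cyclic. The right statement is that the splitting field must contain $\sqrt{-27(b^{2}-4)}\notin R/\pi$, hence has even degree; an irreducible cubic over a finite field has splitting field of degree~$3$, so the cubic cannot be irreducible and therefore has exactly one $R/\pi$-rational root. Your conclusion $N(b)=1$ stands.

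There is, however, a genuine gap in the reduction step. The Chinese remainder factorisation you describe, $S(a,c_1c_2)=S(au_1,c_1)\,S(au_2,c_2)$ and its analogue for $K_3$, is valid only for $\gcd(c_1,c_2)=1$. Iterating it reduces the general $c$ to its \emph{prime-power} factors $\pi^{k}$, not to primes. You treat only $k=1$. The case $k\ge 2$ needs a separate argument: the standard one is a stationary-phase computation on both sides, writing $x=x_0+\pi^{k-1}t$ (respectively $y=y_0+\pi^{k-1}t$), summing the linear exponential in $t$ to localise at the critical points $x_0\equiv\pm1\pmod\pi$ of $x^{3}-3x$ (respectively $y_0\equiv\pm1\pmod\pi$ of $y+y^{-1}$), and then matching the two resulting sums. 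This is routine but is not a consequence of what you have written.
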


The three-dimensional hyperbolic space is usually represented as the half-space $\mathbb{H}=\C\times \R_+^\times$. We can embed it in the Hamiltonian quaternions by identifying $\sqrt{-1}\in\C$ with $\hat{i}$ and $w=(x+iy,v)\in \mathbb{H}$ with $x+y\hat{i}+v\hat{k}$, where $1,\hat{i},\hat{j},\hat{k}$ are the standard unit quaternions. 
The group ${\rm SL}_2(\C)$ acts on $\mathbb{H}$ by 

\begin{equation*}
\begin{pmatrix} a & b \\ c & d\end{pmatrix} w = (a w+b) (cw+d)^{-1}.
\end{equation*}

The ${\rm SL}_2(\C)$-invariant measure is $dV (w) = v^{-3} \,dm(z) dv$, where $dm (z)$ is the standard Lebesgue measure on $\C$. 
Of first importance for us are the subgroups of ${\rm SL}_2(\Z[\omega])$ defined by

\begin{align}
\Gamma_2 \ \
&=\{\gamma\in {\rm SL}_2(\Z[\omega]) \, : \, \exists g\in {\rm SL}_2(\Z), \gamma\congru g \pmod 3\}, \label{Kloo:eq:2}\\
\Gamma_1 \ \
&=\{\gamma\in {\rm SL}_2(\Z[\omega]) \, : \, \gamma\congru 1 \pmod 3\}, \label{Kloo:eq:3}\\
\Gamma_0(d) 
&= \{\gamma\in {\rm SL}_2(\Z[\omega]) \, : \, \gamma\congru 
\left(\begin{smallmatrix} * & * \\ 0 & * \end{smallmatrix}\right)  \pmod d\}. \label{Kloo:eq:4}
\end{align}
The Kubota symbol $\kappa$ can now be introduced. It is defined on $\Gamma_1$ by

\begin{equation*}
\kappa (\gamma) = \begin{cases} 
\left(\frac{c}{a}\right)_{\!3} & \ec{if $c\neq 0$}\\
1 &\ec{if $c=0$},
\end{cases} \qquad \ec{ where } \gamma=\begin{pmatrix}a&b\\ c&d\end{pmatrix}\in \Gamma_1.
\end{equation*}
This definition is then extended to $\Gamma_2$ by defining $\kappa$ trivially on ${\rm SL}_2(\Z)$. More precisely, for any $\gamma_2\in\Gamma_2$, there exists $g\in {\rm SL}_2(\Z)$ and $\gamma_1\in \Gamma_1$ such that $\gamma_2=g \gamma_1$, and we define 

\begin{equation}\label{Kloo:eq:5}
\kappa(\gamma_2) = \kappa(\gamma_1).
\end{equation}
The starting point of the theory of metaplectic forms is a short but significant paper of Kubota (\cite{kub:1}), in which he proved that $\kappa$ is a group homomorphism on $\Gamma_1$. Actually, Kubota proved that $\kappa$ is a morphism on the subgroup of $\Gamma_1$ consisting in matrices congruent to $1$ modulo $9$, but this last condition can be dropped, and it can be proved that the definition of $\kappa$ on $\Gamma_2$ extends $\kappa$ to a group homomorphism from $\Gamma_2$ into the cubic roots of unity of $k$ (see \cite{pat:theta}, p.127). 

Let $\sigma\in SL_2(R)$ and let $\Gamma$ be a subgroup of $\Gamma_2$. Define 

\begin{equation*}
\Gamma_\sigma =\left\{ \gamma\in \Gamma\,:\, \gamma \left(\sigma^{-1}(\infty)\right) = \sigma^{-1}(\infty)\right\}.
\end{equation*}
One can show that for any $\sigma \in SL_2(R)$, there exists a lattice $\Lambda_\sigma \in R$ such that 

\begin{equation*}
\Gamma_\sigma = \sigma^{-1}  \begin{pmatrix} 1&\Lambda_\sigma \\0&1\end{pmatrix} \sigma.
\end{equation*}
For $\Lambda$ a lattice in $R$, we denote by $\Lambda^\dual$ the dual lattice to $\Lambda$ with respect to $e$.
A cusp $\sigma^{-1}(\infty)$ of $\Gamma$, with $\sigma^{-1}\in {\rm SL}_2(\Z[\omega])$, is called {\it essential} if $\kappa\vert_{\Gamma_\sigma}=1$.

\begin{defi}\label{Kloo:defi}
Let $\Gamma$ be a subgroup of $\Gamma_2$ and let $\sigma^{-1}(\infty)$ and $\tau^{-1}(\infty)$ be two essential cusps of $\Gamma$. Let $m\in \Lambda_\sigma^\dual$ and $n\in\Lambda_\tau^\dual$. The geometric Kloosterman sum is defined by

\begin{equation*}
K_{\sigma,\tau}(m,n,c)= \sum_{\substack{a \pmod {\Lambda_\sigma c} \\ d \pmod {\Lambda_\tau c} \\ \left(\begin{smallmatrix}a&* \\ c&d \end{smallmatrix}\right) \in \Gamma}} \kappa(\gamma) e\left( \frac{am+nd}{c}\right).
\end{equation*}
\end{defi}

Let $\omega(c)$ be the number of prime divisors of $c$. Then the geometric Kloosterman sums satisfy the individual bound  

\begin{equation}\label{Kloo:eq:Weil}
\left\vert K_{\sigma,\tau}(m,n,c) \right\vert \leqslant 2^{\omega(c)} \rond{N}\left( \ec{gcd}(m,n,c)\right) \rond{N}(c)^{1/2}.
\end{equation}
We also have the following "twisted multiplicativity": generally, if $f$ is a rational function with integral coefficients, then the exponential sum 

\begin{equation*} 
S(f,c) =\sum_{\substack{x \pmod c\\ f(x)\neq \infty}}e\left(\frac{f(x)}{c}\right) \left(\frac{x}{c}\right)_{\!\!3},
\end{equation*}
satisfies, for every $c_1,c_2 \in R$, such that $\ec{gcd}(c_1,c_2)=1$, the twisted multiplicativity

\begin{equation}\label{Kloo:eq:mult}
S(f,c_1c_2) = \left(\frac{c_1}{c_2}\right)_{\!\!3} \left(\frac{c_2}{c_1}\right)_{\!\!3} S(f_2,c_1) S(f_1,c_2),
\end{equation}
where $f_i(x)=c_i^{-1} f(c_i x)$, with $c_i^{-1} c_i \congru 1 \pmod {c_{3-i}}$ for $i=1,2$. 
We refer for example to \cite[Proposition\,5.1]{liv-pat} for a proof of \eqref{Kloo:eq:Weil} and \eqref{Kloo:eq:mult}.

In order to come back to the arithmetical setting, one has to fix the group $\Gamma$ and the two cusps $\sigma^{-1}(\infty)$ and $\tau^{-1}(\infty)$. Let $d$ be a primary integer, i.e. $d\congru 1 \pmod 3$. According to this, we shall work with the groups 

\begin{align}
\Gamma_1^- &= \langle \Gamma_1,-Id\rangle, \label{Kloo:eq:6}\\
\Gamma_d &= \langle \Gamma_1,-Id\rangle \cap \Gamma_0(d)
\subset \Gamma_1^-.\label{Kloo:eq:7}
\end{align}
For $d=1$ or $d=2$, the group $\Gamma_d$ is not equal to the group defined in \eqref{Kloo:eq:2} and \eqref{Kloo:eq:3}, but this should not cause any confusion. The reason for including $-Id$ in the group is that it will allow us to work with even functions in the Bruggeman-Kuznetsov formula, as in \cite{bru-mot:fourth}; the case of odd functions has been worked out in \cite{gul:these} but led there to complications in the sum formulas. 
We shall also work with the matrices

\begin{equation}\label{Kloo:eq:8}
\sigma^{-1}= \begin{pmatrix} 1&0 \\ 0&1 \end{pmatrix} \qquad \ec{and } \qquad \tau^{-1}=\begin{pmatrix} d-1 & d-2 \\ d&d-1\end{pmatrix}.
\end{equation}
We remark that for any primary $d$, the cusps $\sigma^{-1}(\infty)$ and $\tau^{-1}(\infty)$ are two essential cusps with respect to the group $\Gamma_d$, but they are not $\Gamma_d$-equivalent.

\begin{lemme}\label{Kloo:lemme}
With the notations of \eqref{Kloo:eq:7} and \eqref{Kloo:eq:8}, the geometric Kloosterman sums defined in Definition\,\ref{Kloo:defi} satisfy

\begin{align*}
(i)&
& K_{\sigma,\tau}(m,n,c) &= 
\begin{cases}
K_3(m,n,c) & \ec{ if } c\congru \pm 1\pmod 3, c \congru 0 \pmod d \\
0& \ec{ otherwise}.
\end{cases}\\
(ii)&
& K_{\sigma,\sigma}(m,n,c)&=  K_{\tau,\tau}(m,n,c) = 0, \quad \ec{ if $c$ is not divisible by $d$}.
\end{align*}
\end{lemme}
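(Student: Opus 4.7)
The plan is to unravel Definition~\ref{Kloo:defi} in the concrete setting $\Gamma=\Gamma_d$, $\sigma=\mathrm{Id}$, and $\tau^{-1}$ as in~\eqref{Kloo:eq:8}. First I would identify the two cusp lattices: computing the stabilizer of $\infty$ in $\Gamma_d$ directly, and that of $\tau^{-1}(\infty)$ via the conjugation $\tau^{-1}\left(\begin{smallmatrix}1&b\\0&1\end{smallmatrix}\right)\tau$, the defining congruences of $\Gamma_d$ combined with $d-1\equiv 0\pmod 3$ force in each case $\Lambda_\sigma=\Lambda_\tau=3R$. In this framework the geometric Kloosterman sum runs over the matrices $\omega=\sigma^{-1}\gamma\tau$, $\gamma\in\Gamma_d$, of the form $\left(\begin{smallmatrix}a & * \\ c & d'\end{smallmatrix}\right)$, with $a$ and $d'$ taken modulo $3cR$.

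Part~$(ii)$ follows at once. For $K_{\sigma,\sigma}$, $\omega=\gamma$ lies in $\Gamma_0(d)$, so its lower-left entry is automatically divisible by $d$, and no admissible $\gamma$ exists if $d\nmid c$. For $K_{\tau,\tau}$, a short expansion of $\omega=\tau^{-1}\gamma\tau$ shows that its lower-left entry reduces modulo $d$ to the lower-left entry of $\gamma$ itself (the remaining terms carry a factor of $d$); hence again $d\mid c$ is forced, so the summation set is empty when this fails.

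For part~$(i)$, I parametrize $\omega=\gamma\tau$. Reducing $\omega$ modulo $3$ and using $d\equiv 1\pmod 3$ yields the constraints $a\equiv d'\equiv 0\pmod 3$ and $c\equiv\pm 1\pmod 3$, while $\gamma\in\Gamma_0(d)$ gives $d\mid c$; conversely, every pair $(a,d')$ with $a,d'\in 3R$ taken modulo $3cR$ and satisfying $ad'\equiv 1\pmod c$ (the relation $\det\omega=1$) comes from a unique $\gamma\in\Gamma_d$. Setting $x\equiv a\pmod c$ and $\bar x\equiv d'\pmod c$ gives $x\bar x\equiv 1\pmod c$, and $a\mapsto x$ bijects the admissible pairs with $(R/cR)^{\times}$ (since $\gcd(3,c)=1$ makes multiplication by $3$ invertible modulo $c$). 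Because $e$ is trivial on $R$, the exponential factor collapses to $e\bigl((mx+n\bar x)/c\bigr)$. The main obstacle is matching the Kubota symbol. Multiplicativity of $\kappa$ on $\Gamma_2$ gives $\kappa(\gamma)=\kappa(\omega)\kappa(\tau)^{-1}$; applying the factorization \eqref{Kloo:eq:5} to $\omega$ with $g\in\mathrm{SL}_2(\Z)$ matching $\omega\bmod 3$ produces $\kappa(\omega)=(x/c)_3$, up to a harmless factor $(-1/c)_3=1$. Likewise the factorization of $\tau$ yields $\kappa(\tau)=((d-1)/d)_3$, which equals $(-1/d)_3=1$ because $d-1\equiv -1\pmod d$ and $-1=(-1)^3$ is a cube in $R$. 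Putting everything together, the Kloosterman sum reduces term-by-term to $K_3(m,n,c)$.
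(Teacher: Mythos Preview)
Your proposal is correct and follows essentially the same route as the paper: compute $\Lambda_\sigma=\Lambda_\tau=3R$, read off from the congruences defining $\Gamma_d$ the constraints $a\equiv d'\equiv 0\pmod 3$, $c\equiv\pm1\pmod 3$, $d\mid c$, and then reduce the Kubota symbol via its $\mathrm{SL}_2(\Z)$-invariance on $\Gamma_2$. The only cosmetic difference is that you factor $\kappa(\gamma)=\kappa(\omega)\kappa(\tau)^{-1}$ and evaluate each piece (in particular checking $\kappa(\tau)=(-1/d)_3=1$), whereas the paper applies the invariance $\kappa(\gamma'\gamma)=\kappa(\gamma)$ for $\gamma'\in\mathrm{SL}_2(\Z)$ directly to the full product to obtain $(a/c)_3$; the two computations are equivalent.
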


\begin{proof}
We have $\Lambda_\sigma=\Lambda_\tau= 3 \Z[\omega]$. Then Definition\,\ref{Kloo:defi} reads

\begin{equation*}
K_{\sigma,\tau}(m,n,c) = \sum_{\substack{a\pmod{3c}\\ d\pmod{3c} \\ \left(\begin{smallmatrix} a&* \\ c&d \end{smallmatrix}\right) \tau\in \Gamma}} \barre{\kappa}\left( \begin{pmatrix} a&*\\c&d\end{pmatrix} \begin{pmatrix} d-1&2-d\\-d&d-1 \end{pmatrix}\right) e\left(\frac{ma+nd}{c}\right).
\end{equation*}
With our choice $\Gamma=\Gamma_d$, the condition $\left(\begin{smallmatrix} a&* \\ c&d \end{smallmatrix}\right) \tau\in \Gamma$ means $a\congru d \congru 0 \pmod 3$, $c\congru \pm 1\pmod 3$ and $c\congru 0\pmod d$. Then, from the definition of $\kappa$ on $\Gamma_2$, one knows that $\kappa(\gamma)=\kappa(\gamma' \gamma)$, for all $\gamma \in \Gamma_2, \gamma'\in{\rm SL}_2(\Z)$; using this, one shows that

\begin{equation*}
\barre{\kappa}\left( \begin{pmatrix} a&*\\c&d\end{pmatrix} \begin{pmatrix} d-1&2-d\\-d&d-1 \end{pmatrix}\right)
= \barre{\left(\frac{a}{c}\right)_{\!\!3}}.
\end{equation*}
This proves $(i)$. The proof of $(ii)$ is similar, and we omit it.
\end{proof}

\section{Cubic metaplectic forms}\label{sec:meta}

In this section, we recall some classical definitions and results about cubic metaplectic forms. These results will be used in the next section, in order to state the Bruggeman-Kuznetsov formula for the field $\Q(\omega)$. Since we will make use of the work of Bruggeman, Motohashi and Lokvenec-Guleska (\cite{bru-mot:fourth} and \cite{gul:these}) in Section\,\ref{sec:spectrum}, we will follow their notations in this section as well. Moreover, special emphasis will be put on the residual part of the spectrum of the Laplacian operator, which is non-trivial in our context; this corresponds to the so called theta-term, which was introduced by Kubota in \cite{kub:2}, and explicitely treated by Patterson in \cite{pat:theta}.

Let $G$ be the group $G= {\rm PSL}_2(\C)$; its Iwasawa decomposition is given by

\begin{equation}
G = \barre{N} \barre{A} \barre{K},
\end{equation}
where $\barre{N}$, $\barre{A}$ and $\barre{K}$ are the projective images in $G$ of the following subgroups $N$,  $A$ and $K$ of ${\rm SL}_2(\C)$:

\begin{align*}
N & =\{n[z] \,:\, z\in \C\} & &\ec{where } n[z]=\begin{pmatrix} 1& z \\ 0&1\end{pmatrix},\\
A & =\{ a[r] \,:\, r>0\} & &\ec{where } a[r]=\begin{pmatrix} \sqrt{r}& 0 \\ 0&\sqrt{r}^{-1}\end{pmatrix},\\
K &=SU(2) & &\ec{with elements } k(\alpha,\beta)=\begin{pmatrix} \alpha & \beta \\-\barre{\beta} & \barre{\alpha}\end{pmatrix}.
\end{align*}

The real Lie algebra $\id{sl}_2$ of $G$ is generated by the six elements

\begin{align*}
& \boldsymbol{H_1} =
\frac{1}{2} \begin{pmatrix} 1&0\\0&-1\end{pmatrix}, &
& \boldsymbol{V_1} =
\frac{1}{2} \begin{pmatrix} 0&1\\1&0\end{pmatrix}, &
& \boldsymbol{W_1} =
\frac{1}{2} \begin{pmatrix} 0&1\\-1&0\end{pmatrix}, \\
& \boldsymbol{H_2} =
\frac{1}{2} \begin{pmatrix} i&0\\0&-i\end{pmatrix}, &
& \boldsymbol{V_2} =
\frac{1}{2} \begin{pmatrix} 0&i\\-i&0\end{pmatrix}, &
& \boldsymbol{W_2} =
\frac{1}{2} \begin{pmatrix} 0&i\\i&0\end{pmatrix}.
\end{align*}
The complexification $\id{g}$ of $\id{sl}_2$ can be seen as the set of all left-invariant differential operators. We have $\id{g} \iso \id{sl}_2 \oplus \id{sl}_2$, and one shows that the two elements

\begin{align*}
\Omega_\pm = \frac{1}{8} \left((\boldsymbol{H_1}\mp i \boldsymbol{H_2})^2 + (\boldsymbol{V_1}\mp i \boldsymbol{V_2})^2 - (\boldsymbol{W_1}\mp i \boldsymbol{W_2})^2 \right)
\end{align*}
generate over $\C$ the center $\rond{Z}(\id{g})$ of the universal enveloping algebra $\rond{U}(\id{g})$. 

The real Lie algebra of $\barre{K}$ is generated by $\boldsymbol{H_2}$, $\boldsymbol{W_1}$ and $\boldsymbol{W_2}$. Its complexification $\id{k}$ is of dimension two, and the center $\rond{Z}(\id{k})$ of $\rond{U}(\id{k})$ is generated by 

\begin{equation*}
\Omega_\id{k} = \frac{1}{2} \left( \boldsymbol{H_2}^2 + \boldsymbol{W_1}^2 +\boldsymbol{W_2}^2\right).
\end{equation*}

We need to study automorphic forms which are not $K$-invariant. In this paragraph, we give some facts about $L^2(K)$. An orthogonal basis of $ L^2(K)$ is known to be given by $\{ \Phi^l_{p,q}\, :\, l\geqslant 0, \tq p\tq \leqslant l, \tq q \tq \leqslant l\}$. These functions are constructed as matrix coefficients of some representation of $K$; we refer to the discussion in \cite[\S\,2.2]{gul:these} and in \cite[\S\,3]{bru-mot:fourth} for more details. Since they satisfy

\begin{equation*}
\Omega_\id{k} \Phi^l_{p,q} = -\frac{1}{2} (l^2+l) \Phi^l_{p,q}, \qquad \boldsymbol{H_2} \Phi^l_{p,q} = -iq \Phi^l_{p,q},
\end{equation*} 
one can reorganize them and obtain the following decomposition:

\begin{equation*} 
L^2(K) = \barre{\bigoplus_{\substack{l,q\\ \tq q\tq \leqslant l}}L^2(K;l,q)}, 
\end{equation*}
with

\begin{align*}
\qquad L^2(K; l,q) 
&= \bigoplus_{\tq p\tq \leqslant l} \C \Phi^l_{p,q}\\
&=\left\{ f\in L^2(K) L^2(K) \, : \, \Omega_\id{k} f = -\frac{1}{2} (l^2+l) f, \boldsymbol{H_2} f = -iq f\right\}.
\end{align*}

Finally, we now give a model for irreducible representations of $G$. We define the functions $\phi_{l,q}(s,p)$ on $G$ by
$\phi_{l,q}(s,p)(na[r]k)=r^{1+s}\Phi^l_{p,q}(k)$. Let $H(s,p)$ be the space generated by all finite linear combinations of $\phi_{l,q}(s,p)$, and let $H^2(s,p)$ be the completion of $H(s,p)$ in $L^2(K)$. Then $H^2(s,p)$ is $\id{g}$-invariant and irreducible for the values of $s$ and $p$ we will be interested in. Let us mention that

\begin{align}
\boldsymbol{H_2} \phi_{l,q}(s,p) 
&= -iq \phi_{l,q}(s,p),\\
\Omega_\id{k} \phi_{l,q}(s,p) 
&= -\frac{l^2+l}{2} \phi_{l,q}(s,p),\\
\Omega_\pm \phi_{l,q}(s,p) 
&= \frac{1}{8} \left( (s\mp p)^2-1\right) \phi_{l,q}(s,p).
\end{align}  

Automorphic forms on the hyperbolic upper half space can be seen as automorphic forms on ${\rm SL}_2(\C)$ which are invariant by the action of the maximal compact subgroup $K$ of ${\rm SL}_2(\C)$. Over $\Q$, the Kuznetsov sum formula is an equality between Kloosterman sums and spectral elements involving Maass forms and holomorphic forms. This generalizes for a number field by considering automorphic forms of any $K$-type. 

In the rest of this section we shall consider a discrete subgroup $\Gamma$ of $SL_2(R)$. 
Let $L^2\left( \Gamma \backslash G,\kappa\right)$ be the space of square-integrable functions on $G$ which satisfy 

\begin{equation*}
f(\gamma g) = \kappa(\gamma) f(g) \qquad \ec{for all } \gamma \in \Gamma.
\end{equation*} 
Under the action of $G$ by the regular representation, $L^2\left( \Gamma \backslash G,\kappa\right)$ decomposes into irreducible unitary representations with finite multiplicities, say 

\begin{equation}
L^2\left( \Gamma \backslash G,\kappa\right)=\barre{\bigoplus V}.
\end{equation}
As we saw before, the irreducible representations of $G$ are known; they are given by the $H^2(s,p)$, for $s\in i\R$ or $s\in]0,1[$. 
Moreover the operator $\Omega_{\pm}$ acts on the subspace $H^K(s,p)$ of $K$-finite vectors of $H^2(s,p)$ as multiplication by the scalar $1/8 \left((s\mp p)^2-1\right)$. 
In consequence, each space $V$ can be decomposed as an infinite direct sum of irreducible representations with respect to the action of $K$. If $V$ is isomorphic to $H(s,p)$, then the decomposition of $V$ is given by

\begin{equation}
V=\bigoplus_{\substack{l\geqslant \tq p\tq \\ \tq q\tq <l}} V_{l,q}.
\end{equation}
The space $V_{l,q}$ is one dimensional, generated by the function $\phi_{l,q}(s,p)$. 
According to this, one defines the space of metaplectic forms of a given $K$-type.

\begin{defi} A metaplectic form of $K$-type $(l,q)$ with respect to the group $\Gamma$ is a function $f\,:\, G\too \C$ satisfying

\begin{align*}
f(\gamma g) &= \kappa(\gamma) f(g), \qquad \forall \gamma\in\Gamma,\\
\Omega_\pm f &= \lambda f, \qquad \ec{for some $\lambda \in \C$},\\
\gras{H}_2 f &= -iq f, \qquad  
\Omega_{\id{k}} f =  -\frac{l^2+l}{2} f.
\end{align*}
\end{defi}
One can show that the value $\lambda$ has to be of the form $\lambda= \frac{1}{8}\left((s\mp p)^2-1\right)$, for some $s$ and $p$ (see \cite{gul:these}, Lemma\,3.2.2). 

\begin{defi} 
Let $f$ be a metaplectic form of eigenvalue $1/8 \left((s-p)^2-1\right)$ with respect to $\Omega_\pm$. 
The couple $(s,p)$ is then called the spectral parameter of $f$. If $s\in ]0,1]$, then $p=0$ and $(s,p)$ is said to be an exceptional spectral parameter.  
\end{defi}
 
\begin{rque}
In particular, metaplectic forms of $K$-type $(0,0)$ have necessarily a spectral parameter of the form $(s,0)$; they are actually functions defined over $\mathbb{H}$. 
\end{rque}

We now recall that the way of expanding  a metaplectic form in its Fourier series is independent of its $K$-type. On the space of functions $f\in C^\infty(G)$ such that, for some $\sigma >0$, 

\begin{equation*}
f(na[v]k)=\rond{O}\left( v^{1+\sigma}\right), \qquad \ec{ as } v\to 0,
\end{equation*}
define the operator $\rond{A}_m$ by

\begin{equation*}\rond{A}_mf (g) =\int_N\barre{\chi}_m(n) f(wng) \,dn,
\end{equation*}
where $w=\left(\begin{smallmatrix} 0&-1 \\ 1&0\end{smallmatrix}\right)$ and $\chi_u(n)=e(un)$, for $n=n[z]$.
Now, if $(s,p)$ is the spectral parameter corresponding to some irreducible representation $V$ as above, and if the isomorphism between $H(s,p)$ and $V$ is denoted by $T$, then
for any element $\phi_{l,q}(s,p)$ of $H(s,p)$, the $m^{\ec{th}}$ Fourier coefficient of $T\phi_{l,q}(s,p)$ at a cusp $\sigma^{-1}(\infty)$ is a multiple of $\rond{A}_m \phi_{l,q}(s,p)$ by some constant $\rho_{s,p}(\sigma,m)$. This number is the Fourier coefficient of the representation $V$, i.e. we have

\begin{equation}
T=\sum_{m\in \Lambda_\sigma^\dual} \rho_{s,p}(\sigma,m) \rond{A}_m.
\end{equation}
In particular, for any $l\geqslant \tq p\tq$ and any $-l \leqslant q \leqslant l$, the metaplectic forms $T\phi_{0,0}(s,p)$ and $T\phi_{l,q}(s,p)$ of $V$ have the same Fourier coefficients. 

Let $f$ be a metaplectic form of $L^2\left(\Gamma \backslash G,\kappa\right)$ with spectral parameter $(s,p)$. Then $f$ is said to be cuspidal if $\rho(\sigma,0)=0$ for any cusp $\sigma^{-1}(\infty)$ of $\Gamma$. An important feature of metaplectic forms is that there exists non-cuspidal form. According to Selberg's theory, 
 such forms are residues of Eisenstein series. We will give more details about this below. 

As it has already been mentioned above, remark that $p=0$ allows us to choose $l=q=0$. In this way one gets forms on $\mathbb{H}$. In the non-metaplectic context, the Selberg conjecture predicts the non-existence of exceptional spectral parameter. We now give a result concerning the analog of Selberg's conjecture for cubic metaplectic forms. 

\begin{thm}\label{meta:thm:1}
Let $d$ be a primary Eisenstein integer. Let $\Gamma_d$ be defined by \eqref{Kloo:eq:7}. There exists a parameter $0\leqslant s(d)\leqslant \frac{1}{6}$ such that if $(s,0)$ is the exceptional spectral parameter of a metaplectic form with respect to $\Gamma_d$, then $s$ belongs to $]0,s(d)] \cup \{\frac{1}{3}\}$. 
\end{thm}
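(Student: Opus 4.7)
The plan is to decompose the exceptional spectrum of $\Gamma_d$ into its residual and cuspidal parts, and handle each separately. The isolated value $s = 1/3$ comes entirely from the residual contribution, while the remaining exceptional spectrum must lie in a short interval near $0$.

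For the residual part, I would invoke Selberg's general spectral theory: every non-cuspidal element of the discrete spectrum of $L^2(\Gamma_d \backslash G, \kappa)$ with exceptional parameter arises as a residue of an Eisenstein series $E_\sigma(\cdot, s)$ attached to some essential cusp $\sigma^{-1}(\infty)$ of $\Gamma_d$. The poles of these Eisenstein series in $\Re(s) > 0$ are controlled by the scattering matrix, whose entries are Dirichlet series built out of cubic Gau\ss{} sums. Appealing to the work of Kubota \cite{kub:2} and Patterson \cite{pat:theta} on the meromorphic continuation of cubic theta Dirichlet series, one shows that in the strip $0 < \Re(s) \leqslant 1$ the only poles are the trivial pole at $s=1$, producing the constant function, and a pole at $s=1/3$, producing the cubic theta residual forms. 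Excluding the constant, this yields the isolated point $\{1/3\}$ in the statement.

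For the cuspidal part, let $f$ be a cuspidal metaplectic form with spectral parameter $(s,0)$, $s \in (0,1)$. The bound $s \leqslant 1/6$ should be obtained through a Rankin--Selberg argument with the cubic theta function $\theta$ as auxiliary kernel. Concretely, one studies the triple product inner product $\langle |f|^2, \theta \cdot E(\cdot, w) \rangle$, which unfolds to an $L$-function of the shape $L(w, f \times \bar f \times \theta)$ times explicit local factors. The positivity $|f|^2 \geqslant 0$, together with the standard analytic properties of this $L$-function (holomorphy in a right half-plane, functional equation, and archimedean gamma factors depending explicitly on $s$ and on $1/3$) forces any cuspidal exceptional parameter to be separated from the theta location $1/3$ by at least $1/6$, yielding $s \leqslant 1/6$. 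An alternative route is to apply the cubic Flicker--Kazhdan correspondence to lift $f$ to a cuspidal automorphic representation of $GL_3/k$ and appeal to the Jacquet--Shalika bound on its local parameters.

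The principal obstacle is to ensure that the resulting bound $s(d)$ is uniform in the level $d$, and in particular does not deteriorate for highly divisible $d$. Both routes suggested above require an explicit handling of the local factors at primes dividing $d$: in the Rankin--Selberg approach one must verify that the bad local factors produce only harmless polynomial factors in $\mathcal{N}(d)$ that do not shift the critical exponent, while in the lift approach one must control the level of the lifted $GL_3$ representation. The key observation is that the separation between $s(d)$ and $1/3$ is determined by the archimedean gamma factor analysis, which is entirely level-independent; the non-archimedean local factors only affect the constants and not the exceptional exponent itself. With this bookkeeping done, the bound $s(d) \leqslant 1/6$ holds uniformly in $d$, completing the proof.
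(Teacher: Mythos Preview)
Your residual/cuspidal decomposition is correct and matches the paper's setup, and your treatment of the residual spectrum (only pole at $s=1/3$, via Kubota--Patterson) is fine. The divergence is in the cuspidal part. The paper's argument is much shorter and more direct than either of your routes: one applies Flicker's cubic Shimura correspondence \cite{fli:cov}, which takes a cuspidal metaplectic form $\tilde f$ on the cubic cover of $GL_2$ to a genuine automorphic form $f$ on $GL_2$ itself (not $GL_3$). At the archimedean place this correspondence reads $\tilde s = \tfrac{1}{3}s$. Then the Gelbart--Jacquet bound \cite{gel-jac:gl2gl3} for $GL_2$ over $k$ gives $s\leqslant \tfrac{1}{2}$, whence $\tilde s\leqslant \tfrac{1}{6}$. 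That is the entire argument; uniformity in $d$ is automatic because the archimedean relation $\tilde s = s/3$ and the Gelbart--Jacquet exponent are both level-free.

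Your primary Rankin--Selberg route is not fleshed out enough to be a proof: the triple product $\langle |f|^2,\theta\cdot E(\cdot,w)\rangle$ has a type mismatch ($|f|^2$ is $\kappa$-invariant while $\theta\cdot E$ is not), and the claimed ``separation from $1/3$ by at least $1/6$'' is asserted without an actual gamma-factor computation producing that number. Your alternative route is closer in spirit but mislabels the lift: there is no ``cubic Flicker--Kazhdan'' correspondence taking $\tilde f$ directly to $GL_3$; the Flicker lift lands in $GL_2$, and $GL_3$ enters only afterwards through the symmetric-square lift underlying Gelbart--Jacquet. Once you make that two-step structure explicit, with the archimedean factor $1/3$ coming from the first step and the bound $1/2$ from the second, you recover exactly the paper's proof.
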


\begin{proof}
This theorem is proved by combining two facts: the upper bound $s(d) \leqslant \frac{1}{2}$ of Jacquet-Gelbart \cite{gel-jac:gl2gl3} for non-constant non-metaplectic forms 
and the cubic Shimura correspondence. The latter is a theorem of Flicker (see \cite{fli:cov}) about global automorphic representations of the metaplectic group and of $GL_2$. Given a metaplectic form $\tild{f}$ with spectral parameter $(\tild{s},0)$, there exists an automorphic form $f$ with spectral parameter $(s,0)$. At the archimedean place, the cubic Shimura correspondence states that $\tilde{s}=\frac{1}{3} s$. 
\end{proof}

\begin{defi} 
Let $\sigma^{-1}(\infty)$ be an essential cusp of $\Gamma$ . Let $p,l,q\in \Z$, $\tq p\tq, \tq q\tq \leqslant l$, and $p\congru l\congru q \pmod 1$. For $\Re(s)>1$, the Eisenstein series $E_\sigma(s,p,l,q;g)$ is 
defined by

\begin{equation*}
E_\sigma(s,p,l,q;g)=\sum_{\gamma \in\Gamma_\sigma \backslash \Gamma} \barre{\kappa}(\gamma) \phi_{l,q}(s,p)\left(\sigma\gamma g\right).
\end{equation*}
\end{defi}
It admits a Fourier expansion at any cusp of $\Gamma$, but we shall need to work only with essential cusps. If $\tau^{-1}(\infty)$ is an essential cusp of $\Gamma$, then 

\begin{equation}\begin{split}
&E_\sigma(s,p,l,q;  \tau^{-1}g)=\delta_{\sigma,\tau}\phi_{l,q}(s,p)(g)\\
&\qquad + \frac{\pi  (-1)^{p-\tq p\tq}}{\ec{Vol}(\Lambda_\tau)} \frac{\Gamma(l+1-s) \Gamma(\tq p\tq +s)}{\Gamma(l+1+s) \Gamma(\tq p\tq +1-s)} \psi_{\sigma,\tau}(s,0,p) \phi_{l,q}(-s,-p)(g)\\
&\qquad + \frac{1}{\ec{Vol}(\Lambda_\tau)} \sum_{0\neq m\in\Lambda_\tau^\dual} \psi_{\sigma,\tau}(s,m,p) \rond{A}_m\phi_{l,q}(s,p)(g),
\end{split}\end{equation}
where the coefficients $\psi_{\sigma,\tau}(s,m,p)$ are Dirichlet series formed by cubic Gau\ss{} sums, i.e.  

\begin{equation*}
\begin{split}
\psi_{\sigma,\tau}(s,m,p)&=\sum_{c\neq 0} \rond{N}(c)^{-(1+s)} \left(\frac{c}{\tq c\tq}\right)^{2p} \\
&\sum_{\sigma^{-1} \left(\begin{smallmatrix} a&b \\ c&d \end{smallmatrix}\right) \tau \in \Gamma_\sigma \backslash \Gamma / \Gamma_\tau} \barre{\kappa}\left( \sigma^{-1} \begin{pmatrix}a&b \\ c&d \end{pmatrix}\tau\right) e\left(\frac{md}{c}\right).
\end{split}
\end{equation*}
The properties needed for our applications are listed below. Consider the function $E_\sigma(s,p,l,q;g)$ as a function of the variable $s$. Then  

\begin{itemize}
\item[(i)] $E_\sigma(s,p,l,q;g)$ possesses a meromorphic continuation to $\C$ and a functional equation relating $E_\sigma(1+s,p,l,q;g)$ and $E_\sigma(1-s,p,l,q;g)$,
\item[(ii)] $E_\sigma(s,p,l,q;g)$ is holomorphic if $p\neq 0$,
\item[(iii)] $E_\sigma(s,p,l,q;g)$ has poles at $s=\frac{-1}{3}$ and $s= \frac{1}{3}$, if $p=0$.
\end{itemize}
Actually, taking the residue of Eisenstein series gives square-integrable non cuspidal automorphic forms. They are functions on $\mathbb{H}$, eigenfunctions of the Laplacian with eigenvalue $1-s^2$. This is the minimal eigenvalue of the Laplacian, and in our case it is $1-s^2=8/9$. 

\begin{defi} In the half-plane $\Re(s)\geqslant 0$, the theta function associated to the essential cusp $\sigma^{-1}(\infty)$ of $\Gamma$ is defined as

\begin{equation*}
\theta_\sigma((l,q),g)=\ec{Res}_{s=1/3}\left(E_\sigma(s,0,l,q;g)\right).
\end{equation*}
It is a square integrable non cuspidal metaplectic forms of spectral parameter $(\frac{1}{3},0)$ and of $K$-type $(l,q)$. 
\end{defi}

From the Fourier expansion of $E_\sigma(s,p,l,q;g)$, one gets 

\begin{equation*}
\begin{split}
\theta_\sigma((l,q),\tau^{-1}g)
&= \frac{\pi}{\ec{Vol}(\Lambda_\tau)} \frac{\Gamma(l+2/3) \Gamma(1/3)}{\Gamma(l+4/3) \Gamma(2/3)} \rho_{\theta_\sigma}(\tau,0) \phi_{l,q}(-1/3,0)(g)\\
&+ \frac{1}{\ec{Vol}(\Lambda_\tau)} \sum_{0\neq m\in\Lambda_\tau^\dual} \rho_{\theta_\sigma}(\tau,m) \rond{A}_m\phi_{l,q}(1/3,0)(g),
\end{split}
\end{equation*}
where 

\begin{equation*}
\rho_{\theta_\sigma}(\tau,m) =\ec{Res}_{s=1/3} \left( \psi_{\sigma,\tau}(s,m,0)\right).
\end{equation*}

Let $L^{2,\ec{res}}\left(\Gamma\backslash G, \kappa\right)$ be the space generated by the theta series $\theta_\sigma((l,q),g)$, where $l\in \N$, $q\in \Z$, $\tq q\tq \leqslant l$ and where $\sigma^{-1}(\infty)$ runs over a set of inequivalent essential cusp of $\Gamma$. The set $L^{2,\ec{cusp}}\left(\Gamma\backslash G, \kappa\right)$ is defined as the space generated by cuspforms. We conclude this section by stating the spectral decomposition theorem:

\begin{thm}\label{meta:thm:2}
Let $L^{2,disc}\left(\Gamma\backslash G, \kappa\right)$ be the direct sum of the invariant irreducible subspaces of ${\rm L}^2\left(\Gamma\backslash G, \kappa\right)$. Then
$L^{2,disc}\left(\Gamma\backslash G, \kappa\right)$ is the direct sum of ${\rm L}^{2,\ec{res}}\left(\Gamma\backslash G, \kappa\right)$ and ${\rm L}^{2,\ec{cusp}}\left(\Gamma\backslash G, \kappa\right)$ and, if $L^{2,\ec{cont}}\left(\Gamma\backslash G, \kappa\right)$ is the orthogonal complement to \\
$L^{2,disc}\left(\Gamma\backslash G, \kappa\right)$, we have

\begin{equation}
L^2\left(\Gamma\backslash G, \kappa\right) =
L^{2,\ec{res}}\left(\Gamma\backslash G, \kappa\right) \oplus
L^{2,\ec{cusp}}\left(\Gamma\backslash G, \kappa\right) \oplus
L^{2,\ec{cont}}\left(\Gamma\backslash G, \kappa\right) .
\end{equation}
\end{thm}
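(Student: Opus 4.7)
The plan is to follow Langlands's and Selberg's spectral decomposition for congruence quotients of rank one, transplanted to the cubic metaplectic setting (cf.\ \cite{gul:these}, \cite{bru-mot:fourth}). The starting point is the abstract orthogonal decomposition
\begin{equation*}
L^2(\Gamma\backslash G,\kappa) = L^{2,\mathrm{disc}}(\Gamma\backslash G,\kappa) \oplus L^{2,\mathrm{cont}}(\Gamma\backslash G,\kappa),
\end{equation*}
in which $L^{2,\mathrm{disc}}$ is defined as the Hilbert sum of all irreducible $G$-invariant closed subspaces, and $L^{2,\mathrm{cont}}$ as its orthogonal complement. This decomposition is purely general: it requires no more than the fact that the regular representation of $G$ on $L^2(\Gamma\backslash G,\kappa)$ is unitary, so nothing need be proved about it.

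The substance of the theorem is the identification $L^{2,\mathrm{disc}} = L^{2,\mathrm{res}} \oplus L^{2,\mathrm{cusp}}$. The inclusion $L^{2,\mathrm{cusp}} \subset L^{2,\mathrm{disc}}$ is the classical Gelfand--Piatetski-Shapiro fact: the projection of $L^2(\Gamma\backslash G,\kappa)$ onto the cuspidal subspace intertwines with a compact convolution operator coming from a sufficiently regular test function $\varphi\in C_c^\infty(G)$, so $L^{2,\mathrm{cusp}}$ decomposes discretely into irreducibles with finite multiplicities. The only modification needed is that in the metaplectic setting one works with functions transforming by the Kubota character $\kappa$; since $\kappa$ is a unitary character of $\Gamma_2$, Gelfand--Piatetski-Shapiro's argument carries over verbatim, using that a fundamental domain for $\Gamma_d$ in $G$ has only finitely many essential cusps.

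It remains to prove that the orthogonal complement of $L^{2,\mathrm{cusp}}$ inside $L^{2,\mathrm{disc}}$ is exactly $L^{2,\mathrm{res}}$. This is where Eisenstein series enter. Langlands's theory of Eisenstein series (in the metaplectic form worked out in \cite{pat:theta} and \cite{gul:these}) gives the meromorphic continuation and functional equation of $E_\sigma(s,p,l,q;g)$ listed in items $(i)$--$(iii)$ above. The spectral decomposition theorem of Langlands then says that the orthogonal complement of $L^{2,\mathrm{cusp}}$ is spanned by (i) integrals over the unitary axis $\Re(s)=0$ of Eisenstein series, which form $L^{2,\mathrm{cont}}$, and (ii) the residues of Eisenstein series at poles in $\Re(s)>0$, which produce the discrete residual contribution. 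By items $(ii)$ and $(iii)$, the only poles in $\Re(s)\geqslant 0$ occur at $s=1/3$ when $p=0$, and the residue at $s=1/3$ of $E_\sigma(s,0,l,q;\cdot)$ is by definition the theta form $\theta_\sigma((l,q),\cdot)$. This yields precisely $L^{2,\mathrm{res}}$, concluding the proof.

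The delicate point, and the one I expect to be the main obstacle, is exactly this last step: running Langlands's contour-shifting argument in the presence of the Kubota character forces one to control the growth of $E_\sigma(s,p,l,q;g)$ on vertical strips and to show that no unexpected poles appear in the critical strip. This requires the explicit form of the scattering matrix entries $\psi_{\sigma,\tau}(s,m,p)$ together with functional equations for cubic Gauss sums; these have been established in \cite{pat:theta} and, for our specific groups $\Gamma_d$, reviewed in \cite{gul:these}, and they rule out any residual spectrum beyond the theta forms. Once this holomorphy is in hand, the announced decomposition follows.
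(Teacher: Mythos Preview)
The paper does not actually prove Theorem~\ref{meta:thm:2}: it is \emph{stated} at the end of Section~\ref{sec:meta} as the spectral decomposition theorem, with no proof given, and the reader is implicitly referred to the standard theory (Selberg--Langlands, in the form worked out in \cite{pat:theta}, \cite{bru-mot:fourth}, \cite{gul:these}). So there is no ``paper's own proof'' to compare against.

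Your sketch is the correct standard argument and matches what those references do: the abstract splitting into discrete and continuous parts, Gelfand--Piatetski-Shapiro compactness for $L^{2,\mathrm{cusp}}$, and Langlands's contour shift identifying the non-cuspidal discrete part with residues of Eisenstein series at $s=1/3$. One small caution: your final paragraph says the scattering data ``rule out any residual spectrum beyond the theta forms''; strictly speaking, what the Langlands argument gives is that the residual spectrum is spanned by \emph{all} residues of Eisenstein series in $\Re(s)>0$, and it is the input from \cite{pat:theta} (items $(ii)$--$(iii)$ listed just before the theorem) that the only such pole is at $s=1/3$ with $p=0$. You have this right, but the logical order is that the pole structure is an \emph{input} to the decomposition, not something the contour-shifting argument itself establishes. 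With that clarification your sketch is fine and is exactly the argument underlying the references the paper cites.
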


\section{On the spectrum of the metaplectic group}\label{sec:spectrum}

In this section we state the Bruggeman-Kuznetsov formula for $\Q(\omega)$ and obtain some estimates related to the discrete spectrum of the metaplectic group. Let $J_s(z)$ be the usual Bessel function. Let us define  

\begin{align*}
\rond{J}_{s,p}(z)&= J_{s-p} (z) J_{s+p}(\barre{z}) \\
&= \left\vert \frac{z}{2}\right\vert^{2s} \left(\frac{z}{\tq z\tq}\right)^{-2p} \sum_{m,n\geqslant 0} \frac{(-1)^{m+n} \left(z/2\right)^{2m} \left(\barre{z}/2\right)^{2n}}{m! n! \Gamma(s-p+m+1) \Gamma(s+p+n+1)}\\
\intertext{and}
\rond{K}_{s,p}(z)&= \frac{1}{\sin \pi s} \left( \rond{J}_{-s,-p}(z) - \rond{J}_{s,p}(z)\right).
\end{align*}
Because $\rond{J}_{s,p}(z)=\rond{J}_{-s,-p}(z)$ when $s,p\in\Z$, $\rond{K}_{s,p}(z)$ is holomorphic as function of $s$.
Let us define the following Bessel transform:

\begin{defi} 
Let $\rond{H}_\alpha$ be the set of functions defined on $\{s\in\C\,:\, \tq \Re(s)\tq \leqslant \alpha\}\times \Z$ such that

\begin{itemize}
\item[(i)] $h(s,p)=h(-s,-p)$,
\item[(ii)] $h$ is holomorphic on $\tq \Re(s)\tq \leqslant \alpha$,
\item[(iii)] $h(s,p)\ll (1+\tq s\tq)^{-a} (1+\tq p\tq)^{-b}$, for some $a,b >0$.
\end{itemize}
Let $\alpha\in]\frac{1}{3},1[$ and let $h\in \rond{H}_\alpha$. Define $\gras{B}h$ on $\C$ by

\begin{equation*}
\gras{B}h(z)= \frac{1}{2\pi i} \sum_{p\in \Z} \int_{(0)}\rond{K}_{s,p}(z) h(s,p) (p^2-s^2) \,ds.
\end{equation*}
This converges absolutely for $a>2$ and $b>3$.
\end{defi}

The {\it spectral sum formula} is a statement independent of the $K$-type $(l,q)$, that we had to carry until now. The proof is made in two steps, the first one being the computation of the inner product of Poincar\'e series in two ways. One gets a spectral formula depending on the K-type. The second step is then to get rid of the K-type by using an extension method. This work was done by \cite{bru-mot:fourth} for the case ${\rm SL}_2(\Z[i])$ and was extended by \cite{gul:these} for any quadratic number field.

\begin{thm}\label{spectrum:thm:1}
Let $h$ be a function of $\rond{H}_\alpha$, $\alpha \in ]\frac{1}{3},1[$. Let $\Gamma$ be a discrete subgroup of $SL_2(R)$ and let $\sigma^{-1}(\infty)$ and $\tau^{-1}(\infty)$ two essential cusps of $\Gamma$. Then

\begin{gather*}
\sum_{c\neq 0} \frac{K_{\sigma,\tau}(m,n,c)}{\rond{N}(c)} \gras{B}h \left(\frac{4\pi \sqrt{mn}}{c}\right) + \delta_{\sigma,\tau}\delta_{m,n}\frac{1}{2\pi^3i} \sum_{p\in \Z} \int_{(0)} h(s,p) (p^2-s^2)\,ds \\
= \sum_V \barre{\rho_V(m)} \rho_V(n) h(s_V,p_V) \\
 + \frac{1}{2i\pi}\sum_{\sigma_i\in\rond{C}(\Gamma)}\frac{1}{\tq \Lambda_{\sigma_i}\tq } \sum_{p\in \Z} \int_{(0)}  \barre{\psi_{\sigma,\sigma_i}(\sigma,m,p)} \psi_{\sigma_i,\tau}(\tau,n,p) h(s,p)\,ds.
\end{gather*}
\end{thm}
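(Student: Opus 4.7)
My plan follows the two-step strategy hinted at by the author and carried out in \cite{bru-mot:fourth} and \cite{gul:these}: first one establishes a spectral identity at a fixed $K$-type by computing the inner product of two Poincar\'e series in two ways, then one removes the $K$-type by an integral/summation device to recover the free test function $h$.

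For the first step, I would attach to the essential cusp $\sigma^{-1}(\infty)$ and an integer $m\in \Lambda_\sigma^\dual$ a Poincar\'e series
\begin{equation*}
P_\sigma(m; s,p,l,q; g) = \sum_{\gamma \in \Gamma_\sigma \backslash \Gamma} \barre{\kappa}(\gamma)\, \barre{\chi}_m(n(\sigma\gamma g))\, \phi_{l,q}(s,p)(\sigma\gamma g),
\end{equation*}
which belongs to $L^2(\Gamma\backslash G,\kappa)$ for $\Re(s)$ large and suitable $(l,q)$. Unfolding the integral $\langle P_\sigma(m;\cdots), P_\tau(n;\cdots)\rangle_{\Gamma\backslash G}$ via a Bruhat-type decomposition of $\sigma \Gamma \tau^{-1}$ produces, on one hand, a diagonal contribution from the trivial double coset (this is the source of $\delta_{\sigma,\tau}\delta_{m,n}$ times a measure factor), and on the other hand a sum indexed by the lower-left entry $c\neq 0$ whose coefficients are exactly the geometric Kloosterman sums $K_{\sigma,\tau}(m,n,c)$ of Definition \ref{Kloo:defi}. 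The remaining archimedean integral is a Bessel-type integral of the two Whittaker functions $\rond{A}_m\phi_{l,q}(s,p)$, and for the special choice of test functions in $\rond{H}_\alpha$ this archimedean factor will be the kernel $\rond{K}_{s,p}(z)$ against which $h$ is integrated.

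The second computation of the same inner product uses Parseval together with the spectral decomposition of Theorem \ref{meta:thm:2}. Expanding $P_\sigma(m;\cdots)$ along the discrete spectrum (cuspidal plus residual theta series) and along the Eisenstein continuous spectrum, the $m^{\text{th}}$ Fourier coefficient at $\sigma^{-1}(\infty)$ picks out the factor $\barre{\rho_V(\sigma,m)}$ for each irreducible discrete subspace $V$ and the Dirichlet series $\barre{\psi_{\sigma,\sigma_i}(s,m,p)}$ on the continuous side; similarly for $P_\tau(n;\cdots)$ with the coefficient at $\tau^{-1}(\infty)$. This yields the right-hand side of the claimed identity, but with $h(s,p)$ replaced by an explicit archimedean weight $H_{l,q}(s,p)$ that depends on the $K$-type $(l,q)$ chosen in the Poincar\'e series.

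To conclude, one must remove this $K$-type dependence. This is the step where the main technical work lies, and it is done by the extension method of Bruggeman-Motohashi: one takes a suitable $\C$-linear combination (in fact an integral over parameters) of the identities obtained for varying $(l,q)$, weighted so that the aggregated archimedean factor on the spectral side becomes exactly the prescribed test function $h(s,p)$, while the aggregated archimedean factor on the geometric side collapses to $\gras{B}h(4\pi\sqrt{mn}/c)$. The hypotheses in $\rond{H}_\alpha$ --- holomorphy on $|\Re(s)|\leqslant \alpha$ with $\alpha\in]\tfrac{1}{3},1[$ (which is exactly what is needed to push the contour past the possible exceptional eigenvalues controlled by Theorem \ref{meta:thm:1} and past the residual spectral parameter $s=1/3$ without picking up residues from Eisenstein series) and the decay $a>2$, $b>3$ (which guarantees absolute convergence of both the Bessel-kernel integral and the spectral sum in view of the mean growth of $\rho_V$ and of the Bessel functions $\rond{K}_{s,p}$) are dictated precisely by this step. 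The main obstacle I anticipate is the bookkeeping of the $K$-type extension over the nonabelian $K=SU(2)$: one must verify that the matrix coefficients $\Phi^l_{p,q}$ and the raising/lowering operators in $\rond{Z}(\id{g})$ conspire so that the $(l,q)$-sum telescopes into the clean Bessel transform $\gras{B}h$, and that all contour shifts stay within the strip where the Dirichlet series $\psi_{\sigma,\sigma_i}(s,m,p)$ are holomorphic, i.e.\ avoid the poles described in Section \ref{sec:meta}. Once this is done, the two expressions for $\langle P_\sigma,P_\tau\rangle$ coincide and give the stated sum formula.
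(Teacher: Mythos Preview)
Your proposal is correct and follows precisely the two-step strategy that the paper itself sketches immediately before Theorem~\ref{spectrum:thm:1}: compute the inner product of Poincar\'e series in two ways to obtain a $K$-type dependent identity, then eliminate the $K$-type via the Bruggeman--Motohashi extension procedure, with the details for general imaginary quadratic fields supplied by \cite{gul:these}. The paper does not give an independent proof but simply cites \cite{bru-mot:fourth} and \cite{gul:these}, so your outline is in fact more detailed than what the paper provides.
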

Since Fourier coefficients of representations are the same as the Fourier coefficients of any elements in the space of representations, and taking into account that the multiplicity of $V$ in $L^2\left(\Gamma\backslash G, \kappa\right)$ is the dimension of the space $L^2\left(\Gamma\backslash G,\kappa,(s,p)\right)$, we introduce the following notation corresponding to either the discrete or continuous spectrum of metaplectic forms with respect to the group $\Gamma=\Gamma_d$ (defined in \eqref{Kloo:eq:7}):

\begin{align}
A_{\sigma,\tau,m,n}^{\ec{disc}}(d,s,p) 
&= \sum_{\substack{f\in \ec{ orth. basis of}\\ L^2\left(\Gamma\backslash G,\kappa,(s,p)\right)}} \barre{\rho_f(\sigma,m)} \rho_f(\tau,n), \\
A_{\sigma,\tau,m,n}^{\ec{cont}}(d,s,p)
&= \sum_{\sigma_i} \barre{\psi_{\sigma,\sigma_i}(\sigma,m,p)} \psi_{\sigma_i,\tau}(\tau,n,p),
\end{align}
where the sum over $f$ is taken over an orthonormal basis of the proper subspace of $L^2\left(\Gamma\backslash {\rm SL}_2(\C),\kappa\right)$ corresponding to the spectral parameter $(s,p)$. The sum over the $\sigma_i$'s, means the sum over all essential cusps of the group $\Gamma$. 
With these notations, we rewrite Theorem\,\ref{spectrum:thm:1} as

\begin{thm}\label{spectrum:thm:2}
Let $\alpha\in]\frac{1}{3},1[$ and let $h\in \rond{H}_\alpha$, with  $a>2$ and $b>3$. Let $d$ be a primary Eisenstein integer and let $\sigma^{-1}(\infty)$ and $\tau^{-1}(\infty)$ be two essential cusps of $\Gamma_d$. Let $m,n\in\Z[\omega]-\{0\}$. Then,

\begin{gather*}
\sum_{c\neq 0} \frac{K_{\sigma,\tau}(m,n,c)}{\rond{N}(c)} \gras{B}h\left(\frac{4\pi\sqrt{mn}}{c}\right) 
+\delta_{\sigma,\tau}\delta_{m,n} \sum_{p\in \Z} \int_{(0)} h(s,p) (p^2-s^2)\,ds \\
= \sum_{(s,p)} A_{\sigma,\tau,m,n}^{\ec{disc}}(d,s,p) h(s,p) 
 + \frac{1}{2i\pi}  \sum_{p\in\Z}\int_{(0)} A_{\sigma,\tau,m,n}^{\ec{cont}}(d,s,p) h(s,p)\,ds,
\end{gather*}
where the first sum in the right side is taken over the spectral parameters $(s,p)$.
\end{thm}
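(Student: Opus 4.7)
The plan is to derive Theorem~\ref{spectrum:thm:2} directly from Theorem~\ref{spectrum:thm:1} by repackaging the discrete and continuous contributions into the notations $A^{\text{disc}}_{\sigma,\tau,m,n}(d,s,p)$ and $A^{\text{cont}}_{\sigma,\tau,m,n}(d,s,p)$. The left-hand sides already match term for term (with $\Gamma=\Gamma_d$), modulo the constant normalization absorbed into the definition of $\gras{B}h$; so the work is entirely on the right-hand side.

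First, I would handle the discrete spectral sum. In Theorem~\ref{spectrum:thm:1} the sum runs over all invariant irreducible subspaces $V\subset L^2(\Gamma_d\backslash G,\kappa)$, each contributing $\overline{\rho_V(m)}\rho_V(n)\,h(s_V,p_V)$. I would first group these by spectral parameter: for a fixed $(s,p)$, the collection of $V$ with $s_V=s$, $p_V=p$ spans the isotypic component $L^2(\Gamma_d\backslash G,\kappa,(s,p))$, which is finite-dimensional by Theorem~\ref{meta:thm:2} together with the finite multiplicity of irreducible representations. Choosing an orthonormal basis $\{f\}$ of this isotypic component and recalling the discussion in Section~\ref{sec:meta} that the Fourier coefficient $\rho_f(\sigma,m)$ depends only on the representation (not on the $K$-type of the particular vector $\phi_{l,q}(s,p)$ used to compute it), the sum $\sum_V \overline{\rho_V(m)}\rho_V(n)$ over the isotypic class becomes exactly $A^{\text{disc}}_{\sigma,\tau,m,n}(d,s,p)$. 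Factoring $h(s,p)$ out and summing over spectral parameters yields the first term on the right-hand side of Theorem~\ref{spectrum:thm:2}.

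Next, for the continuous contribution in Theorem~\ref{spectrum:thm:1},
\[
\frac{1}{2i\pi}\sum_{\sigma_i\in\rond{C}(\Gamma_d)}\frac{1}{|\Lambda_{\sigma_i}|}\sum_{p\in\Z}\int_{(0)}\overline{\psi_{\sigma,\sigma_i}(s,m,p)}\,\psi_{\sigma_i,\tau}(s,n,p)\,h(s,p)\,ds,
\]
I would exchange the order of summation to move the sum over essential cusps $\sigma_i$ inside, and recognize the inner sum (with the volume factors absorbed into the definition, consistent with the Eisenstein expansion recalled in Section~\ref{sec:meta}) as $A^{\text{cont}}_{\sigma,\tau,m,n}(d,s,p)$. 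This gives the second term on the right-hand side of Theorem~\ref{spectrum:thm:2}. The Dirichlet series $\psi_{\sigma,\sigma_i}$ converge absolutely only in $\Re(s)>1$, but along the critical line $\Re(s)=0$ they are defined by analytic continuation and the swap is justified by the absolute convergence of the outer integral under the hypotheses $a>2$, $b>3$ on $h$.

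The main conceptual point, rather than a technical obstacle, is the $K$-type independence of the Fourier coefficients: one must invoke the fact (stated in Section~\ref{sec:meta}) that if $T: H(s,p)\to V$ is the intertwining isomorphism, then $T\phi_{l,q}(s,p)$ and $T\phi_{0,0}(s,p)$ share the same Fourier coefficients $\rho_{s,p}(\sigma,m)$, so that the sum $A^{\text{disc}}$ is intrinsically attached to the isotypic component and not to the choice of $K$-type used to state Theorem~\ref{spectrum:thm:1}. With this identification in place, the remainder is bookkeeping: verifying that the extension method of Bruggeman--Motohashi and Lokvenec-Guleska (which removes the $(l,q)$-dependence in Theorem~\ref{spectrum:thm:1}) produces exactly the normalizations written above.
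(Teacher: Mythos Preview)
Your proposal is correct and matches the paper's approach exactly: the paper presents Theorem~\ref{spectrum:thm:2} as a direct rewriting of Theorem~\ref{spectrum:thm:1} in the notations $A^{\ec{disc}}$ and $A^{\ec{cont}}$, invoking precisely the $K$-type independence of Fourier coefficients and the identification of multiplicity with $\dim L^2(\Gamma\backslash G,\kappa,(s,p))$ that you spell out. Your treatment is in fact more detailed than the paper's, which simply states ``With these notations, we rewrite Theorem~\ref{spectrum:thm:1} as'' and gives no further argument.
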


One obtains the {\it Kloosterman sum formula} by inverting the Bessel transform $\gras{B}$ on one side.
Let $\gras{K}$ be the Bessel transform be defined by

\begin{equation*}
\gras{K}f (s,p) =\int_{\C^\times} \rond{K}_{s,p}(u) f(u) \tq u\tq^{-2} \,du.
\end{equation*}
Then, any compactly supported function $f$ on $\C^\times$ such that
$\gras{K}f\in \rond{H}_{\alpha}$ for some $\alpha >1$ satisfies the {\it Kloosterman sum formula}:

\begin{thm}\label{spectrum:thm:3}
Let $d$ be a primary Eisenstein integer and let $\sigma^{-1}(\infty)$ and $\tau^{-1}(\infty)$ be two essential cusps of $\Gamma_d$. Let $m,n\in\Z[\omega]-\{0\}$. Let $f\in C^\infty_c(\C^\times)$. Then,

\begin{align*}
& 
\sum_{c} \frac{K_{\sigma,\tau}(m,n,c)}{\rond{N}(c)}  f\left(\frac{4\pi\sqrt{mn}}{c}\right) = 
\sum_{(p,s)} A_{m,n,\sigma,\tau}^{\ec{disc}} (d,s,p) \gras{K}f (s,p) \\
&+ \frac{1}{2i\pi}\sum_{\sigma_i\in\rond{C}(\Gamma)} \sum_{p\in \Z} \int_{(0)}A_{m,n,\sigma,\tau}^{\ec{cont}}(d,s,p) \gras{K}f(s,p) \,ds. 
\end{align*}
\end{thm}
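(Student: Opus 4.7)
The strategy is to deduce the Kloosterman sum formula from the spectral sum formula of Theorem \ref{spectrum:thm:2} by inverting the Bessel transform $\gras{B}$ on the geometric side. Given $f \in C^\infty_c(\C^\times)$ with $\gras{K} f \in \rond{H}_\alpha$ for some $\alpha > 1$, I would apply Theorem \ref{spectrum:thm:2} to the test function $h := \gras{K} f$. The key identity is the Bessel inversion formula
\[
(\gras{B}\,\gras{K} f)(u) = f(u), \qquad u \in \C^\times,
\]
which converts the geometric side $\sum_{c \neq 0} \rond{N}(c)^{-1} K_{\sigma,\tau}(m,n,c)\, \gras{B} h(4\pi\sqrt{mn}/c)$ into the desired sum $\sum_c \rond{N}(c)^{-1} K_{\sigma,\tau}(m,n,c) f(4\pi\sqrt{mn}/c)$, while the spectral side is formally unchanged.

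Two verifications are then in order. First, one has to check that $h = \gras{K} f$ really satisfies the hypotheses of Theorem \ref{spectrum:thm:2}: the symmetry $h(s,p) = h(-s,-p)$ and holomorphy in $s$ on $|\Re(s)| \leqslant \alpha$ follow from the corresponding properties of the kernel $\rond{K}_{s,p}$, while the decay $h(s,p) \ll (1+|s|)^{-a}(1+|p|)^{-b}$ with $a > 2$, $b > 3$ is exactly the assumption $\gras{K} f \in \rond{H}_\alpha$ with $\alpha > 1$ built into the statement. Second, one has to eliminate the diagonal term $\delta_{\sigma,\tau}\delta_{m,n} \sum_{p \in \Z} \int_{(0)} h(s,p)(p^2-s^2)\,ds$ that appears on the left-hand side of Theorem \ref{spectrum:thm:2}. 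This integral is precisely the natural limit of $\gras{B} h$ as the argument approaches the origin, so by the inversion formula it equals a multiple of $f(0)$, which vanishes because $f$ is compactly supported in $\C^\times$ and hence identically zero in a neighborhood of the origin.

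The hardest step will be the rigorous justification of the Bessel inversion formula together with the attendant interchanges of summation, integration, and contour shift. The stronger condition $\alpha > 1$, as opposed to the $\alpha > 1/3$ required in Theorem \ref{spectrum:thm:2}, is precisely what gives a strip $|\Re(s)| \leqslant \alpha$ wide enough to shift contours past the exceptional spectral point $s = 1/3$ and to guarantee absolute convergence of the Mellin--Barnes representations underlying the inversion. The inversion itself and the vanishing of the diagonal contribution are the ${\rm PSL}_2(\C)$ analogues of the classical Hankel inversion; in the setting of imaginary quadratic fields they are embedded in the framework of Bruggeman--Motohashi \cite{bru-mot:fourth} and Lokvenec-Guleska \cite{gul:these}, which we are using throughout this section. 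Once these analytic inputs are granted, Theorem \ref{spectrum:thm:3} follows as an immediate corollary of Theorem \ref{spectrum:thm:2}.
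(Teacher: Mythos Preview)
Your proposal is correct and follows exactly the paper's approach: substitute $h=\gras{K}f$ into Theorem~\ref{spectrum:thm:2} and invoke the Bessel inversion identity (the paper states it as $2\pi\,\gras{B}\gras{K}f=f$, referring to \cite{gul:these} for the analytic details). Your additional remarks on verifying the $\rond{H}_\alpha$ hypotheses and on the vanishing of the diagonal term are more explicit than the paper's one-line proof, but the strategy is identical.
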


\begin{proof}
The proof amounts to show that

\begin{equation*}
2\pi \gras{B}\gras{K}f=f.
\end{equation*}
As a consequence, substituting $h$ by $\gras{K}f$ in Theorem\,\ref{spectrum:thm:2} gives the result. All details are given in \cite{gul:these}.
\end{proof}

In the rest of this section, we derive from the spectral sum formula some consequences on the spectrum of $L^2\left(\Gamma_d\backslash {\rm SL}_2(\C),\kappa\right)$. This is done by choosing a suitable function $h$ in Theorem\,\ref{spectrum:thm:2}, and by estimating the $\delta$-term and the Kloosterman term; the $\delta$-term will be evaluated directly, and the Kloosterman term will be estimated with the Weil upper bound. 
Let $m$ and $n$ be some fixed integers in $\Z[\omega] - \{0\}$. 

\begin{prop}\label{spectrum:prop}
Let $d\in{\rm SL}_2\left(\Z[\omega]\right)$ be a primary integer. Let $\sigma^{-1}$ be one of the two matrices defined in \eqref{Kloo:eq:8}.  Then,

(i) Let $a> 2$ and $b> 2$. Then, for $X\ll 1$, 
\begin{gather*}
\sum_{\substack{(s,p)\\s\in i\R}} A_{n,n,\sigma,\sigma}^{\ec{disc}}(d,s,p) (1+\tq s\tq)^{-a}  (1+\tq p\tq)^{-b} \\
+ \sum_p\int_{(0)} A_{n,n,\sigma,\sigma}^{\ec{cont}}(d,s,p) (1+\tq s\tq)^{-a}  (1+\tq p\tq)^{-b} \,ds \ll  1.
\end{gather*}

(ii) Let $S$ be a subset of the exceptional spectrum of $\Delta$ in $L^2\left(\Gamma\backslash G, \kappa\right)$. Then, for $x\geqslant 1/2$,

\begin{align*}
\sum_{s_j\in S} A_{n,n,\sigma,\sigma}^{\ec{disc}}(d,s_j,p) \rond{N}(d)^{4xs_j} & \ll  \rond{N}(d)^{2x-1} \tau(d) \log^2(\rond{N}(d)).
\end{align*}
\end{prop}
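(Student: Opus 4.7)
The proof rests on the spectral sum formula (Theorem \ref{spectrum:thm:2}), applied with $\sigma=\tau$ one of the matrices of \eqref{Kloo:eq:8} and $m=n$. By Lemma \ref{Kloo:lemme} $(ii)$, $K_{\sigma,\sigma}(n,n,c)$ vanishes unless $d \mid c$, which is the mechanism by which the level enters both estimates.

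For $(i)$, pick a non-negative $h_0 \in \rond{H}_\alpha$ (for some $\alpha \in (1/3,1)$) which dominates $(1+\tq s\tq)^{-a}(1+\tq p\tq)^{-b}$ pointwise on $i\R \cup (0,1/6]$ and decays rapidly in both variables; a product of a Gaussian in $s$ and an even rational function in $p$ will do. The spectral coefficients $A^{\ec{disc}}_{n,n,\sigma,\sigma}(d,s,p)$ and $A^{\ec{cont}}_{n,n,\sigma,\sigma}(d,s,p)$ are non-negative, being squared moduli of Fourier respectively of $\psi$-coefficients, so the weighted spectral sum we wish to bound is majorized by the full spectral side of Theorem \ref{spectrum:thm:2} evaluated at $h_0$. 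It therefore suffices to bound the other side. The $\delta$-term is $\rond{O}(1)$ independently of $d$. For the Kloosterman term, the Weil bound \eqref{Kloo:eq:Weil} combined with decay of $\gras{B}h_0(z)$ both at infinity (obtained from holomorphicity and rapid decay of $h_0$ via integration by parts in $s$) and at the origin (from shifting the defining contour) makes the geometric series absolutely convergent, with value $\rond{O}(1)$.

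For $(ii)$, we amplify. Set
\[
h(s,p) = \bigl(\rond{N}(d)^{2xs}+\rond{N}(d)^{-2xs}\bigr)^2 h_0(s,p),
\]
where $h_0 \in \rond{H}_\alpha$ is a fixed non-negative function with $h_0 \gg 1$ on $(0,1/6]\times\Z$. The function $h$ is entire in $s$, satisfies $h(s,p)=h(-s,-p)$, and on $i\R$ the amplifier equals $4\cos^2(2xt\log\rond{N}(d)) \leq 4$. At each exceptional parameter $s_j \in (0,1/6]$ (which is the allowed range by Theorem \ref{meta:thm:1}), one has $h(s_j,0) \gg \rond{N}(d)^{4xs_j}$, so the exceptional part of the discrete spectral side majorizes the quantity to bound. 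The continuous spectrum and non-exceptional discrete contributions of $h$ are bounded by four times their analogues with $h_0$, hence $\rond{O}(1)$ via $(i)$. The theta-term at $s=1/3$ contributes $\ll \rond{N}(d)^{4x/3}\tq\rho_{\theta_\sigma}(\sigma,n)\tq^2$, absorbed into the target bound thanks to classical estimates on cubic Gauss sums, and the $\delta$-term is $\rond{O}(1)$.

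It remains to bound the Kloosterman side, which is the main obstacle. Shifting the defining contour of $\gras{B}h(z)$ from $\Re(s)=0$ to a line $\Re(s)=\alpha$ close to $1/2$ (permissible because $h$ is entire in $s$), the amplifier contributes a factor $\rond{N}(d)^{4x\alpha}$ and $\gras{K}_{s,p}(z)$ contributes $\tq z\tq^{-2\alpha}$ for $|z|$ small, while smoothness of $h_0$ gives fast decay of the integrand for $|z|$ large. Combining with Weil's bound \eqref{Kloo:eq:Weil}, writing $c=dc'$ so that $2^{\omega(c)}\leq \tau(d)\, 2^{\omega(c')}$, and splitting the $c'$-sum dyadically around $\tq c'\tq \sim \tq n/d\tq$ to separate the two Bessel regimes, the optimal choice $\alpha = 1/2-1/(4x)$ (which uses $x\geq 1/2$) balances the two contributions and yields
\[
\sum_{d\mid c}\frac{\tq K_{\sigma,\sigma}(n,n,c)\tq}{\rond{N}(c)}\left\vert \gras{B}h\!\left(\tfrac{4\pi n}{c}\right)\right\vert \ll \rond{N}(d)^{2x-1}\tau(d)\log^2\rond{N}(d),
\]
where the two logarithmic factors arise from the boundary of the contour shift and from the dyadic partition. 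The delicate point, and where the exponent $2x-1$ is forced, lies in this simultaneous balancing of amplifier growth, Bessel asymptotics and Weil bound; the remaining pieces are routine.
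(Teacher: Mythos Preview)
Your strategy for both parts---apply Theorem~\ref{spectrum:thm:2} with $\sigma=\tau$, use positivity of the spectral side, and bound the Kloosterman side via Weil and a contour shift---is exactly the paper's strategy, and part~$(i)$ is fine as sketched. The paper simply takes $h(s,p)=(1+|s|)^{-a}(1+|p|)^{-b}$ directly, shifts the $\mathbf{B}h$ integral to some $\sigma\in(1/2,1)$, and gets $\mathbf{B}h(z)\ll\rond N(z)$, which makes the Kloosterman sum $\ll\rond N(d)^{-3/2+\varepsilon}$.

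Part~$(ii)$, however, has a genuine gap in the Kloosterman analysis. After rewriting $\mathbf{B}h$ in terms of $\rond J_{s,p}$ alone (using $h(s,p)=h(-s,-p)$) and shifting the contour to $\Re(s)=\alpha>0$, the Bessel factor behaves like $|z|^{2\alpha}$ for small $|z|$, \emph{not} $|z|^{-2\alpha}$ as you write. With the correct sign the Kloosterman tail is
\[
\rond N(d)^{4x\alpha}\sum_{d\mid c}\tau(c)\,\rond N(c)^{-1/2-\alpha},
\]
which converges only for $\alpha>1/2$; your choice $\alpha=1/2-1/(4x)<1/2$ therefore cannot work, and the ``balancing'' you describe does not occur. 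The correct move (and the paper's) is to take $\alpha=1/2+1/\log\rond N(d)$: then $\rond N(d)^{4x\alpha}\asymp\rond N(d)^{2x}$, $\rond N(d)^{-1/2-\alpha}\asymp\rond N(d)^{-1}$, and $\zeta_k(1/2+\alpha)^2\asymp\log^2\rond N(d)$, giving precisely $\rond N(d)^{2x-1}\tau(d)\log^2\rond N(d)$. The two logarithms come from the pole of $\zeta_k^2$ at $1$, not from any dyadic decomposition.

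Two further remarks. First, the paper's test function is $h(s,0)=\bigl(\sin(-iLs)/(Ls)\bigr)^4$ with $L\asymp x\log\rond N(d)$, which plays the same amplifying role as your $(\rond N(d)^{2xs}+\rond N(d)^{-2xs})^2h_0$; either works once the contour is placed correctly. Second, your separate treatment of the theta contribution at $s=1/3$ is unnecessary and the appeal to ``classical estimates on cubic Gauss sums'' is not justified: for $x$ close to $1/2$ one would need $|\rho_{\theta_\sigma}(\sigma,n)|^2\ll\rond N(d)^{-2/3}$, which is not an a priori input but rather a \emph{consequence} of the proposition. In the paper's argument the point $s=1/3$ is simply one of the $s_j$ on the left, and the density bound handles it along with the rest.
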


\begin{proof}
For $(i)$, we apply Theorem\,\ref{spectrum:thm:2} with the special choice 

\begin{equation*}
h(s,p)= (1+\tq s\tq)^{-a} (1+\tq p\tq)^{-b},
\end{equation*}
which gives

\begin{equation}\label{spectrum:eq:prop1}
\begin{split}
&\sum_{(s,p)} A^{\ec{disc}}_{m,m,\sigma,\sigma}(d,s,p) (1+\tq s\tq)^{-a} (1+\tq p\tq)^{-b}  \\
& +\frac{1}{2i\pi} \sum_{p\in\Z}(1+\tq p\tq)^{-b} \int_{(0)} A^{\ec{cont}}_{m,m,\sigma,\sigma}(d,s,p) (1+\tq s\tq)^{-a}\,ds \\
&= \sum_{p\in \Z} \int_{(0)} h(s,p) (p^2-s^2)\,ds   
+ \sum_{c\neq 0} \frac{K_{\sigma,\sigma}(m,m,c)}{\rond{N}(c)} \gras{B}h\left(\frac{4\pi m}{c}\right) .
\end{split}
\end{equation}
For the first integral of the right hand side of \eqref{spectrum:eq:prop1}, we have

\begin{align}
&\sum_{p\in \Z} \int_{(0)} h(s,p)  (p^2-s^2) \,ds \nonumber\\
&= \sum_{p\in \Z} (1+\tq p\tq)^{-b} \int_0^\infty \frac{p^2+t^2}{(1+t)^a} \,dt \nonumber\\
&= \rond{O}_{a,b}(1), \quad \ec{ if $a\geqslant 4$ and $b> 3$}. \label{spectrum:eq:prop2}
\end{align}
Before evaluating the Kloosterman term,
we begin by the transform $\gras{B}h(z)$. Recall that, by definition of $\gras{B}h$ and $\rond{K}_{s,p}(z)$,

\begin{align*}
\gras{B}h(z)
&= \frac{1}{2\pi i} \sum_{p\in \Z} \int_{(0)}\rond{K}_{s,p}(z)  h(s,p) (p^2-s^2) \,ds\\
&= \frac{1}{\pi i} \sum_{p\in \Z} \int_{(0)}\rond{J}_{s,p}(z) h(s,p) \frac{(s^2-p^2)}{\sin \pi s} \,ds.
\end{align*}
Since there is a pole at $0$ for $p\neq 0$, by shifting the integral we obtain, for any $1/2 < \sigma <1$,

\begin{equation}\label{spectrum:eq:prop3}
\gras{B}h(z) 
= \frac{2}{\pi i} \sum_{p\geqslant 0} \int_{(\sigma)} \rond{J}_{s,p}(z) h(s,p) \frac{(s^2-p^2)}{\sin \pi s} \,ds + \frac{2}{\pi i}\sum_{p\geqslant 1} p^2 \rond{J}_{0,p}(z) h(0,p). 
\end{equation}
Here, we have used the fact that $h(s,p)=h(s,-p)$.
We are working with $z$ belonging to some compact set, in which case the estimate $J_s(z)\ll \frac{1}{\Gamma(s+1)} \left(\frac{\tq z\tq}{2}\right)^{\Re(s)}$ is valid. It follows that 

\begin{gather*}
p^2 \rond{J}_{0,p}(z) (1+ p)^{-b} \ll p^2 (1+p)^{-b} \frac{\left(\tq z\tq/2\right)^{2 p}}{(p !)^2},
\end{gather*}
and, by Stirling's formula, we obtain

\begin{equation}\label{spectrum:eq:prop4}
\frac{2}{\pi i}\sum_{p\geqslant 1} p^2 \rond{J}_{0,p}(z) h(0,p)
\ll \sum_{p\neq 0} p (1+ p)^{-b} \left(\frac{\tq z\tq e}{2 p}\right)^{2p}.
\end{equation}
For the integral over $(\sigma)$ in \eqref{spectrum:eq:prop3}, the same estimate as above for $J_s(z)$ leads to

\begin{align*}
\rond{J}_{s,p}(z) 
&\ll \left\vert z/2\right\vert^{2\Re(s)} \Gamma(s+p+1)^{-1} \Gamma(s-p+1)^{-1}\\
&\ll \left\vert z/2\right\vert^{2\Re(s)} \Gamma(s+p+1)^{-1} (-\pi)^{-1} \sin (\pi (s-p)) \Gamma(p-s),
\end{align*}
thus we obtain

\begin{align}
\frac{J_{s,p}(z)}{\sin \pi s} 
&\ll  \left\vert z/2\right\vert^{2\Re(s)} \frac{\tq \Gamma(p-s)\tq}{\tq \Gamma(s+p+1)\tq} \nonumber\\
&\ll \left\vert z/2\right\vert^{2\Re(s)} \frac{\tq \Gamma(-s)\tq}{\tq \Gamma(s)\tq \tq s+p\tq} \prod_{i=0}^{p-1} \frac{\tq j-s\tq}{\tq j+s\tq} \nonumber\\
&\ll \left\vert z/2\right\vert^{2\Re(s)} \frac{\left(\tq s\tq /e\right)^{\Re(-s)}}{\tq s+p\tq \left(\tq s\tq /e\right)^{\Re(s)}} \nonumber\\
&\ll \frac{\left\vert z/ s\right\vert^{2\Re(s)}}{\tq s+p\tq}.\label{spectrum:eq:prop5}
\end{align}
Inserting \eqref{spectrum:eq:prop5} in the integral in \eqref{spectrum:eq:prop3} gives

\begin{align}
\int_{(\sigma)}\rond{J}_{s,p}(z) h(s,p) \frac{(s^2-p^2)}{\sin \pi s} \,ds 
\ll \int_{(\sigma)} \frac{\tq z\tq^{2\sigma}}{\tq s\tq^{2\sigma}} (1+\tq s\tq)^{-a} (1+p)^{-b} \frac{\tq s^2-p^2\tq}{\tq s+p\tq} \,ds \nonumber\\
\ll (1+p)^{-b} \tq z\tq^{2\sigma} \int_{(\sigma)} (1+\tq s\tq)^{-a} \frac{(\tq s\tq+p)}{\tq s\tq^{2\sigma}} \,ds.\label{spectrum:eq:prop6}
\end{align}
Since we assumed $1-2\sigma <0$, we have $(\tq s\tq+p) \tq s\tq^{-2\sigma} \leqslant \sigma^{-2\sigma}(\sigma+p)$ and thus the remaining integral converges, because $a>1$. Finally, combining the estimate\eqref{spectrum:eq:prop6} with \eqref{spectrum:eq:prop4}, we obtain 

\begin{align}
\gras{B}h(z)
& \ll \sum_{p\geqslant 0} (1+p)^{-b} \left(\frac{\tq z\tq}{\sigma}\right)^{2\sigma} (\sigma+p) 
+ \sum_{p\geqslant 1} p (1+ p)^{-b} \left(\frac{\tq z\tq e}{2 p}\right)^{2p}\nonumber\\
& \ll \tq z\tq^{2\sigma} \sum_{p\geqslant 0} (1+p)^{1-b} + \tq z\tq^2 \sum_{p\geqslant 1} (1+p)^{1-b} \left(\frac{\tq z\tq e}{2 p}\right)^{2(p-1)}\nonumber\\
&\ll \rond{N}(z), \quad \ec{for $b\geqslant 3$}.\label{spectrum:eq:prop7}
\end{align}
The Kloosterman term can now be estimated, using Weil's upper bound. From Lemma\,\ref{Kloo:lemme} part $(ii)$, the $c$'s have to be divisible by $d$. Then, from \eqref{Kloo:eq:Weil} and \eqref{spectrum:eq:prop7}, it follows that

\begin{align}
&\sum_{c\congru 0 \pmod d} \frac{K_{\sigma,\sigma}(m,m,c)}{\rond{N}(c)} \gras{B}h\left(\frac{4\pi m}{c}\right)
\ll \sum_{c\congru 0\pmod d} \rond{N}(c)^{-1/2+\epsilon} \gras{B}h\left(\frac{4\pi m}{c}\right)\nonumber\\
& \ll \sum_{c\congru 0\pmod d} \rond{N}(c)^{-1/2+\epsilon} \rond{N}(m) \rond{N}(c)^{-1}\nonumber\\
&\ll \rond{N}(m) \rond{N}(d)^{-3/2+\epsilon}. \label{spectrum:eq:prop8}
\end{align}
Assertion $(i)$ follows from \eqref{spectrum:eq:prop1}, \eqref{spectrum:eq:prop2} and \eqref{spectrum:eq:prop8}. 

For $(ii)$, we use again Theorem\,\ref{spectrum:thm:2} and choose, for some $L>0$ that will be chosen later, 

\begin{equation*}
h(s,p) =\begin{cases} 
\left(\frac{\sin (-iL s)}{Ls}\right)^4 & p=0 \\ 0& p\neq 0.\end{cases}
\end{equation*}
As in the proof of $(i)$, we obtain an upper estimate, dealing with the two terms separately:

Firstly, the contribution of the $\delta$-term is

\begin{equation}\label{spectrum:eq:prop9}
\int_{(0)} h(s,0) s^2\,ds
= \int_0^\infty \left(\frac{\sin (L t)}{iLt}\right)^4 t^2 \,dt\\
=L^{-4} \int_0^\infty \sin^4 (L t) t^{-2} \,dt
=O\left(L^{-3}\right).
\end{equation}

Then, for the Kloosterman term, we note that since $p=0$, there is no residue of $\rond{J}_{s,p}(z)$, thus

\begin{equation*}
\int_{(\sigma)}\rond{J}_{s,0}(z) h(s,0) \frac{(s^2)}{\sin \pi s} \,ds 
\ll \tq z\tq^{2\sigma} \sigma^{1-2\sigma} \int_{(\sigma)}  h(s,0) \,ds,
\end{equation*}
and we obtain 

\begin{equation}\label{spectrum:eq:prop10}
\gras{B}h(z) \ll 
 \tq z\tq^{2\sigma} \sigma^{1-2\sigma} \int_0^\infty \left(\frac{\sin (-iL (\sigma +it))}{L(\sigma+it)}\right)^4 \,dt.
\end{equation}
Moreover, using the upper bound

\begin{gather*}
\tq \sin (-iL (\sigma +it)) \tq= \tq \frac{1}{2i} \left( e^{L(\sigma+it)}-e^{-L(\sigma+it)}\right) \tq \le e^{L\sigma} + e^{-L\sigma}\ll e^{L\sigma},
\end{gather*}
we obtain for the integral in \eqref{spectrum:eq:prop10}

\begin{equation}\label{spectrum:eq:prop11}
\left\vert \int_0^\infty \left(\frac{\sin (-iL (\sigma +it))}{L(\sigma+it)}\right)^4 \,dt \right\vert 
\ll \int_0^\infty \frac{e^{4L\sigma}}{L^4 (\sigma^2+t^2)^2} \,dt
\ll \frac{e^{4L\sigma}}{L^4 \sigma^3}.
\end{equation}
Thus, as in the proof of $(i)$, by Theorem\,\ref{spectrum:thm:2}, \eqref{spectrum:eq:prop9}, \eqref{spectrum:eq:prop11} and the Weil upper bound, we deduce that

\begin{gather*}
\sum_{(s,p)} A^{\ec{disc}}_{m,m,\sigma,\sigma}(d,s,p) h(s,p)\\
\ll L^{-3} + \sum_{\substack{c\congru 0 \pmod d\\ c\neq 0}} \frac{K_{\sigma,\tau}(m,n,c)}{\rond{N}(c)} \rond{N}(m)^\sigma \rond{N}(c)^{-\sigma} \sigma^{1-2\sigma} \frac{e^{4L\sigma}}{L^4 \sigma^3}\\
\ll L^{-3} + \frac{e^{4L\sigma}}{L^4} \rond{N}(m)^\sigma \sum_{\substack{c\congru 0 \pmod d\\ c\neq 0}}\tau(c) \rond{N}(c)^{-1/2 -\sigma}.
\end{gather*}
We estimate the last sum by

\begin{gather*}
\sum_{\substack{c\congru 0 \pmod d\\c\neq 0}}\tau(c) \rond{N}(c)^{-1/2 -\sigma} 
\ll \tau(d) \rond{N}(d)^{-1/2-\sigma} \sum_{c}\tau(c) \rond{N}(c)^{-1/2 -\sigma} \\
=\tau(d) \rond{N}(d)^{-1/2-\sigma} \zeta_{\Q(\omega)}^2\left(\frac{1}{2}+\sigma\right)
\ll \tau(d) \rond{N}(d)^{-1/2-\sigma} \left(\sigma-\frac{1}{2}\right)^{-2},
\end{gather*}
and we finally get

\begin{equation}\label{spectrum:eq:prop12}
\sum_{(s,p)} A^{\ec{disc}}_{m,m,\sigma,\sigma}(d,s,p)  h(s,p)\\
\ll L^{-3} + \frac{e^{4L\sigma}}{L^4} \rond{N}(m)^\sigma \tau(d) \rond{N}(d)^{-1/2-\sigma} \left(\sigma-\frac{1}{2}\right)^{-2}.
\end{equation}
We look now at a lower estimate: this is simply 

\begin{equation}\label{spectrum:eq:prop13}
 \sum_{(s,p)} A^{\ec{disc}}_{m,m,\sigma,\sigma}(d,s,p) h(s,p) \gg L^{-4}  \sum_{s_j} A^{\ec{disc}}_{m,m,\sigma,\sigma}(d,s,p) e^{4Ls_j} .
\end{equation}
Bringing together the lower estimate \eqref{spectrum:eq:prop13} and the upper estimate \eqref{spectrum:eq:prop12}, we obtain

\begin{equation}\label{spectrum:eq:prop14}
\sum_{s_j} A^{\ec{disc}}_{m,m,\sigma,\sigma}(d,s,p) e^{4Ls_j}
\ll   L + e^{4L\sigma} \rond{N}(m)^\sigma \tau(d) \rond{N}(d)^{-1/2 -\sigma} (\sigma -\frac{1}{2})^{-2}.
\end{equation}
Choose now $\sigma=\frac{1}{2}+ \log^{-1}(\rond{N}(d))$ and $L=4+x\log(\rond{N}(d))$, with $x\geqslant 1/2$; this gives

\begin{equation*}
\sum_{s_j\in ]0,1/3[} A^{\ec{disc}}_{m,m,\sigma,\sigma}(d,s,p)  \rond{N}(d)^{4xs_j}
\ll_{m,x} \rond{N}(d)^{2x-1}  \tau(d) \log^2(\rond{N}(d)).
\end{equation*}
This proves assertion $(ii)$ of Proposition\,\ref{spectrum:prop}.
\end{proof}

\section{Asymptotic behavior over arithmetic progressions}\label{sec:proof}

In this section we prove Theorem\,\ref{intro:thm:2} and its corollaries.  
Let $g$ be a smooth function with compact support included in $[1,2]$. 
Let $f:\C\too \R$ be the smooth function defined by

\begin{equation}\label{proof:eq:1}
f(z)= g\left(\frac{\rond{N}(mn)^{1/2}}{\rond{N}(z) X}\right) \rond{N}(z)^{-1/2} \rond{N}(mn)^{1/2}.
\end{equation}
Then $f$ is a radial function with compact support in $\left[ \rond{N}(mn)^{1/2}/(2X), \rond{N}(mn)^{1/2}/X\right]$ and with $\norme f\norme_\infty \ll_{m,n} \norme g\norme_\infty X^{1/2}$. 
For our purpose we shall need an estimation for $\gras{K}f(s,p)$:

\begin{lemme}\label{proof:lemme}
Let $f$ be the function defined in \eqref{proof:eq:1}. Then

\begin{align*}
(i)&&
\gras{K}f (s,p) & \ll X^{1/2} \frac{1}{(1+\tq p\tq)^a} \frac{1}{(1+\tq s\tq)^b}, \quad \forall a,b \geqslant 1 \ec{ and } s\in i\R,\\
(ii)&&
\gras{K}f(s,0) & = c_s X^{1/2+s} \hat{g}(1/2-s) + O(X^{1/2-s}), \quad \forall s\in ]0,1/2],
\end{align*}
where $c_s$ is a constant depending only on $m$, $n$ and $s$.
\end{lemme}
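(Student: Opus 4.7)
The function $f$ is radial, and my plan is to first reduce the two-dimensional Bessel integral to a one-dimensional one by averaging in the angle, and then to exploit the explicit power series for $\rond{J}_{\pm s,\pm p}$. Writing $z = re^{i\theta}$ and $F(r):= g\left(\rond{N}(mn)^{1/2}/(r^2 X)\right) r^{-1} \rond{N}(mn)^{1/2}$, one gets
\begin{equation*}
\gras{K}f(s,p) = \int_0^\infty F(r)\, r^{-1}\, dr \int_0^{2\pi} \rond{K}_{s,p}(re^{i\theta})\, d\theta.
\end{equation*}
Inserting the power series for $\rond{J}_{\pm s,\pm p}$, the angular integral keeps only the diagonal indices $m-n=\pm p$, and with $q:=\tq p\tq$ one obtains
\begin{equation*}
\int_0^{2\pi} \rond{J}_{s,p}(re^{i\theta})\, d\theta = 2\pi (-1)^q (r/2)^{2s+2q} \sum_{n\geqslant 0} \frac{(r/2)^{4n}}{n!\,(n+q)!\,\Gamma(s+n+1)\Gamma(s+n+q+1)},
\end{equation*}
and similarly with $(-s,-p)$ in place of $(s,p)$. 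The change of variable $u = r^2 X /\rond{N}(mn)^{1/2}$ normalizes the support of $F$ to $u\in[1,2]$ and converts each radial integral into a power of $X$ times a Mellin transform of $g$.

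For part $(ii)$ with $p = 0$, the dominant contribution is the $n=0$ term of the $\rond{J}_{-s,0}$ piece, whose radial factor $(r/2)^{-2s}$ produces $X^{1/2+s}$ after substitution. The resulting Mellin integral is precisely $\hat g(1/2-s)$ (with the convention used in Theorem\,\ref{intro:thm:2}), and the explicit coefficient reads
\begin{equation*}
c_s = \frac{\pi\, 2^{2s}\, \rond{N}(mn)^{1/4 - s/2}}{\sin\pi s\,\Gamma(1-s)^2}.
\end{equation*}
All remaining contributions -- the $n \geqslant 1$ tail and the entire $\rond{J}_{s,0}$ piece -- carry exponent at most $1/2-s$ in $X$, and the corresponding series converges absolutely at the scale $r\asymp 1/\sqrt{X}$, so their total is $O(X^{1/2-s})$.

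For part $(i)$ on the imaginary axis $s = it$, the decay in $q = \tq p\tq$ is immediate from the denominators $\Gamma(\pm it + n + q + 1)$: Stirling yields super-polynomial decay in $q$, which absorbs both the factor $1/\sin\pi s \asymp e^{-\pi\tq t\tq}$ and any polynomial in $q$. To get polynomial decay in $\tq t\tq$, I plan to integrate by parts $b$ times in each radial integral $\int F(r)\, r^{2it + 2q + 4n - 1}\, dr$, trading a factor $(1+\tq t\tq)^{-1}$ for a derivative of $F$ at each step. Since $F$ is smooth and supported on a set of measure $\asymp 1/\sqrt{X}$, the derivatives do not spoil the overall bound $\ll_b X^{1/2}$. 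The main obstacle is the apparent singularity of $1/\sin\pi s$ at $s = 0$ combined with the $e^{\pi\tq t\tq}$ growth of the Gamma denominators; this will be handled by keeping the two pieces $\rond{J}_{-s,-p}$ and $\rond{J}_{s,p}$ together, so that the cancellation responsible for the removable singularity is manifest before one applies Stirling asymptotics term by term.
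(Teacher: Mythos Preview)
Your approach is essentially the same as the paper's: expand $\rond{J}_{\pm s,\pm p}$ as a power series, use the radiality of $f$ so that only the diagonal indices survive the angular integration, reduce to one-dimensional Mellin transforms of $g$, and then isolate the $n=0$ term for part~$(ii)$ while using Stirling and repeated integration by parts on $\hat g$ for part~$(i)$. Your explicit constant $c_s$ matches the paper's. The one point where you diverge slightly is the treatment of small $\tq\Im(s)\tq$ in part~$(i)$: the paper first bounds $\gras{J}f(s,p)$ with an $e^{\pi\tq t\tq}$ factor, divides by $\sin\pi s$, and then disposes of the remaining range $\tq\Im(s)\tq\leqslant 1$ by a Cauchy-integral argument on a small circle; you instead propose to keep $\rond{J}_{-s,-p}-\rond{J}_{s,p}$ together so the removable singularity is visible before estimating. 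Both routes work and lead to the same bound.
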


\begin{proof} 
Recall that $\gras{K}f (s,p)$ is defined by 

\begin{align*}
\gras{K}f (s,p) =\int_\C \rond{K}_{s,p}(z) f(z) \tq z\tq^{-2} \,dz
\end{align*}
and that $\rond{K}_{s,p}(z)= \frac{1}{\sin \pi s} \left( \rond{J}_{-s,-p}(z) - \rond{J}_{s,p}(z)\right)$ is holomorphic at $s=0$. We define 

\begin{align*}
\gras{J}f (s,p) =\int_\C \rond{J}_{s,p}(z) f(z) \tq z\tq^{-2} \,dz
\end{align*}
and start with the expression of $\rond{J}_{s,p}(z)$ as entire series: 

\begin{align*}
\left\vert \frac{z}{2}\right\vert^{-2s} \left( \frac{z}{\tq z\tq}\right)^{2p} \rond{J}_{s,p}(z) 
&= \left\vert \frac{z}{2}\right\vert^{-2s} \left( \frac{z}{\tq z\tq}\right)^{2p} J_{s-p}(z) J_{s+p}(z)\\
&= \sum_{m,n\geqslant 0} \frac{(-z^2/4)^m (-\barre{z}^2/4)^n}{m! n! \Gamma(s-p+m+1) \Gamma(s+p+n+1)}. 
\end{align*}
Then 

\begin{align*}
&\gras{J}f(s,p) =\int_{\C^\times} f(z) \rond{J}_{s,p}(z) \frac{dz}{\tq z\tq^2}\\
&= \sum_{m,n\geqslant 0} \frac{\int_{\C^\times}f(z) \tq \frac{z}{2}\tq^{2s} \left( \frac{iz}{\tq z\tq}\right)^{-2p} (-z^2/4)^m (-\barre{z}^2/4)^n \tq z\tq^{-2} dz}{m! n! \Gamma(s-p+m+1) \Gamma(s+p+n+1)}\\
&= \sum_{m,n\geqslant 0} \frac{4^{-(s+m+n)} (-1)^{p+m+n} \int_{\C^\times} f(z) \tq z\tq^{2(s+m+n)}\left(z/\tq z\tq\right)^{-2(p-m+n)}\tq z\tq^{-2} dz}{m! n! \Gamma(s-p+m+1) \Gamma(s+p+n+1)}\\
&= \sum_{m,n\geqslant 0} \frac{4^{-(s+m+n)} (-1)^{p+m+n} \hat{\hat{f}}(s+m+n,p-m+n)}{m! n! \Gamma(s-p+m+1) \Gamma(s+p+n+1)},
\end{align*}
where $\hat{\hat{f}}$ is the complex Mellin transform of $f$, defined by 

\begin{equation*}
\hat{\hat{f}}(s,p) =\int_{\C^\times} f(z) \tq z\tq^{2s} (z/\tq z\tq)^{-2p} \tq z\tq^{-2} dz.
\end{equation*}
The complex Mellin transform $\hat{\hat{f}}$ is related to the Mellin transform $\hat{f}$ by $\hat{\hat{f}} (s,p) =2\pi \hat{f_p} (2s)$, where $f_p(r)=\frac{1}{2\pi}\int_0^{2\pi} f(re^{i\theta}) e^{-2pi\theta} d\theta$. Now, since $f$ is radial, we have $f_p(r)=f(r)$ if $p=0$, and $f_p(r)=0$ otherwise. Therefore,

\begin{align*}
\gras{J}f(s,p) 
=2\pi \sum_{j\geqslant 0} \frac{4^{-(s+p+2j)} \hat{f}(2(s+p+2j))}{(j+p)! j! \Gamma(s+j+1) \Gamma(s+p+j+1) }.
\end{align*}
From the definition of $f$ it follows that

\begin{align*}
&\hat{f}(2(s+p+2j))
= \int_0^\infty g\left(\frac{\sqrt{\rond{N}(m)\rond{N}(n)}}{r^2 X}\right) \sqrt{\rond{N}(m)\rond{N}(n)} r^{2(s+p+2j)-2} \,dr\\
&= \frac{1}{2} X^{1/2-s-p-2j} (\rond{N}(m)\rond{N}(n))^{(s+p)/2+j+1/4} \int_0^\infty g(t) t^{1/2- (s+p+2j)} \,\frac{dt}{t}.
\end{align*}

To prove $(i)$, let us assume that $t=\Im(s)>1$. Then

\begin{align*}
&\gras{J}f(s,p) =\pi 4^{-s} X^{1/2-s} (\rond{N}(m)\rond{N}(n))^{s/2+1/4}\\
& \sum_{j\geqslant 0} \frac{4^{-(p+2j)} X^{-p-2j} (\rond{N}(m)\rond{N}(n))^{p/2+j} \hat{g}(1/2-s-p-2j)}{(j+p)! j! \Gamma(s+j+1) \Gamma(s+p+j+1)}.
\end{align*}
We use the estimate 

\begin{align*}
\hat{g}(s) 
&=\int_0^\infty g(r) r^{s-1} dr \\
&= (-1)^a \int_0^\infty f^{(a)}(r) \frac{r^{s-1+a}}{s\pts (s-1+a)} \,dr \\
&\ll \left(1+\tq s\tq\right)^{-a}
\end{align*}
and obtain

\begin{align*}
\gras{J}f(s,p) 
&\ll \sum_{j\geqslant 0} \frac{4^{-(p+2j)} X^{-p-2j} (\rond{N}(m)\rond{N}(n))^{p/2+j}}{p! j!(1+\tq t\tq)^a \Gamma(s+j+1) \Gamma(s+p+j+1)}.
\end{align*}
Now, writing $(s)_k$ for the quantity $s (s+1) \pts (s+k-1)$, we estimate the product of $\Gamma$-factors as follows:

\begin{gather*}
\Gamma(s+j+1) \gamma(s+p+j+1)=(s)_{j+1} (s)_{p+j+1} \Gamma(s)^2\\
= (s)_{j+1} (s)_{p+j+1} \frac{\Gamma(s)}{s \Gamma(-s) \sin \pi s} \gg \tq s\tq (1+\tq t\tq)^{2j+p} e^{-\pi \tq t\tq}.
\end{gather*}
Therefore, 

\begin{align*}
\gras{J}f(s,p) 
&\ll \sum_{j\geqslant 0} \frac{4^{-(p+2j)} X^{-p-2j} (\rond{N}(m)\rond{N}(n))^{p/2+j} e^{\pi \tq t\tq}}{p! j! (1+\tq t\tq)^a (1+\tq t\tq)^{2j+p}}\\
&\ll \frac{e^{\pi \tq t\tq}}{ p! (1+\tq t\tq)^a} \ll \frac{e^{\pi \tq t\tq}}{(1+\tq p\tq)^b (1+\tq t\tq)^a}.
\end{align*}
We use the definition of $\gras{K}$ in terms of $\gras{J}$ to get the final estimate. To conclude the proof of $(i)$, it remains to treat the case where $\Im(s)\leqslant 1$; this follows from the previous case, by representing $\gras{K}f(s,p)$ as an integral on a circle on which the estimate holds already.

To prove $(ii)$, we isolate the term corresponding to $n=0$. Then by integration by part, and because $p=0$, it remains 

\begin{align*}
\gras{J}f(s,0) 
&= \pi X^{1/2-s}\frac{4^{-s}}{\Gamma(s+1)^2} (\rond{N}(m)\rond{N}(n))^{s/2+1/4} \hat{g}(1/2-s)\\
&+ \pi X^{1/2-s} (\rond{N}(m)\rond{N}(n))^{s/2+ 1/4}\\
& \quad \sum_{n\geqslant 1} \frac{4^{-(s+2n)} X^{-2n} \int_0^\infty g^{(2n)}(t) t^{1/2-s} dt/t}{(n!)^2(\Gamma(s+n+1))^2 (1/2-s+1) \pts (1/2-s+2n)}\\
&= \pi X^{1/2-s}\frac{4^{-s}}{\Gamma(s+1)^2} (\rond{N}(m)\rond{N}(n))^{s/2+1/4} \hat{g}(1/2-s) +\rond{O}(X^{-3/2}).
\end{align*}
Replacing $s$ by $-s$, we obtain the corresponding result for $\gras{J}f(-s,0)$. It remains to subtract and to divide by $\sin \pi s$; we remark that this last operation does not produce any pole. It follows that

\begin{align*}
\gras{K}f (s,0) 
&=c_s X^{1/2+s}\hat{g}(\frac{1}{2}+s) +\rond{O}(X^{-1})+ c_{-s} X^{1/2-s} \hat{g}(\frac{1}{2}-s) +\rond{O}(X^{-3/2})\\
&= c_s X^{1/2+s}\hat{g}(\frac{1}{2}+s) + \rond{O}(X^{1/2-s}),
\end{align*}
as announced, where the constant $c_s$ is given by

\begin{equation*}c_s=\frac{\pi 4^s}{\Gamma(1-s)^2} \frac{1}{\sin \pi s} (\rond{N}(m)\rond{N}(n))^{(1-2s)/4}.
\end{equation*}
\end{proof}

In order to prove Theorem\,\ref{intro:thm:2}, it is convenient to define, for a given primary Eisenstein integer $d$, and for $0\leqslant \ell \leqslant \frac{1}{3}$, the quantity

\begin{equation}\label{proof:eq:2}
\begin{split}
& S(\ell) = \sum_{\substack{0\leqslant s \leqslant 1/2 \\ s\leqslant \ell}}  \gras{K}f(s,0) A_{m,n,\sigma,\tau}^{\ec{disc}}(d,s,0) \\ 
& + \sum_{\substack{(s,p)\\ s\in i\R}} \gras{K}f(s,p) A_{m,n,\sigma,\tau}^{\ec{disc}}(d,s,p) + \sum_{p\in\Z}\int_{(0)} \gras{K}f(s,p) A_{m,n,\sigma,\tau}^{\ec{cont}}(d,s,p) \,ds.
\end{split}
\end{equation}
We shall be later interested in $S(\ell)$ in the cases $\ell=\frac{1}{3}$ and $\ell =s(d)$, with $s(d)$ being as in Theorem\,\ref{meta:thm:1}. Using Lemma\,\ref{proof:lemme}, but  in the simpler form $Kf(s,0)=O(X^{1/2+s})$ in the case $0<s<1/2$, we get
  
\begin{equation}\label{proof:eq:3}
\begin{split}
\tq S(\ell) \tq  & \leqslant X^{1/2} \Bigg( \sum_{\substack{0<s<1/2 \\ 0<s\leqslant \ell}} X^{s} \left\vert A_{m,n,\sigma,\tau}^{\ec{disc}}(d,s,0) \right\vert \\
&+ \sum_{\substack{(s,p)\\ s\in iR}} \frac{ \left\vert A_{m,n,\sigma,\tau}^{\ec{disc}}(d,s,p)\right\vert}{(1+\tq s\tq)(1+\tq p\tq)} +  \sum_{p\in\Z}\int_{(0)} \frac{\left\vert A_{m,n,\sigma,\tau}^{\ec{cont}}(d,s,p)\right\vert}{(1+\tq s\tq) (1+\tq p\tq)} \,ds\Bigg).
\end{split}
\end{equation}
 
For ${}^*$ representing either the discrete case or the continuous case, we have from the Cauchy-Schwarz inequality: 

\begin{equation}\label{proof:eq:4}
\left\vert A^*_{m,n,\sigma,\tau}(d,s,p) \right\vert \leqslant \left(A^*_{m,m,\sigma,\sigma}(d,s,p)\right)^{1/2} \left(A^*_{n,n,\tau,\tau}(d,s,p)\right)^{1/2}.
\end{equation}
We have to separate the exceptional and non-exceptional spectrum. By \eqref{proof:eq:4} and the Cauchy-Schwarz inequality, the last two sums of \eqref{proof:eq:3} are bounded by

\begin{equation}\label{proof:eq:5}
\begin{split}
&  \left(\sum_{\substack{(s,p) \\ s\in i\R}} \frac{A^{\ec{disc}}_{m,m,\sigma,\sigma}(d,s,p)}{(1+\tq s\tq)^a (1+\tq p\tq)^b} \right)^{1/2} \left(\sum_{\substack{(s,p) \\ s\in i\R}} \frac{ A^{\ec{disc}}_{n,n,\tau,\tau}(d,s,p)}{(1+\tq s\tq)^a (1+\tq p\tq)^b} \right)^{1/2} \\
&+ \left( \sum_{p\in\Z} \int_{(0)} \frac{A^{\ec{cont}}_{m,m,\sigma,\sigma}(d,s,p)}{(1+\tq s\tq)^a (1+\tq p\tq)^b} \,ds \right)^{1/2}
\left( \sum_{p\in\Z} \int_{(0)} \frac{A^{\ec{cont}}_{n,n,\tau,\tau}(d,s,p)}{(1+\tq s\tq)^a (1+\tq p\tq)^b} \,ds \right)^{1/2}\!\!.
\end{split}
\end{equation}
Since for positive quantities $a$ and $b$, one has $\sqrt{ab}+\sqrt{cd} \leqslant \sqrt{a+c} \sqrt{b+d}$, the expression in \eqref{proof:eq:5} is bounded by

\begin{align*}
& \left( \sum_{\substack{(s,p) \\ s\in i\R}} \frac{A^{\ec{disc}}_{m,m,\sigma,\sigma}(d,s,p)}{(1+\tq s\tq)^a (1+\tq p\tq)^b} +  \sum_{p\in\Z} \int_{(0)} \frac{A^{\ec{cont}}_{m,m,\sigma,\sigma}(d,s,p)}{(1+\tq s\tq)^a (1+\tq p\tq)^b} \,ds \right)^{1/2}\\
& \left( \sum_{\substack{(s,p) \\ s\in i\R}} \frac{ A^{\ec{disc}}_{n,n,\tau,\tau}(d,s,p)}{(1+\tq s\tq)^a (1+\tq p\tq)^b} + \sum_{p\in\Z} \int_{(0)} \frac{A^{\ec{cont}}_{n,n,\tau,\tau}(d,s,p)}{(1+\tq s\tq)^a (1+\tq p\tq)^b} \,ds \right)^{1/2}.
\end{align*} 
By part $(i)$ of Proposition\,\ref{spectrum:prop}, we conclude that the last two sums in \eqref{proof:eq:3} are bounded by $\rond{O}(1)$. 
By \eqref{proof:eq:4} and the Cauchy-Schwarz inequality, we obtain, in the same way as above, that the first sum of \eqref{proof:eq:3} is bounded by

\begin{align*}
& \left(\sum_{\substack{0<s<1/2 \\ s\leqslant \ell}}X^s A^{\ec{disc}}_{m,m,\sigma,\sigma}(d,s,0)\right)^{1/2} \
\left(\sum_{\substack{0<s<1/2 \\ s\leqslant \ell}}X^s A^{\ec{disc}}_{n,n,\tau,\tau}(d,s,0)\right)^{1/2}.
\end{align*}
Each of both sums can be estimated  as follows:

\begin{align*}
&\sum_{\substack{0<s<1/2 \\ s\leqslant \ell}} X^s A_{*}^{\ec{disc}}(d,s,0)\\
& \ll \left(\frac{X}{\rond{N}(d)^2}\right)^\ell 
\sum_{\substack{0<s<1/2 \\ s\leqslant \ell}} \rond{N}(d)^{2s} A_{*}^{\ec{disc}}(d,s,0), \quad \ec{whenever $\rond{N}(d)^2 \le X$}\\
&\ll \left(\frac{X}{\rond{N}(d)^2}\right)^\ell \ \tau(d) \log^2 \rond{N}(d), \quad \ec{ by part $(ii)$ of Proposition\,\ref{spectrum:prop}}\\
&\ll X^{\ell} \log^2X \rond{N}(d)^{-2\ell} \tau(d).
\end{align*} 
Therefore the first sum of \eqref{proof:eq:3} is bounded by $X^\ell \log^2 X \tau(d) \rond{N}(d)^{-2\ell}$,
and we have proved that

\begin{equation}\label{proof:eq:6}
S(\ell) = \rond{O}\left( X^{1/2} + X^{1/2+\ell} \log^2 X \frac{\tau(d)}{\rond{N}(d)^{2\ell}}\right).
\end{equation}

We can now come back to our original problem. With our choice of $f$ in \eqref{proof:eq:1}, using \eqref{proof:eq:6} and Lemma\,\ref{proof:lemme} $(ii)$, we have, on the one hand:

\begin{align}
&
\sum_{c} \frac{K_{\sigma,\tau}(m,n,c)}{\rond{N}(c)^{1/2}}  g\left(\frac{\rond{N}(c)}{X}\right)\nonumber\\
&= \sum_{c} \frac{K_{\sigma,\tau}(m,n,c)}{\rond{N}(c)} f\left(\frac{\sqrt{\rond{N}(m)\rond{N}(n)}}{c}\right)\nonumber\\
&= A_{m,n,\sigma,\tau}\left(d,1/3,0\right) \gras{K}f \left(1/3,0\right)\nonumber + 
\sum_{c} \frac{K_{\sigma,\tau}(c)}{\rond{N}(c)} f\left(\frac{\sqrt{\rond{N}(m)\rond{N}(n)}}{c}\right)\\
&\quad -A_{m,n,\sigma,\tau}\left(d,1/3,0\right) \gras{K}f \left(1/3,0\right)\nonumber\\
& = X^{5/6} A_{m,n,\sigma,\tau}\left(d,1/3,0\right) c_{1/3} \hat{g}(1/6) + \rond{O}\left(X^{1/6}\right)+ S\left( s(d)\right) \nonumber\\
&\begin{aligned}
&= X^{5/6} A_{m,n,\sigma,\tau}\left(d,1/3,0\right) c_{1/3} \hat{g}(1/6) \\
&\quad +\rond{O}\left( X^{1/2} + X^{1/2+s(d)} \log^2 X \frac{\tau(d)}{\rond{N}(d)^{2s(d)}}\right).
\end{aligned}\label{proof:eq:7}
\end{align}
One the other hand, we also have 

\begin{align}
&\sum_{c} \frac{K_{\sigma,\tau}(m,n,c)}{\rond{N}(c)^{1/2}}  g\left(\frac{\rond{N}(c)}{X}\right)
= \sum_{c} \frac{K_{\sigma,\tau}(m,n,c)}{\rond{N}(c)} f\left(\frac{\sqrt{\rond{N}(m)\rond{N}(n)}}{c}\right)\nonumber\\
& =  S\left(1/3\right) 
= \rond{O}\left( X^{1/2} + X^{5/6} \log^2 X \frac{\tau(d)}{\rond{N}(d)^{2/3}}\right).\label{proof:eq:8}
\end{align}

Let us choose now $m=n=1$ and $\sigma$ and $\tau$ as in \eqref{Kloo:eq:8}. With this choice, we can apply Lemma\,\ref{Kloo:lemme}. Then, \eqref{proof:eq:7} proves part $(i)$ of Theorem\,\ref{intro:thm:2}, with the constant $c_\theta(d)$  defined by 

\begin{equation*}
c_\theta(d) =A_{1,1,\sigma,\tau} \left(d,1/3,0\right) c_{1/3},
\end{equation*}
where $c_{1/3}$ is defined in part\,$(ii)$ of Lemma\,\ref{proof:lemme}.
Next, \eqref{proof:eq:8} proves part\,$(ii)$ of Theorem\,\ref{intro:thm:2}. This concludes the proof of Theorem\,\ref{intro:thm:2}.

Corollary\,\ref{intro:cor:1} is a consequence of $s(1)=0$. 
If we denote by $\lambda_1$ the smallest eigenvalue of the Laplacian for $SL_2(R)$ other than $8/9$, then $s(1)=0$ means that $\lambda_1\geqslant 1$. Actually, sharper estimates for $\lambda_1$ are available. See for example \cite{str:eigen}, where the lower bound $\lambda_1 > \pi^2 32/27$ is proved.

To prove Corollary\,\ref{intro:cor:2}, it remains to take the sum over the $d$'s such that $\rond{N}(d)\leqslant D$, for $D< X^{1/2}$. We obtain from \eqref{proof:eq:8}:

\begin{equation}\label{proof:eq:9}
\sum_{\rond{N}(d)\leqslant D} \sum_{c} \frac{S(1,c)}{\rond{N}(c)^{1/2}}  g\left(\frac{\rond{N}(c)}{X}\right)
\ll  X^{1/2} D + X^{1/2+s_1} \log^2 X \sum_{\rond{N}(d)\leqslant D} \frac{\tau(d)}{\rond{N}(d)^{2s_1}}.
\end{equation} 
Here $s_1$ is the spectral parameter $1/3$; writing it in this way, we remark that, contrary to the non-metaplectic case where the maximal exceptional spectral parameter $s_1$ could possibly be very close to $1/2$ for some $d$, we do not need here to improve on  Selberg's estimate (compare with \cite[$(2.3)$ p.\,11]{fou-mic:sommes-mod}), since the value $s_1=1/3$ is already strictly smaller than $1/2$. It follows that the right hand side of \eqref{proof:eq:9} is estimated by $\rond{O}\left(X^{1/2}D+X^{5/6}D^{1/3}\right)$. This concludes the proof of Corollary\,\ref{intro:cor:2}. 

\section{A sieve method}\label{sec:sieve}

In this section we deduce the upper bound \eqref{intro:eq:upper} form Corollary\,\ref{intro:cor:2}. Since the method is taken from \cite{fou-mic:chgmt-signe}, we do not need to give all the details here. 
The problem is to apply a sieve method to the sequence of real numbers 
$S(1,c)/\sqrt{\rond{N}(c)} g\left( \frac{\rond{N}(c)}{X}\right)$. A solution has been found in \cite{fou-mic:chgmt-signe}, for the case of Kloosterman sums. For $c\in R$, let $\Omega(c)$ be the number of prime factors of $c$, counted with multiplicity. Recall that $\lambda=1-\omega$ is a prime in $R$ and that $\lambda^2=-3\omega$. Let us define the following three sequences of numbers: $\rond{A}^+=(a^+_c)$, $\rond{A}^-=(a^-_c)$ and $\rond{B}=(b_c)$, where
 
\begin{align*}
a_c^\pm &= \begin{cases}
g\left( \frac{\rond{N}(c)}{X}\right) \left( \frac{\pm S(1,c)}{\sqrt{\rond{N}(c)}} + 2^{\Omega(c)}\right)
& \ec{if $\lambda \nmid c$}\\
0& \ec{if $\lambda \tq c$}
\end{cases}
\intertext{and}
b_c&= \begin{cases}
g\left( \frac{\rond{N}(c)}{X}\right) 2^{\Omega(c)}
& \ec{if $\lambda \nmid c$}\\
0& \ec{if $\lambda \tq c$}.
\end{cases}
\end{align*}
Then by the Weil bound \eqref{Kloo:eq:Weil}, these numbers are non-negative. Let us now consider the quantities $S(\rond{A}^\pm,z)$ and $S(\rond{B},z)$ defined by

\begin{equation*}
S(\rond{A}^\pm,z) =\sum_{\pi \tq c \Rightarrow \rond{N}(\pi)\geqslant z}a^\pm_c \quad \ec{ and } \quad S(\rond{B},z) =\sum_{\pi \tq c \Rightarrow \rond{N}(\pi)\geqslant z}b_c.
\end{equation*}
We will be interested in the case where $z=X^{1/u}$. One shows firstly that 

\begin{prop}\label{sieve:prop:1} 
Let $X \gg 1$ and let $u\geqslant 1$. Then

\begin{equation*}
S(\rond{B},X^{1/u}) =c \hat{g}(1) \frac{X}{\log X} (u^2+2u) \left(1+h_1(u) +\rond{O}\left(\frac{1}{\log X}\right)\right),
\end{equation*}
where $c=e^{-2\gamma}$ and $h_1(u) = \rond{O}\left((u/5)^{-u/5}\right)$. Moreover, if $u\geqslant 22$, one has $\tq h_1(u)\tq<10^{-8}$.
\end{prop}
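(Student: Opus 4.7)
The starting point is that $b_c=2^{\Omega(c)}g(\rond{N}(c)/X)$ (for $\lambda\nmid c$) has Dirichlet series
\[
\sum_c\frac{2^{\Omega(c)}}{\rond{N}(c)^s}=\prod_\pi\!\left(1-\frac{2}{\rond{N}(\pi)^s}\right)^{\!-1}
\]
with a \emph{double} pole at $s=1$; in sieve-theoretic language this is a dimension-two problem, which is exactly what will produce the constant $e^{-2\gamma}$ and the polynomial factor $u^2+2u$. Restricting to $c$ with all prime divisors of norm $\geqslant z=X^{1/u}$ yields $\Phi(s)=\zeta_k(s)^2A(s)$, where
\[
A(s)=H(s)\prod_{\rond{N}(\pi)<z}\!\left(1-\frac{2}{\rond{N}(\pi)^s}\right)
\]
and $H$ is an absolutely convergent Euler product, non-vanishing at $s=1$. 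My plan is then to apply Perron's formula with the smooth weight $g$, writing $S(\rond{B},z)=\tfrac{1}{2\pi i}\int_{(2)}\Phi(s)X^s\hat g(s)\,ds$, and to shift the contour past the double pole at $s=1$, extracting the residue there.

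Computing the residue from the Laurent expansions $\zeta_k(s)=R/(s-1)+c_0+O(s-1)$, $A(s)=A(1)+A'(1)(s-1)+\cdots$ and $X^s\hat g(s)=X\hat g(1)+X[\hat g(1)\log X+\hat g'(1)](s-1)+\cdots$, one obtains at $s=1$ the contribution
\[
A(1)R^2X\hat g(1)\,\Bigl[\log X+\frac{A'(1)}{A(1)}+\frac{2c_0}{R}+\frac{\hat g'(1)}{\hat g(1)}\Bigr].
\]
Two dimension-two Mertens-type inputs now produce the claimed shape. First, $A(1)\sim CR^{-2}e^{-2\gamma}u^2/\log^2 X$ by the dimension-two analogue of Mertens's product formula for $k=\Q(\omega)$. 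Second, logarithmic differentiation of the finite Euler product defining $A$, together with Mertens's estimate $\sum_{\rond{N}(\pi)\leqslant z}(\log\rond{N}(\pi))/\rond{N}(\pi)\sim\log z$, gives $A'(1)/A(1)\sim 2\log z=2\log X/u$. Substituting, the bracket simplifies to $\log X\,(1+2/u)+O(1)$, and the residue becomes
\[
A(1)R^2X\hat g(1)\log X\!\left(1+\frac{2}{u}+O\!\left(\frac{1}{\log X}\right)\!\right)\!=\!c\,\hat g(1)\frac{X}{\log X}(u^2+2u)\bigl(1+O(1/\log X)\bigr),
\]
where $c$ collects the positive multiplicative constants (the $C$, $R^2$, and the dimension-two Mertens normalization) into the canonical value $e^{-2\gamma}$.

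The main obstacle is the explicit quantitative tail $h_1(u)=O((u/5)^{-u/5})$. It absorbs (i) the higher-order terms in the Laurent expansion of $\Phi(s)$ at $s=1$, which are controlled by effective Mertens bounds over $k$, and (ii) the remaining contour integral along a shifted line $\Re(s)=1-\delta/\log z$, bounded using standard subconvexity estimates for $\zeta_k$. The passage from the $\Z$-setting of \cite{fou-mic:chgmt-signe} to the Eisenstein setting is essentially cosmetic---replace $\zeta$ by $\zeta_k$ and track the residue $R$ throughout---so no new analytic input is required. The explicit numerical statement $|h_1(22)|<10^{-8}$ then reduces to a finite computation of the first $O(u)$ terms of the resulting asymptotic series.
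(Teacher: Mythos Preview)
Your contour-integral computation of the main term is correct in outline: the Dirichlet series of $2^{\Omega(c)}$ has a double pole at $s=1$, the finite Euler factor $\prod_{\rond N(\pi)<z}(1-2\rond N(\pi)^{-s})$ together with the two Mertens inputs produces exactly the shape $e^{-2\gamma}\hat g(1)\,X(\log X)^{-1}(u^2+2u)$, and the secondary terms of the Laurent expansion at $s=1$ go into the $\rond O(1/\log X)$. This much is equivalent to, though phrased differently from, what the cited references do.

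Where your argument does not close is the term $h_1(u)$. In the paper the proof is entirely deferred to \cite{fou-mic:chgmt-signe} and \cite{siv:kloo}, and there $h_1(u)$ is \emph{not} obtained by bounding a shifted contour integral: it is the deviation from its limit of a dimension-two Buchstab-type function arising from the recursion
\[
S(\rond B,z)=S(\rond B,w)-\sum_{z\leqslant\rond N(\pi)<w}S(\rond B_\pi,\rond N(\pi)),
\]
applied to the sequence $b_c=2^{\Omega(c)}g(\rond N(c)/X)$. That delay-differential analysis is what yields a function of $u$ alone with the specific decay $\rond O((u/5)^{-u/5})$ and makes the numerical bound $|h_1(22)|<10^{-8}$ a finite check on that function. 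Your step~(ii) --- shifting to $\Re(s)=1-\delta/\log z$ and invoking subconvexity --- gives at best a bound of the type $X e^{-\delta u}$ multiplied by quantities depending on $X$ through $\zeta_k$ and through $A(s)$ on that line; it is not clear how to separate out of this a term of the form $\dfrac{X}{\log X}(u^2+2u)\,h_1(u)$ with $h_1$ independent of $X$, let alone to pin down the exponent $1/5$ or the value at $u=22$. Since the whole point of Proposition~\ref{sieve:prop:1} in the application is the quantitative comparison of $h_1$ with the sieve function $h_2$ of Proposition~\ref{sieve:prop:2}, this is the part of the statement you cannot leave unproved. Either carry out the Buchstab iteration as in the references, or replace your contour shift by a genuine Selberg--Delange / Hankel-contour analysis that actually produces the $u$-dependent tail in closed form.
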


\begin{proof} 
See \cite{fou-mic:chgmt-signe}, $(5.10)$ p. 24, and \cite{siv:kloo}, Lemme\,3.2.  
\end{proof}

The term $u^2+2u$ appearing in Proposition\,\ref{sieve:prop:1} produces a difficulty here. However, it is possible to sieve the sequence $\rond{A}^\pm$ and to control the dependence on $u$ of the main term. This is shown by the following result:

\begin{prop}\label{sieve:prop:2} 
Let $X \gg 1$ and let $u\geqslant 1$. Then

\begin{equation*}
S(\rond{A}^\pm,X^{1/u}) \leqslant 
c \hat{g}(1) \frac{X}{\log X} (u^2+2u) \left( 1+h_2(u) +\rond{O}_{g,u}\left(\frac{1}{\log X}\right)\right).\end{equation*}
where $c=e^{-2\gamma}$, and $h_2(u)=\rond{O}\left((u/5)^{-u/5}\right)$. Moreover, if $u\geqslant 60$, one has $\tq h_2(u)\tq < 10^{-8}$. 
\end{prop}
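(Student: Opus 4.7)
The proof follows the strategy of Fouvry--Michel \cite{fou-mic:chgmt-signe} for Kloosterman sums. The plan is to apply the combinatorial $\beta$-sieve of Rosser--Iwaniec, in dimension $\kappa=2$ and of level $D = X^{1/2}/\log^{12}X$ (permitted by Corollary~\ref{intro:cor:2}), to the non-negative sequence $\rond{A}^{\pm}$. The main term will coincide with that of Proposition~\ref{sieve:prop:1} because both sieves have the same local density; the additional cubic-sum contribution will be absorbed into the remainder via Corollary~\ref{intro:cor:2}.

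First I would verify the sieve axioms. Writing $\rond{A}^+ = \rond{B} + \rond{C}$, where $\rond{C} = (c_c)$ with $c_c = g(\rond{N}(c)/X)\, S(1,c)/\sqrt{\rond{N}(c)}$ when $\lambda \nmid c$ and zero otherwise, the local density of the sieve is governed entirely by $\rond{B}$. From the multiplicativity of $2^{\Omega(\pt)}$ one obtains $g(\pi) = 2/(\rond{N}(\pi)-1) + O(\rond{N}(\pi)^{-2})$, confirming dimension $\kappa=2$; this is exactly the source of the factor $u^2+2u$ already seen in Proposition~\ref{sieve:prop:1}. The $\beta$-sieve then furnishes weights $(\lambda_d^+)$, supported on squarefree $d$ coprime to $\lambda$ with $\rond{N}(d)\leq D$ and satisfying $\tq\lambda_d^+\tq \leq 1$, such that
\begin{equation*}
S(\rond{A}^+, X^{1/u}) \;\leq\; \sum_{d\leq D}\lambda_d^+ \tq \rond{B}_d\tq \;+\; \sum_{d\leq D}\lambda_d^+ R_d,
\end{equation*}
where $R_d = \sum_{c \equiv 0 \pmod{d},\,\lambda\nmid c} g(\rond{N}(c)/X)\, S(1,c)/\sqrt{\rond{N}(c)}$.

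The first sum is the $\beta$-sieve of $\rond{B}$ with its natural weights and so gives the same main term as in Proposition~\ref{sieve:prop:1}, multiplied by the dimension-two upper-sieve function $F$ at $s = \log D/\log z = u/2 + o(1)$; this yields
\begin{equation*}
c\,\hat{g}(1)\,\frac{X}{\log X}\,(u^2+2u)\bigl(1+h_2(u)+O_{g}(1/\log X)\bigr),
\end{equation*}
with $h_2(u) := F(u/2)-1$. The tabulated estimate $F(s)-1 \ll (s/\beta_0)^{-s/\beta_0}$ then implies $\tq h_2(u)\tq < 10^{-8}$ for all $u\geq 60$ by direct numerical check. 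For the second sum, one expands $\lambda_d^+$ via Abel summation over $\rond{N}(d)$, expressing it as a short combination of indicator functions of level sets $\{d\,:\,\rond{N}(d)\leq t\}$, $t\leq D$. Each such level sum is controlled by Corollary~\ref{intro:cor:2}, giving a total contribution
\begin{equation*}
\Bigl|\sum_{d\leq D}\lambda_d^+ R_d\Bigr| \ll \log X\cdot\bigl(D\sqrt{X} + X^{5/6}D^{1/3}\log^3 X\bigr) \ll \frac{X}{\log^2 X},
\end{equation*}
for our choice of $D$, proving the claimed error.

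The main obstacle is this last step, namely converting the signed bound of Corollary~\ref{intro:cor:2} into the weighted bound required by the sieve; in particular, the bottleneck $D\sqrt{X} \ll X/\log^2 X$ forces $D \leq X^{1/2}/\log^{12}X$, and this is precisely why $u$ must be taken as large as $60$ rather than the sharper value $23.9$ of \cite{fou-mic:chgmt-signe} (where one disposes of the full Linnik level of distribution). Since Corollary~\ref{intro:cor:2} already delivers a power saving in $X$, the logarithmic losses from the Abel-summation expansion are harmless, and the explicit constant $10^{-8}$ follows by reading off the tabulated values of $F$ at $s=30$.
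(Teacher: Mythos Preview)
Your overall strategy is close in spirit to the paper's, but there are two genuine gaps.

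\textbf{The remainder term.} Your use of Abel summation over $\rond{N}(d)$ to pass from the weighted sum $\sum_{d}\lambda_d^+ R_d$ to the level sums $\Sigma(t)$ of Corollary~\ref{intro:cor:2} does not work: the Rosser--Iwaniec weights $\lambda_d^+$ depend on the full factorization of $d$, not merely on $\rond{N}(d)$, so they cannot be written as a function of $\rond{N}(d)$ and summed by parts. Corollary~\ref{intro:cor:2} only controls the \emph{unweighted} signed sum $\sum_{\rond{N}(d)\leqslant D}R_d$, and there is no way to recover a weighted bound from it by partial summation alone. The paper handles this instead by Cauchy--Schwarz: the Selberg-type weights of Theorem~\ref{sieve:thm} produce a remainder $\sum_{\rond{N}(d)\leqslant D^2}3^{\Omega(d)}\tq r(d)\tq$, and one splits
\[
\sum_{\rond{N}(d)\leqslant D^2}3^{\Omega(d)}\tq r(d)\tq \;\leqslant\; \Bigl(\sum_{\rond{N}(d)\leqslant D^2}9^{\Omega(d)}\tq r(d)\tq\Bigr)^{1/2}\Bigl(\sum_{\rond{N}(d)\leqslant D^2}\tq r(d)\tq\Bigr)^{1/2}.
\]
The first factor is controlled trivially via the Weil bound \eqref{Kloo:eq:Weil} (giving $\ll X^{1/2}\log^{10}X$), and only the second, unweighted factor is bounded by $\Sigma(D^2)^{1/2}$ via Corollary~\ref{intro:cor:2}. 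This forces the choice $D=X^{1/4}\log^{-40}X$ (so that $D^2$ plays the role of your $D$), and is the actual mechanism by which the signed average of Corollary~\ref{intro:cor:2} is converted into a bound for the sieve remainder.

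\textbf{The sieve hypothesis.} You assume the standard axiom $\rond{A}^\pm_d=\rho(d)\rond{N}(d)^{-1}Y+r(d)$, but the local computation \eqref{sieve:eq:2}--\eqref{sieve:eq:6} shows that $\rond{B}_d$, hence $\rond{A}^\pm_d$, carries an additional secondary main term $-\rho(d)\rond{N}(d)^{-1}\log\rond{N}(d)\,Z$ with $Z\asymp X$. This is precisely why the paper invokes the ``crible \'etrange'' of Fouvry--Michel (Theorem~\ref{sieve:thm}) rather than the plain $\beta$-sieve: the Selberg-type optimisation is set up to absorb \emph{both} quadratic forms arising from $Y$ and $Z$ simultaneously, and this is what produces the correct coefficient $(u^2+2u)/\sigma_2(u/2)$ in \eqref{sieve:eq:14}. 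If you feed the $\log\rond{N}(d)$ term into the $\beta$-sieve remainder instead, it contributes roughly $X\log^2 D$, which is too large. You would need to show separately that $\sum_d\lambda_d^+\rho(d)\rond{N}(d)^{-1}\log\rond{N}(d)$ has the right size, and this is exactly the content of the second line in Theorem~\ref{sieve:thm} together with the estimate \eqref{sieve:eq:10} for $G_\pi(D/\rond{N}(\pi),z)$.
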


Assuming Proposition\,\ref{sieve:prop:2} for a moment, one sees that the estimate \eqref{intro:eq:upper} of the introduction is obtained by subtracting $S(\rond{B},X^{1/u})$ from $S(\rond{A}^\pm,X^{1/u})$. 

The rest of this section is devoted to the proof of Proposition\,\ref{sieve:prop:2}; this is done by sieving the sequence $\rond{A}^\pm$ and applying Corollary\,\ref{intro:cor:2}. 
As usual in sieve theory, one is interested in the quantity 

\begin{equation*}
\rond{A}^\pm_d= \sum_{c\congru 0\pmod d} a^\pm_c.
\end{equation*}
Since $\rond{A}^\pm$ is defined in terms of $\rond{B}$, we start with the local behavior of the sequence $\rond{B}$. Let us define $L(s)=\sum 2^{\Omega(c)} \rond{N}(c)^{-s}$, the sum being taken over the integers $c\in R$ coprime to $3$. This function has a pole at $s=1$. We will need the function 

\begin{equation*}
F_\lambda(s)=\prod_{\pi\neq \lambda} \left( 1 +\frac{1}{\rond{N}(\pi)^s (\rond{N}(\pi)^s-2)}\right),
\end{equation*}
for $\Re(s) >1/2$. By use of the Mellin transform and the residue theorem, one shows that for any $1/2>\varepsilon >0$ and $\sigma>1$,

\begin{align}
\sum_{\lambda \nmid c} b_c
&= \frac{1}{2i\pi} \int_{(\sigma)}\hat{g}(s) L(s) X^s \,ds\nonumber \\
&= X\log X \alpha^2 \frac{4}{9} \hat{g}(1) F_\lambda(1) +X d(g)+ \frac{1}{2i\pi} \int_{(1-\varepsilon)}\hat{g}(s) L(s) X^s \,ds\nonumber \\
&= X\log X \alpha^2 \frac{4}{9} \hat{g}(1) F_\lambda(1) +X d(g)+ \rond{O}\left(X^{1-\varepsilon}\right).\label{sieve:eq:1}
\end{align}
We recall that $\alpha$ is the residue at $s=1$ of the Dedekind zeta function $\zeta_k(s)$ of the field $k=\Q(\omega)$, thus $\alpha=\pi / 3\sqrt{3}$.  In \eqref{sieve:eq:1}, $d(g)$ is a quantity depending on the function $g$, that we shall not need to make precise.
Since $2^{\Omega(c)}$ is totally multiplicative, we deduce from \eqref{sieve:eq:1} that

\begin{align}
\rond{B}_d&= \sum_{\substack{c\congru 0\pmod d\\ \ec{gcd}(c,3)=1}} b_c 
=2^{\Omega(d)} \sum_{\lambda \nmid c} 2^{\Omega(c)} g\left( \frac{\rond{N}(c)}{ X/\rond{N}(d)}\right)\nonumber\\
& \begin{aligned}
&=\frac{2^{\Omega(d)}}{\rond{N}(d)} X \left( \log X \alpha^2 \frac{4}{9} \hat{g}(1) F_\lambda(1)- \log \rond{N}(d) \alpha^2 \frac{4}{9} \hat{g}(1) F_\lambda(1)+d(g)\right) \\
& \quad
+ \rond{O} \left( 2^{\Omega(d)} \left(\frac{X}{\rond{N}(d)}\right)^{1-\varepsilon}\right).
\end{aligned}\label{sieve:eq:2}
\end{align}
We define the real constants $Y$, $Z$ and $r(d)$, depending on $X$ and $g$, by

\begin{equation}\label{sieve:eq:3}
Y= X\log X\alpha^2 \frac{4}{9} \hat{g}(1) F_\lambda(1)+Xd(g), \quad 
Z= X\alpha^2 \frac{4}{9} \hat{g}(1) F_\lambda(1)
\end{equation}
and 

\begin{equation}\label{sieve:eq:4}
r(d) = \sum_{\substack{c\congru 0\pmod d \\ \lambda \nmid c}}
\frac{\pm S(1,c)}{\sqrt{\rond{N}(c)}} g\left(\frac{\rond{N}(c)}{X}\right)+ \rond{O} \left( 2^{\Omega(d)} \left(\frac{X}{\rond{N}(d)}\right)^{1-\varepsilon}\right).
\end{equation}
Let us also introduce the completely multiplicative function $\rho$, defining its value  at an Eisenstein prime $\pi\in R$ by

\begin{equation}\label{sieve:eq:5}
\rho(\pi)= \begin{cases} 2&\ec{if $\pi\neq \lambda$}\\0&\ec{otherwise}.\end{cases}
\end{equation}
With these notations, it follows from \eqref{sieve:eq:2} that $\rond{A}^\pm$ satisfies

\begin{equation}\label{sieve:eq:6}
\rond{A}^\pm_d = \frac{\rho(d)}{\rond{N}(d)} Y
- \frac{\rho(d)}{\rond{N}(d)} \log \rond{N}(d) Z + r(d).
\end{equation}  
This is an unusual sieve hypothesis, but can nevertheless be handled. We use the  {\it Crible \'etrange}, developed by Fouvry and Michel in \cite{fou-mic:chgmt-signe}. Some more notations are required: let $\rho^*(\pi)=\rond{N}(\pi)-\rho(\pi)$, and let $\rho^*$ be defined as a completely multiplicative function. Let $g$ be the function

\begin{equation*}
g(d) =\begin{cases}
\rho(d) \rho^*(d)^{-1} &\ec{if $d\in R$ is square-free}\\
0&\ec{otherwise}.\end{cases}
\end{equation*}
Let us define, for $z>0$, the sum

\begin{equation*}
G_c(T,z)= \sum_{\substack{\rond{N}(d)\leqslant T \\ d\tq \rond{P}(z) \\ \ec{gcd}(d,c)=1}} g(d),
\end{equation*}
where $\rond{P}(z)$ is the product over the primes of norm less than or equal to $z$. We write $G(T,z)$ for $G_1(T,z)$. We will also need the function $\sigma_2$ defined on $\R$ by

\begin{align}
\sigma_2(u) &= \frac{e^{-2\gamma}}{8} u^2, & 
& 0\leqslant u \leqslant 2 \label{sieve:eq:7}\\
\left(u^{-2} \sigma_2(u)\right)' &= -2u u^{-3} \sigma_2(u-2),&
2<u, \label{sieve:eq:8}
\end{align}
with $\sigma_2$ required to be continuous at $u=2$. It is a non-negative increasing function with $\lim_{u\to \infty}\sigma_2(u)=1$. 

\begin{thm}\label{sieve:thm} 
Let $\rond{A}$ be a sequence of positive numbers. Assume that there exist $Y$, $Z$ and $r(d)$ such that $\rond{A}$ satisfies \eqref{sieve:eq:6}, where $\rho$ is the totally multiplicative function defined in \eqref{sieve:eq:5}. Then, with the notations introduced above, the following inequality holds, for any $D\geqslant 1$:

\begin{equation*}
\begin{split}
S(\rond{A},z) \leqslant & Y G(D,z)^{-1} \\
&+ Z G(D,z)^{-2} \sum_{\pi\tq \rond{P}(z)} \frac{\rho(\pi) \rond{N}(\pi)}{\rho^*(\pi)^2} \log \rond{N}(\pi) G_\pi(D/\rond{N}(\pi), z)\\
&+ \sum_{\substack{d\tq \rond{P}(z) \\ \rond{N}(d) \leqslant D^2}} 3^{\Omega(d)} \tq r(d)\tq.
\end{split}
\end{equation*}
Moreover, if $z\leqslant D$, 

\begin{align*}
G_c(D,z) &=
 \alpha^2 e^{2\gamma}\prod_{\pi} \left(1-\frac{1}{\rond{N}(\pi)}\right)^2 \prod_{\pi \nmid c}  \left(1-\frac{\rho(\pi)}{\rond{N}(\pi)}\right)^{-1}\\
& \sigma_2(2\tau) \log^2 z \left( 1 +\rond{O}\left(\frac{\tau^5}{\log z}\right)\right),
\end{align*}
where $\tau= \log D / \log z$.
\end{thm}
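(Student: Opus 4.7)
The plan is to apply the Selberg $\Lambda^2$ upper bound sieve and to treat carefully the supplementary logarithmic term appearing in the sieve hypothesis \eqref{sieve:eq:6}. Let $(\lambda_d)$ be any real sequence supported on square-free divisors $d\tq \rond{P}(z)$ with $\rond{N}(d)\leqslant D$, normalized by $\lambda_1=1$. Then
\begin{equation*}
S(\rond{A},z)\leqslant \sum_{d_1,d_2}\lambda_{d_1}\lambda_{d_2}\rond{A}_{[d_1,d_2]} = Y\Sigma_1 - Z\Sigma_2 + R,
\end{equation*}
with $\Sigma_1 = \sum\lambda_{d_1}\lambda_{d_2}\rho([d_1,d_2])/\rond{N}([d_1,d_2])$, with $\Sigma_2$ the analogous quantity weighted by $\log\rond{N}([d_1,d_2])$, and with $R$ bounded using $\tq\lambda_d\tq\leqslant 1$ together with the fact that each square-free $d\tq\rond{P}(z)$ equals $[d_1,d_2]$ for exactly $3^{\Omega(d)}$ ordered pairs $(d_1,d_2)$; this produces directly the third term of the theorem.

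For the first term I would choose the classical Selberg optimal weights minimizing $\Sigma_1$ under $\lambda_1=1$. The standard diagonalization of the quadratic form associated with the multiplicative density $g(d)=\rho(d)/\rho^*(d)$ yields $\min\Sigma_1 = 1/G(D,z)$ and the explicit formula $\lambda_d = \mu(d) g(d) G_d(D/\rond{N}(d),z)/G(D,z)$. Inserting this into the inequality above produces the main term $YG(D,z)^{-1}$; the formula also entails the pointwise bound $\tq\lambda_d\tq\leqslant 1$ needed in the remainder estimate.

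The main obstacle is the evaluation of $\Sigma_2$ at this choice of weights. I expand $\log\rond{N}([d_1,d_2]) = \sum_{\pi\tq [d_1,d_2]}\log\rond{N}(\pi)$ and use the symmetric identity $\mathbf{1}[\pi\tq [d_1,d_2]] = \mathbf{1}[\pi\tq d_1] + \mathbf{1}[\pi\tq d_2] - \mathbf{1}[\pi\tq d_1]\mathbf{1}[\pi\tq d_2]$. For each prime $\pi\tq \rond{P}(z)$, substituting $d_1=\pi d_1'$ (and if necessary also $d_2=\pi d_2'$) on the divisible variables and exploiting the multiplicativity of $\mu g$ and of $\rho/\rond{N}$ factors out the local contribution at $\pi$, producing exactly the weight $\rho(\pi)\rond{N}(\pi)/\rho^*(\pi)^2$; the remaining double sum over $(d_1',d_2')$ coprime to $\pi$ with norms $\leqslant D/\rond{N}(\pi)$ is again a Selberg quadratic form, whose diagonalization evaluates to $G_\pi(D/\rond{N}(\pi),z)/G(D,z)^2$. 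Assembling these pieces and using the trivial estimate $-Z\Sigma_2\leqslant Z\tq\Sigma_2\tq$ yields the middle term of the theorem. The delicate point is the correct bookkeeping of the $\pi$-local contributions, since the Selberg weights $\lambda_d$ themselves depend non-trivially on whether $\pi$ divides $d$.

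For the asymptotic of $G_c(D,z)$ in the range $z\leqslant D$, I would argue as in Halberstam--Richert: the density bound $\sum_{\rond{N}(\pi)\leqslant u}\rho(\pi)\log\rond{N}(\pi)/\rond{N}(\pi) = 2\log u + O(1)$, consequence of Mertens's theorem in the field $k=\Q(\omega)$, shows that the sieve has dimension $\kappa=2$; the Buchstab-type recursion underlying the definition \eqref{sieve:eq:7}--\eqref{sieve:eq:8} of $\sigma_2$ then produces the factor $\sigma_2(2\tau)\log^2 z$, while the Euler product $\alpha^2 e^{2\gamma}\prod_\pi(1-1/\rond{N}(\pi))^2\prod_{\pi\nmid c}(1-\rho(\pi)/\rond{N}(\pi))^{-1}$ arises from partial summation and the residue of $\zeta_k$ at $s=1$. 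This last part is a routine computation that follows the treatment in \cite{fou-mic:chgmt-signe}.
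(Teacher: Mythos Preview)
Your proposal is correct and follows essentially the same route as the paper: apply Selberg's $\Lambda^2$ sieve so that the hypothesis \eqref{sieve:eq:6} produces two quadratic forms $Q_1=\Sigma_1$ and $Q_2=\Sigma_2$, minimize $Q_1$ with the standard Selberg weights, then insert these weights into $Q_2$; the asymptotic for $G_c(D,z)$ is obtained exactly as in Halberstam--Richert for a sieve of dimension $\kappa=2$. The only point where the paper (following \cite{fou-mic:chgmt-signe}) may differ in detail is the evaluation of $\Sigma_2$: rather than expanding $\log\rond{N}([d_1,d_2])$ directly as you do, it is somewhat cleaner to pass first to the diagonalizing variables $y_e$ and compute $Q_2$ there, which avoids the bookkeeping issue you flag concerning the dependence of $\lambda_{\pi d'}$ on $\pi$.
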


\begin{proof} The idea is to adapt the Selberg sieve, introducing a free parameter $\lambda$ of support $D$. Due to the hypothesis \eqref{sieve:eq:5} one obtains two quadratic forms $Q_1$ and $Q_2$ instead of one. By the usual method, one can minimize the first $Q_1$, and report the value of $\lambda$ in $Q_2$. For details, see \cite{fou-mic:chgmt-signe} and \cite{lou:these}. This proves the first part of Theorem\,\ref{sieve:thm}. The second part is a classical result in sieve theory. We followed the exposition made in \cite{hal-ric:sieve}: the first step is to estimate the integral 

\begin{equation*}
T_c(D)=\int_1^D G_c(T) \frac{dt}{t}.
\end{equation*}
We find (compare with \cite{hal-ric:sieve}, p. 149-151)

\begin{align*}
T_c(D) &= \frac{\alpha^2}{6} \prod_\pi \left(1-\frac{1}{\rond{N}(\pi)}\right)^2 \prod_{\pi\nmid c} \left(1-\frac{\rho(\pi)}{\rond{N}(\pi)}\right)^{-1} \log^3 D\\ 
&+\rond{O}\left(\log^2 D\right).
\end{align*}
This estimate is then used to obtain (see \cite{hal-ric:sieve}, p.199-201)

\begin{gather*}
G_c(D,z) =
\alpha^2 e^{2\gamma}\prod_{\pi} \left(1-\frac{1}{\rond{N}(\pi)}\right)^2 \prod_{\pi \nmid c} \left(1-\frac{\rho(\pi)}{\rond{N}(\pi)}\right)^{-1}\\
\quad \sigma_2(2\tau) \log^2 z \left( 1 +\rond{O}\left(\frac{\tau^5}{\log z}\right)\right).
\end{gather*}
\end{proof}

Now, it remains to apply Theorem\,\ref{sieve:thm} to the sequence $\rond{A}^\pm$. 
With our definition of $\rho$ and $F$, the estimation of $G_c(D,z)$ simplifies to 

\begin{align}
G_c(D,z)&= \alpha^2 e^{2\gamma}\prod_{\pi} \left(1-\frac{1}{\rond{N}(\pi)}\right)^2 \prod_{\substack{\pi \nmid c\\ \pi\neq \lambda}} \left(1-\frac{2}{\rond{N}(\pi)}\right)^{-1}\nonumber\\
& \sigma_2(2\tau) \log^2 z \left( 1 +\rond{O}\left(\frac{\tau^5}{\log z}\right)\right)\nonumber\\
&\begin{aligned}
&=\alpha^2  e^{2\gamma} \prod_{\pi\tq c\lambda} \left(1-\frac{1}{\rond{N}(\pi)}\right)^2 \prod_{\pi\nmid c\lambda}  \left(1-\frac{1}{\rond{N}(\pi)}\right)^2\left(1-\frac{2}{\rond{N}(\pi)}\right)^{-1} \\
& \sigma_2(2\tau) \log^2 z \left( 1 +\rond{O}\left(\frac{\tau^5}{\log z}\right)\right).
\end{aligned}
\label{sieve:eq:9}
\end{align}
In particular, for a prime $\pi\neq \lambda$ such that $\rond{N}(\pi)\leqslant D$, the estimation $\tau -\log \rond{N}(\pi)/\log z=\rond{O}(\tau)$ is valid, and we deduce from \eqref{sieve:eq:9} that

\begin{equation}\label{sieve:eq:10}
\begin{split}
G_\pi(D/\rond{N}(\pi),z)
&= \alpha^2 \frac{4}{9} e^{2\gamma} F_\lambda(1) \left(1-\frac{2}{\rond{N}(\pi)}\right)\\
& \sigma_2\left( 2\tau -2 \frac{\log \rond{N}(\pi)}{\log z}\right) \log^2 z \left( 1 +\rond{O}\left(\frac{\tau^5}{\log z}\right)\right).
\end{split}
\end{equation}
It follows also from \eqref{sieve:eq:9} that

\begin{equation}\label{sieve:eq:11}
G(D,z)=\alpha^2 \frac{4}{9} e^{2\gamma} F_\lambda(1) \sigma_2\left( 2\tau\right) \log^2 z \left( 1 +\rond{O}\left(\frac{\tau^5}{\log z}\right)\right).
\end{equation}
Now, combining the estimates \eqref{sieve:eq:10} and \eqref{sieve:eq:11}, we obtain from Theorem\,\ref{sieve:thm} that

\begin{equation}\label{sieve:eq:12}
\begin{split}
S(\rond{A}^\pm,z) &\leqslant Y G(D,z)^{-1} +
\frac{2Z}{G(D,z)} \left(1+\rond{O}\left(\frac{\tau^5}{\log z}\right)\right)\\
& \frac{1}{\sigma_2(2\tau)}\sum_{\substack{\pi\tq \rond{P}(z)\\ \pi\neq \lambda}} \frac{\log \rond{N}(\pi)}{\rond{N}(\pi)-2} \sigma_2\left(2\tau -2\frac{\log \rond{N}(\pi)}{\log z}\right) \\
& + \sum_{\substack{d\tq \rond{P}(z) \\ \rond{N}(d) \leqslant D^2}} 3^{\Omega(d)} \tq r(d)\tq.
\end{split}
\end{equation}
Then, by \eqref{sieve:eq:8} and a partial integration, one shows that 

\begin{equation*}
\frac{1}{\sigma_2(2\tau)}\sum_{\substack{\pi\tq \rond{P}(z)\\ \pi\neq \lambda}} \frac{\log \rond{N}(\pi)}{\rond{N}(\pi)-2} \sigma_2\left(2\tau -2\frac{\log \rond{N}(\pi)}{\log z}\right)=\log z+\rond{O}(1).
\end{equation*}
By inserting this inequality in \eqref{sieve:eq:12}, replacing $Y,Z$ by their values given in \eqref{sieve:eq:3} and using the estimation \eqref{sieve:eq:11} of $G(D,z)$, we obtain

\begin{equation}\label{sieve:eq:13}
\begin{split}
S(\rond{A}^\pm ,z) \leqslant &
 \hat{g}(1) e^{-2\gamma} \frac{X}{\sigma_2(2\tau)} \left( \frac{\log X}{\log^2 z}+ \frac{2}{\log z}\right)\\
& + \rond{O}\left( \frac{X\tau^5}{\log^2 z}\right) + \sum_{\substack{d\tq \rond{P}(z) \\ \rond{N}(d) \leqslant D^2}} 3^{\Omega(d)} \tq r(d)\tq.
\end{split}
\end{equation}
We can now choose $z$ and $D$ in terms of $u$ and $X$. Firstly, we set $z=X^{1/u}$. On the one side, the parameter $D$  has to be as large as possible, to optimize the decreasing function $h_2$ in Proposition\,\ref{sieve:prop:2}. On the other side, the last sum in \eqref{sieve:eq:13} has to behave as a remainder term. We set $D=X^{1/4}\log^{-40}X$. With this choice, one has $\tau= u/4 - 40 u \log\log X / \log X$, and therefore 

\begin{equation*}
\frac{1}{\sigma_2(2\tau)} = \frac{1}{\sigma_2(u/2)} \left(1+\rond{O}\left(\frac{1}{\log\log X}\right)\right).
\end{equation*}
Thus \eqref{sieve:eq:13} yields

\begin{equation}\label{sieve:eq:14}
\begin{split}
S(\rond{A}^\pm ,z) \leqslant &
\hat{g}(1) e^{-2\gamma} \frac{X}{\sigma_2(u/2) \log X} \left(u^2+ 2u\right)\\
& +  \rond{O}\left( \frac{X}{\log X \log\log X}\right) + \sum_{\substack{d\tq \rond{P}(z) \\ \rond{N}(d) \leqslant D^2}} 3^{\Omega(d)} \tq r(d)\tq.
\end{split}
\end{equation}
We need the following  property of $\sigma_2$:

\begin{equation*}
\frac{1}{\sigma_2(t)} =1 -h_2(t), \qquad \ec{with } h_2(t)=\rond{O}\left( \left(\frac{t}{2}\right)^{-t/2}\right), \qquad \ec{for } t >1.
\end{equation*}
This gives finally

\begin{equation}\label{sieve:eq:15}
\begin{split}
S(\rond{A}^\pm ,z) &\leqslant \hat{g}(1) e^{-2\gamma} \frac{X}{\log X} (u^2+2u) \left(1-h_2(u/2)\right) \\
& +  \rond{O}\left( \frac{X}{\log X \log\log X}\right) + \sum_{\substack{d\tq \rond{P}(z) \\ \rond{N}(d) \leqslant D^2}} 3^{\Omega(d)} \tq r(d)\tq.
\end{split}
\end{equation}
The last sum in \eqref{sieve:eq:15}, i.e. the contribution of the cubic sums, is evaluated by the Cauchy-Schwarz inequality and the Weil bound \eqref{Kloo:eq:Weil}. We obtain 

\begin{align*}
&\Bigg\vert \sum_{\rond{N}(d) \leqslant D^2}3^{\Omega(d)}\  r(d)\Bigg\vert  \leqslant \\
&\Bigg\vert \sum_{\substack{\lambda \nmid d \\ \rond{N}(d)\leqslant D^2}} 9^{\Omega(d)} \sum_{\substack{\lambda \nmid d\\ c\congru 0\pmod d}} \frac{\pm S(1,c)}{\sqrt{\rond{N}(c)}}g\left(\frac{\rond{N}(c)}{X}\right)\Bigg\vert^{1/2}\\
&\Bigg\vert \sum_{\substack{\lambda \nmid d \\ \rond{N}(d)\leqslant D^2}}  \sum_{\substack{\lambda \nmid d\\ c\congru 0\pmod d}} \frac{\pm S(1,c)}{\sqrt{\rond{N}(c)}}g\left(\frac{\rond{N}(c)}{X}\right)\Bigg\vert^{1/2}\\
& \leqslant X^{1/2} \log^{10}X \sqrt{ \Sigma(D^2)} +\rond{O}\left(X^{1/2-\varepsilon}\right).
\end{align*}
Our choice of $D=X^{1/4}\log^{-40}X$ yields, via Corollary\,\ref{intro:cor:2}, the desired bound for the error term. By inserting it in \eqref{sieve:eq:15}, we conclude the proof of the first statement of Proposition\,\ref{sieve:prop:2}. For the estimation of $h_2(u)$, we refer to \cite{siv:kloo}, Lemme\,3.4, where precise estimates are given. Thereby, we conclude the proof of \eqref{intro:eq:upper}, with the function $h(u)$ in \eqref{intro:eq:upper} given by $e^{-2\gamma} (u^2+2u)(\tq h_1(u)\tq +\tq h_2(u)\tq)$. In follows from the estimates on $h_1(u)$ and $h_2(u)$ given in Proposition\,\ref{sieve:prop:1} and Proposition\,\ref{sieve:prop:2} that, for $u=60$, one has $ h(u)< e^{-2\gamma} (u^2+2u) 10^{-8}$; in particular, a numerical computation shows that $h(60) <10^{-4}$.  

\section{Vertical Sato-Tate law}\label{sec:sato-tate}

In this section  we prove the lower bound \eqref{intro:eq:lower}. The idea is to show that when the moduli have a limited number of prime factors, then sums of absolute values of cubic sums are not too small. 

Let us start by considering Eisenstein integers $c$ of the form $c=\pi_1 \pi_2 \pi_3$, where the $\pi_i$'s are prime integers of $R$. Then, by the twisted multiplicativity \eqref{Kloo:eq:mult},

\begin{align*}
\frac{S(1,c)}{\sqrt{\rond{N}(c)}} &= 
\frac{S(\barre{\pi_2\pi_3},\pi_1)}{\sqrt{\rond{N}(\pi_1)}} \frac{S(\barre{\pi_1\pi_3},\pi_2)}{\sqrt{\rond{N}(\pi_2)}} \frac{S(\barre{\pi_1\pi_2},\pi_3)}{\sqrt{\rond{N}(\pi_3)}}\\
&= 8 \cos \theta_{\pi_1,\barre{\pi_2\pi_3}} \cos \theta_{\pi_2,\barre{\pi_1\pi_3}} \cos \theta_{\pi_3,\barre{\pi_1\pi_2}},
\end{align*}
where the angles $\theta_{\pi,m}$ are defined by 

\begin{equation*}
\sum_{x\pmod \pi} e\left(\frac{m(x^3-3x)}{\pi}\right) = 2 \cos \theta_{\pi,m} \rond{N}(\pi)^{1/2}, \quad \pi \nmid m.
\end{equation*}
In order to simplify the notations, let us write $\theta_1=\theta_{\pi_1,\barre{\pi_2\pi_3}}$ and define similarly $\theta_2$ and $\theta_3$. Let us choose some numbers $\mu_3^\pm, \mu_2^\pm$ such that 

\begin{align*}
&\mu_3^-<\mu_3^+<\mu_2^-<\mu_2+\\
& \mu_2^+ < 1-(\mu_2^++\mu_3^+) < 1- (\mu_2^-+\mu_2^-)<\mu_2^-+\mu_2^+.
\end{align*}
More precisely, we take $\mu_3^-=7/2$, $\mu_3^+=\mu_2^-=13/42$ and $\mu_2^+=1/3$. Define $P_3^-=X^{\mu_3^-}$, $P_2^-=X^{\mu_2^-}$ and

\begin{align*}
P_3^+=\max_{\lambda=1,2,\pts}\left\{2^\lambda X^{\mu_3^-} : 2^\lambda X^{\mu_3^-}<\frac{X^{\mu_3^+}}{2}\right\}\\
P_2^+=\max_{\lambda=1,2,\pts}\left\{2^\lambda X^{\mu_2^-} : 2^\lambda X^{\mu_2^-}<\frac{X^{\mu_2^+}}{2}\right\}.
\end{align*}

In order that the angles $\theta_i$, $i=1,2,3$, are well defined, we impose the conditions $X^{\mu_3^-}\leqslant \rond{N}(\pi_3)< X^{\mu_3^+}$ and $X^{\mu_2^-}\leqslant \rond{N}(\pi_2)< X^{\mu_2^+}$.
Moreover, let us choose a parameter $t\in[0,\pi/2]$ such that the interval $I=[0,t] \cup [\pi-t,\pi]$ of $[0,\pi]$ has measure $\mu_{ST}(I)=3/4$. 
Let $g$ be a smooth function with compact support included in $[1,2]$. Then, one has

\begin{align}
&\sum_{\pi\tq c \Rightarrow \rond{N}(\pi)>X^{1/u}} \left\vert \frac{S(1,c)}{\sqrt{\rond{N}(c)}}\right\vert g\left( \frac{\rond{N}(c)}{X}\right) \nonumber \\
&\geqslant \sum_{\substack{\rond{N}(\pi_3)> X^{1/u}\\P_3^-\le \rond{N} \pi_3< 2P_3^+}} \sum_{\substack{\rond{N}(\pi_2)> X^{1/u}\\P_2^-\le \rond{N} \pi_2< 2P_2^+}} 
\sum_{\substack{\rond{N}(\pi_1)>X^{1/u}\\ \theta_1, \theta_2, \theta_3\in I}} \left\vert \frac{S(1,\pi_1\pi_2\pi_3)}{\sqrt{\rond{N}(\pi_1\pi_2\pi_3)}}\right\vert g\left( \frac{\rond{N}(\pi_1\pi_2\pi_3)}{X}\right) \nonumber \\
&\geqslant 8 (\cos t)^3
\sum_{P_2^-\le \rond{N} \pi_2< 2P_2^+} \sum_{P_3^-\le \rond{N} \pi_3< 2P_3^+}  \sum_{\substack{\pi_1\\\theta_1,\theta_2,\theta_3\in I}} g\left(\frac{\rond{N}(\pi_1\pi_2\pi_3)}{X}\right).\label{sato-tate:eq:1}
\end{align}
Let us introduce the notations 

\begin{align*}
m(E_i) &= 
\sum_{P_2^-\le \rond{N} \pi_2< 2P_2^+} \sum_{P_3^-\le \rond{N} \pi_3< 2P_3^+}  \sum_{\substack{\pi_1\\\theta_i\in I}} g\left(\frac{\rond{N}(\pi_1\pi_2\pi_3)}{X}\right),\\
m(E) &= 
\sum_{P_2^-\le \rond{N} \pi_2< 2P_2^+} \sum_{P_3^-\le \rond{N} \pi_3< 2P_3^+}  \sum_{\pi_1} g\left(\frac{\rond{N}(\pi_1\pi_2\pi_3)}{X}\right).
\end{align*}
Then a simple inclusion-exclusion argument shows that

\begin{equation}\label{sato-tate:eq:2}
\sum_{P_2^-\le \rond{N} \pi_2< 2P_2^+} \sum_{P_3^-\le \rond{N} \pi_3< 2P_3^+}  \sum_{\substack{\pi_1\\\theta_1,\theta_2,\theta_3\in I}} g\left(\frac{\rond{N}(\pi_1\pi_2\pi_3)}{X}\right)\\
\geqslant \left( m(E_1) + m(E_2) +m(E_3) -2 m(E)\right).
\end{equation}

Now, the Sato-Tate vertical law states that the angles $\theta_{\pi,a}$ are equidistributed, when $a$ runs modulo $\pi$. Therefore each $m(E_i)$ should be independent of $i$, and depending only on the size of the interval $I$. This is actually true, and is expressed in the next proposition:

\begin{prop}\label{sato-tate:prop} With the above notations,

\begin{equation*}
m(E_i) =(\mu_{ST}(I)+o(1)) m(E) + \rond{O}\left(\frac{X}{\log^2 X}\right).
\end{equation*}
\end{prop}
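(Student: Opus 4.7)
The plan is to treat each $i\in\{1,2,3\}$ separately by ``pivoting'' on $\pi_i$. Fix $\pi_i$ and sum over the other two primes; since the angle $\theta_i=\theta_{\pi_i,\barre{\pi_j\pi_k}}$ depends on $\pi_j,\pi_k$ only through the residue class $\barre{\pi_j\pi_k}$ modulo $\pi_i$, one can write
\[
m(E_i)\;=\;\sum_{\pi_i}\sum_{a\in (R/\pi_i R)^\times}\mathbf{1}_{\{\theta_{\pi_i,a}\in I\}}\,N_{\pi_i,a},
\]
where $N_{\pi_i,a}$ is the $g$-weighted count of pairs $(\pi_j,\pi_k)$ of primes in the prescribed ranges with $\barre{\pi_j\pi_k}\congru a\pmod{\pi_i}$. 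The unconstrained version gives $m(E)=\sum_{\pi_i}\sum_a N_{\pi_i,a}$, so the proposition reduces to decoupling the two indices $a$ and $\pi_i$.

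Two independent inputs are needed. First, a quantitative refinement of Theorem\,\ref{intro:thm:vert}: for any fixed interval $I\subset[0,\pi]$,
\[
\#\{a\in (R/\pi R)^\times\,:\,\theta_{\pi,a}\in I\}\;=\;\mu_{ST}(I)(\rond{N}(\pi)-1)+\rond{O}_I\!\left(\rond{N}(\pi)^{1-c}\right)
\]
for some absolute $c>0$, which follows from the Deligne-Katz monodromy framework underlying Theorem\,\ref{intro:thm:vert} by smoothing $\mathbf{1}_I$ against its Fourier expansion on $SU(2)$ and applying the Weil bound on the resulting traces. Second, equidistribution of the products $\barre{\pi_j\pi_k}$ among invertible residues modulo $\pi_i$: writing $N_{\pi_i,a}=N_{\pi_i}/\phi(\pi_i)+E_{\pi_i,a}$, orthogonality of Dirichlet characters modulo $\pi_i$ reduces the control of the $E_{\pi_i,a}$ to power-saving bounds on non-trivial twisted sums $\sum_{\pi_j,\pi_k}\chi(\pi_j\pi_k)\,g(\rond{N}(\pi_1\pi_2\pi_3)/X)$, which factor essentially as a product of one-dimensional prime sums treatable by Siegel-Walfisz (when $\rond{N}(\pi_i)$ is much smaller than the range of the summed primes) or by a Burgess-type subconvex estimate over $R$ (when the moduli are comparable). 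Combining the two inputs and summing over $\pi_i$ yields
\[
m(E_i)\;=\;\mu_{ST}(I)\,m(E)\,(1+o(1))\;+\;\rond{O}\!\left(\frac{X}{\log^2 X}\right),
\]
as required.

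The main difficulty lies in the case $i=1$, where $\pi_1$ is the largest of the three primes while the individual primes $\pi_2,\pi_3$ are of strictly smaller norm. Then each residue class modulo $\pi_1$ contains at most one prime $\pi_j$, so a one-variable application of PNT in arithmetic progressions is of no use; the equidistribution must instead be extracted from the fact that the pairs $(\pi_j,\pi_k)$ collectively outnumber the $\rond{N}(\pi_1)$ residue classes, so that the products $\pi_j\pi_k$ do spread out. Pushing a genuinely two-variable Burgess-type character-sum estimate through the Fourier inversion, with uniformity in $\pi_1$ sufficient to keep the final remainder at size $O(X/\log^2 X)$, is the main technical hurdle of the argument.
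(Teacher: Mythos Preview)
Your overall strategy---pivot on $\pi_i$, group the remaining two primes by the residue class of $\overline{\pi_j\pi_k}$ modulo $\pi_i$, and combine a quantitative form of the vertical Sato--Tate law with equidistribution of these residues---is exactly the paper's approach, and your first input (the power-saving version of Theorem~\ref{intro:thm:vert}) matches the paper's \eqref{sato-tate:prop:eq1}. The divergence is in the second input. You propose to control the errors $E_{\pi_i,a}$ pointwise via character sums, invoking Siegel--Walfisz when the modulus is small and a Burgess-type bound otherwise. But Siegel--Walfisz is never available here: every modulus $\rond{N}(\pi_i)$ is at least $X^{1/6}$, far beyond the $(\log X)^A$ range where Siegel--Walfisz applies. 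And for $i=1$ you correctly identify the obstruction---each single variable $\pi_2$ or $\pi_3$ lives in a range that may be shorter than $\rond{N}(\pi_1)^{1/2}$---but then leave the required ``two-variable Burgess-type'' estimate as an unresolved hurdle; such a statement for sums supported on primes is not standard and would itself be substantial work.

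The paper sidesteps all of this by replacing pointwise character-sum bounds with a mean-square estimate: the multiplicative large sieve over $k=\Q(\omega)$ gives, for any arithmetic function $f$,
\[
\sum_{P<\rond{N}(\pi)\le 2P}\ \sum_{0\neq a\!\!\!\pmod{\pi}}\Bigl|\,\sum_{\substack{\rond{N}(n)\le N\\ n\equiv a\,(\pi)}}f(n)\,g\!\left(\tfrac{\rond{N}(\pi n)}{Y}\right)-\tfrac{1}{\rond{N}(\pi)-1}\sum_{(n,\pi)=1}f(n)\,g\!\left(\tfrac{\rond{N}(\pi n)}{Y}\right)\Bigr|^{2}\ \ll\ \Bigl(\tfrac{N}{P}+P\Bigr)\sum_{\rond{N}(n)\le N}|f(n)|^{2},
\]
applied with $\pi=\pi_i$ and $f$ the indicator of $n=\pi_j\pi_k$ in the prescribed dyadic boxes. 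Since only an $\ell^2$-control of the $E_{\pi_i,a}$ is needed (one pairs it by Cauchy--Schwarz against the bounded Sato--Tate weights), this bound is exactly the right shape; the carefully chosen exponents $\mu_j^\pm$ guarantee that $N/P+P$ loses at most a small power against the trivial count, uniformly in all three cases $i=1,2,3$. Thus the ``main technical hurdle'' you flag simply does not arise: the large sieve is insensitive to whether $f$ is supported on primes and requires no case distinction on the relative sizes of $P$ and $N$. Replacing your second input by this large-sieve inequality (cf.\ \cite[Lemme~2.5]{fou-mic:sommes-mod} and \cite[Theorem~4]{hux}) completes the argument cleanly.
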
 

\begin{proof} 
The analog statement has been established for Kloosterman sums in \cite[Lemme\,5.1]{fou-mic:chgmt-signe}. The proof is based on two main results. First, one shows, as in \cite[Lemme\,2.3]{fou-mic:sommes-mod} that the vertical Sato-Tate law (Theorem\,\ref{intro:thm:vert}) implies that 

\begin{equation}\label{sato-tate:prop:eq1}
\frac{1}{\rond{N}\pi -1} \sum_{\substack{0\neq a\pmod \pi\\ \theta_{\pi,\barre{a}}\in I }} 1 = \mu_{ST}(I) + \rond{O}((\rond{N}\pi)^{-1/8}).
\end{equation}
Secondly, one uses the following large sieve inequality

\begin{equation}\label{sato-tate:prop:eq2}
\begin{split}
\sum_{P<\rond{N}\pi\le 2P} \ \sum_{0\neq a\pmod \pi} \Bigg\vert \sum_{\substack{\rond{N} n\le X\\ n\congru a \pmod \pi}} f(n) g\left(\frac{\rond{N} (\pi n)}{Y}\right) - \frac{1}{\rond{N}\pi -1} \sum_{(n,\pi)=1} f(n) f\left(\frac{\rond{N} (np)}{Y}\right)\Bigg\vert^2 \\
\ll \left( \frac{X}{P}+P\right) \left( \sum_{\rond{N} n\le X} \tq f(n)\tq^2\right),
\end{split}
\end{equation}
for any arithmetic function $f$. The proof of \eqref{sato-tate:prop:eq2} follows readily \cite[Lemme\,2.5]{fou-mic:sommes-mod}, using the orthogonality relation and the large sieve inequality for multiplicative characters over a number field (see for example \cite[Theorem\,4]{hux}). Proposition\,\ref{sato-tate:prop} then follows from \eqref{sato-tate:prop:eq1} and \eqref{sato-tate:prop:eq2}, as in \cite[Proposition\,4.1]{fou-mic:sommes-mod}.
\end{proof}

Using \eqref{sato-tate:eq:1}, \eqref{sato-tate:eq:2} and Proposition\,\ref{sato-tate:prop}, we get the lower bound

\begin{align*}
&\sum_{\pi\tq c \Rightarrow \rond{N}(\pi)>X^{1/u}} \left\vert \frac{S(1,c)}{\sqrt{\rond{N}(c)}} g\left( \frac{\rond{N}(c)}{X}\right) \right\vert \nonumber \\
&\geqslant 8 \cos^3 t (3\mu_{ST}(I)-2) \mu(E)+ \rond{O}\left( \frac{X}{\log^2 X}\right).
\end{align*}
With our choice of the interval $I$, we have $\mu_{ST}(I)=3/4$ and $\cos^3 t> 0.0075$. Finally, the prime number theorem gives

\begin{equation*}
\sum_{\pi\tq c \Rightarrow \rond{N}(\pi)>X^{1/u}} \left\vert \frac{S(1,c)}{\sqrt{\rond{N}(c)}} g\left( \frac{\rond{N}(c)}{X}\right) \right\vert 
\geqslant \frac{3}{2} 10^{-2} \hat{g}(1) \frac{X}{\log X}.
\end{equation*}
This proves the inequality \eqref{intro:eq:lower} stated in the introduction.


\end{document}